\numberwithin{equation}{section}
\newtheorem{thm}{Theorem}[section]
\newtheorem{theorem}[thm]{Theorem}
\newtheorem{claim}[thm]{Claim}
\newtheorem{lemma}[thm]{Lemma}
\newtheorem{corollary}[thm]{Corollary}
\newtheorem{proposition}[thm]{Proposition}
\newcommand\dell{t}
\newcommand{\Spec}{\operatorname{Spec}}
\newcommand{\Jac}{\operatorname{Jac}}
\newcommand{\Pic}{\mbox{\rm Pic}\kern 1pt}
\newcommand{\Br}{\mbox{\rm Br}\kern 1pt}
\newcommand{\cO}{{\mathcal O}}
\newcommand{\Res}{\mathrm{Res}}
\newcommand{\Coker}{\operatorname{Coker}\kern 1pt}
\newcommand{\A}{\mathbb A}
\numberwithin{equation}{section}
\numberwithin{figure}{section}
\numberwithin{table}{section}
    \let\c@equation\c@thm
    \let\c@figure\c@thm
    \let\c@table\c@thm
\begin{document}

\date{\today}
\title{New Points on Curves} 

\author{Qing Liu and Dino Lorenzini}
\address{School of Mathematical Sciences,
Xiamen University, 361005 Xiamen, China} 
\address{Universit\'e de Bordeaux, Institut de Math\'ematiques de 
Bordeaux, 33405 Talence, France} 
\email{Qing.Liu@math.u-bordeaux.fr}
\address{Department of mathematics, University of Georgia, 
Athens, GA 30602, USA} 
\email{lorenzin@uga.edu}

\thanks{Liu is partially supported by 
 the National Nature Science Foundation of China (Grant No. 11571286). Lorenzini is partially supported by NSA Grant H98230-15-1-0028 and by a grant from the Simons Foundation (Grant 245522, DL)}

\begin{abstract}
 Let $K$  be a  field and let $L/K$ be a finite extension. 
Let $X/K$ be a scheme of finite type. A point of $X(L)$ is said to be {\it new} if it does not belong to $\cup_F X(F)$, where $F$ runs over all proper subfields $K \subseteq F \subset L$. Fix now an integer $g>0$ and a  finite separable extension $L/K$ of degree $d$. We investigate in this article whether there exists a smooth proper geometrically connected curve of genus $g$ with a new point in $X(L)$. We show for instance that if $K$ is infinite with ${\rm char}(K)\neq 2$ and $g \geq \lfloor d/4\rfloor$, then there exist infinitely many 
hyperelliptic curves $X/K$ of genus $g$, pairwise non-isomorphic over $\overline{K}$, and  with a new point in $X(L)$. When $1 \leq d \leq 10$, we show that there exist infinitely many elliptic curves $X/K$ with pairwise distinct $j$-invariants and with a new point in $X(L)$.

\vspace*{.3cm}
\noindent 
\begin{tiny}KEYWORDS \end{tiny} New point, smooth curve, elliptic curve, large degree. 

\noindent 
\begin{tiny}MATHEMATICS SUBJECT CLASSIFICATION: 11G05, 11G20, 11G30, 14G05, 14G25  \end{tiny}
\vspace*{.3cm}
\end{abstract} 
 
\maketitle

\begin{section}{Introduction } 

Let $K$  be a  field, and let $X/K$ be a scheme of finite type.
 Let $L/K$ be a finite extension. 
A point of $X(L)$ is said to be {\it new} if it does not belong to $\cup_F X(F)$, where $F$ runs 
 over all proper subfields $K \subseteq F \subset L$. In other words, $X/K$ has a {\it new point in $X(L)$}, or a {\it new point over $L$}, if there exists in $X$ a closed point $P$ whose residue field $K(P)$ is isomorphic over $K$ to $L$. With this definition, any point in $X(K)$ is new.

Fix a positive integer $g \geq 1$ and consider the following question. Given a finite extension $L/K$, is it possible to find a smooth proper
geometrically connected curve $X/K$ of genus $g$ such that $X$ has a
new point over $L$? As we shall see, the answer to this question
depends in an essential way on the properties of the ground field
$K$. It is not hard to show that this question always has a  positive answer when $K$ is finite (\ref{thm.finite}), 
and when $K$ is large (\ref{thm.PAC}). The latter hypothesis holds for instance when $K$ is the field of fractions of a Henselian discrete valuation ring, 
or when  $K$ is a pseudo-algebraically closed field.

We have not been able to answer the above question for all $g>0$ when $K$ is a number field, and it may be that in this case the question has a negative answer in general. In this article, we present some instances where we can prove the existence of infinitely many curves of genus $g>0$ with a new point over $L$. 
  Our result
below is valid for any infinite field $K$ of characteristic not equal to $2$.
The case where ${\rm char}(K)=2$ is treated in Section \ref{char2}.  Recall the notation 
$\lfloor x \rfloor$ for the floor of a rational number $x$.

\medskip
\noindent
{\bf Theorem (see \ref{pro.boundgeneral} and \ref{thm.bound}).} {\it Let $K$ be any
infinite field with ${\rm char}(K) \neq 2$. 
Let $L_1, \dots, L_t$ be finite separable extensions of $K$  
(the extensions $L_i$ need not be distinct). Let  
$d:=\sum_{i=1}^t [L_i:K] $ and assume that $d \geq 7$. 
Then for each $g \geq \lfloor d/4\rfloor$, there exist infinitely many 
hyperelliptic curves $X/K$ of genus $g$, pairwise non-isomorphic over
$\overline{K}$, and with distinct new points $P_1\in X(L_1), \dots, P_t\in
X(L_t)$ on $X$. }

\medskip
The statement of Theorem  \ref{pro.boundgeneral} is slightly more precise, and it is further refined in \ref{pro.bound} when the extensions $L_i/K$ 
contain a Kummer subextension. In the case of $g=1$, we obtain the following result.

\medskip
\noindent
{\bf Theorem (see \ref{cor.elliptic}).} {\it Let $K$ be an infinite field with ${\rm char}(K) \neq 2$. 
Let $L/K$ be a separable extension of degree $d $, with $ 1 \leq d \leq 10$. Then there exist infinitely many elliptic curves $X/K$  with distinct $j$-invariants and with a new point over $L$.

Fix an integer $N>1$. Then it is possible to find infinitely many such elliptic curves with a new point which
is either of order bigger than $N$  or of infinite order in $X(L)$.
When $K$ is a global field,
there exist infinitely many elliptic curves $X/K$ with distinct $j$-invariants and with a new point over $L$  of infinite order in $X(L)$. 
}
\medskip

When $K$ is a number field and   $d\leq 9 $, the existence of an elliptic curve with a new $L$-point of infinite order is proved by Rohrlich in \cite{Roh}, Theorem, by a different method.
A variation on Rohrlich's proof is given in \cite{Mat}, Corollary 5 (see also \ref{thm.d=9} in all characteristics).  
We discuss  in \ref{case ell=11} the case where $d =11$ and $L/{\mathbb Q}$ is a Kummer extension.
Abelian extensions of degree $12, 14, 15$, $20$, $21$, and $30$, 
are discussed in \ref{cor.extension15-16}.

Let $K$ be a number field and fix an integer $g \geq 2$. Theorem 1.2 of Caporaso, Harris, and Mazur, in \cite{CHM}, implies the existence of the  following integer $d(g)$ if the Strong Lang Conjecture holds (see \ref{CHM}):
$d(g)$ is the smallest integer
such that, 
given any finite extension $L/K$ with $[L:K]>d(g)$, 
then there exist only {\it finitely many} smooth proper geometrically connected curves $X/K$ of genus $g$ with 
a closed point $P$ having residue field isomorphic to $L$.
We show in \ref{cor.infinitetelymanycurves} that $d(g) > 6(g+1)$.
When $[L:K]>6(g+1)$, we do not know of any efficient general method to produce an example of a curve $X/K$ of genus $g\geq 2$
with a new point over $L$. 

The situation when $K$ is a finite separable extension of ${\mathbb F}_p(t)$ is completely different, and we show in \ref{curves}
that the analogue of the integer $d(g)$ for extensions $L$ of such $K$ does not exist when $p \neq 2$. 
We also discuss some evidence in \ref{evidence}, based on the 
Parity Conjecture, that the analogue of the integer $d(g)$ when $g=1$ might not exist.

Given a large degree extension $L$ of $K$, our inability to produce curves $X/K$ of small genus with new points over $L$ leads us to 
conjecture the following.

\begin{conjecture}
Given any $g \geq 2$, there exist a field $K$ and a separable extension $L/K$  such that 
no smooth projective geometrically connected curve $X/K$ of genus $g$ has a new point over $L$.
\end{conjecture}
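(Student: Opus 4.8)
This statement is a conjecture; here is the line of attack we have in mind. Fix $g\ge 2$. The first step, which is unconditional, is a reduction. Any smooth projective geometrically connected curve $X/K$ of genus $g\ge 2$ admits, over $K$, a finite morphism $\pi\colon X\to\mathbb P^1_K$ of degree at most $2g-2$: compose the canonical map $X\to\mathbb P^{g-1}_K$ with a generic $K$-rational linear projection to $\mathbb P^1$ (for $K$ infinite, a projection centre disjoint from the canonical image exists). Suppose $X$ has a new point over $L$, where $d:=[L:K]$ is a prime number with $d>2g-2$. Since a prime-degree extension has no proper intermediate field, a new point over $L$ is just a closed point $P$ of $X$ with $K(P)\cong L$. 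Its image $Q=\pi(P)$ satisfies $K(Q)\subseteq K(P)=L$ and $[L:K(Q)]\le\deg\pi\le 2g-2<d$, hence $K(Q)\cong L$; therefore the fibre $X_Q=X\times_{\mathbb P^1_K}Q$, finite of degree at most $2g-2$ over $\Spec L$, has an $L$-rational point. Thus a new point over such an $L$ forces the existence of a closed point $Q$ of $\mathbb P^1_K$ of degree $d$, with residue field $\cong L$, at which $X$ --- seen via $\pi$ as a cover of $\mathbb P^1$ of degree at most $2g-2$ --- acquires a section.

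The second step is to choose $K$ and $L$ so that this cannot happen. Fix a prime $d$ that is very large relative to $g$ (in particular beyond the range $g\ge\lfloor d/4\rfloor$ of our positive results, and conjecturally $d>d(g)$, so that by \ref{CHM} only finitely many curves are in play), and look for a single degree-$d$ extension $L/K$ over a well-understood base $K$ (say $K=\mathbb Q$, or $k(t)$) that avoids every configuration of Step~1. Concretely, one stratifies $M_g$ together with its covers of $\mathbb P^1$ of bounded degree into finitely many bounded families; on each stratum the condition ``$X_Q$ has an $L$-rational point'' becomes a polynomial system over $L$ in boundedly many variables and of bounded degree (the bounds depending on $g$ only), of the rough shape ``a fixed polynomial of degree $O(g)$ becomes a square, or a value of a fixed norm form, when evaluated at a primitive element $\theta$ of $L/K$.'' Such a condition only involves the $K$-subspace $K+K\theta+\cdots+K\theta^{m}$ of $L$ with $m=O(g)$, which for $d\gg m$ has negligible dimension inside $L$; one then expects that a sufficiently generic degree-$d$ extension $L/K$ satisfies none of the finitely many solvability conditions, and hopes to produce it by a counting or Galois-theoretic genericity argument.

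The whole difficulty lies in this second step, for two reasons. First, the argument must be genuinely uniform over $M_g$: treating hyperelliptic curves is not enough, one has to control the low-degree pencils on every genus-$g$ curve, including the generic non-hyperelliptic ones, in family, so that Step~1 yields a finite, explicit list of polynomial systems. Second, one must pass from ``only finitely many curves'' --- which is all that \ref{CHM} provides, and only under the Strong Lang Conjecture --- to ``no curves at all''; this passage is not formal and requires either an unconditional Diophantine non-existence statement for the bounded systems above, or a delicate genericity argument for $L$. A natural intermediate target, which already implies the conjecture via Step~1, is: for each $g$ there is an integer $D(g)$ such that not every degree-$D(g)$ extension of $\mathbb Q$ is the residue field of a closed point on a genus-$g$ curve over $\mathbb Q$. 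We do not know how to prove this, which is why the statement is recorded here only as a conjecture.
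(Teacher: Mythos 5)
This statement is a \emph{conjecture} in the paper; no proof is given there, so there is no argument of the authors to compare yours against, and your submission, as you yourself say, is not a proof either. Your Step~1 is correct as far as it goes: any candidate $K$ must be infinite and not large (by \ref{thm.finite} and \ref{thm.PAC}), and over an infinite $K$ the canonical system does give a finite $K$-morphism $\pi:X\to\mathbb{P}^1_K$ of degree at most $2g-2$; for $d=[L:K]$ prime with $d>2g-2$, a new point $P$ over $L$ then maps to a closed point $Q$ of $\mathbb{P}^1_K$ with $K(Q)\cong L$ whose fibre, of degree at most $2g-2$ over $L$, acquires an $L$-rational point. This is in the spirit of the gonality statement \ref{gonality}, but note that Frey's theorem does not apply in your regime (there $d$ must exceed half the gonality, here $d$ is much larger than the gonality bound), and in any case it yields finiteness of the relevant extensions, never emptiness.

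The genuine gap is the whole of Step~2, and several of the heuristics you lean on there are shakier than the write-up suggests. First, \ref{CHM} is conditional on the Strong Lang Conjecture and, even granted, gives only \emph{finiteness} of curves for each fixed $L$, with the finite exceptional set depending on $L$; the paper stresses that not a single unconditional example is known of an infinite $K$ and an $L$ for which only finitely many genus-$g$ curves have a new point over $L$, so the passage from ``finitely many'' to ``none'' has no mechanism at all. Second, the ``small subspace $K+K\theta+\cdots+K\theta^{m}$'' argument is not sound: whether $f(\theta)$ is a square in $L$, or a value of a norm form, is governed by the multiplicative arithmetic of the whole field $L$, not by the $K$-linear span in which $f(\theta)$ happens to lie; moreover $Q$ ranges over \emph{all} degree-$d$ closed points of $\mathbb{P}^1_K$ with residue field $L$, i.e.\ $\theta$ ranges over essentially all generators of $L/K$, so there is no fixed small subspace to exploit. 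Third, the reduction to ``finitely many bounded polynomial systems'' presupposes a uniform control of all degree $\le 2g-2$ pencils on all genus-$g$ curves in family, which is asserted rather than established. Finally, the suggestion $K=k(t)$ with $k$ finite sits uneasily with \ref{curves}, which in odd characteristic produces genus-$g$ curves with new points over Kummer extensions of arbitrarily large degree, and with the failure of any analogue of $d(g)$ there. So what you have is a correct elementary reduction plus an honest research plan; the conjecture remains exactly as open as it is in the paper.
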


We do not know of any finite separable extension $L/K$ of an infinite field $K$ such that
for a given $g \geq 1$,  
only finitely many smooth projective geometrically connected curves $X/K$ of genus $g$ have a new point over $L$.

In Section \ref{ex.cyc}, we consider some standard families of extensions $L/{\mathbb Q}$, such as the cyclotomic extensions ${\mathbb Q}(\zeta_{n})/{\mathbb Q}$,
and  present some examples of the smallest known positive integer $g$ for which there exists a curve $X/{\mathbb Q}$ of genus $g$ with a new point over $L$.
For instance, when $L={\mathbb Q}(\zeta_{31})$, Theorems \ref{pro.boundgeneral} and \ref{pro.bound} imply the existence of infinitely 
many curves $X/{\mathbb Q}$ of genus $g=6$ with a new point over $L$. In \ref{ex.special}, we produce an example of such a curve with $g=5$, and 
we indicate in \ref{rem.ellipticBSD} that the elliptic curve $50a1$ might have a new point over $L$. We do not know of any examples of curves $X/{\mathbb Q}$
with $g=2,3,4$ and with a new point over ${\mathbb Q}(\zeta_{31})$. 

For elliptic curves, the Birch and Swinnerton-Dyer conjecture
provides some computational evidence (see \ref{evidence}  and \ref{rem.ellipticBSD}) that   supports a positive answer to the following question: Given an extension $L/{\mathbb Q}$, does there always exist an elliptic curve $E/{\mathbb Q}$ such that $E$ has a new point over $L$?

\begin{convention}
\label{conv}  In this article, we call \emph{hyperelliptic curve over a field
$K$} any smooth projective geometrically connected curve   $X/K$ of 
positive genus $g \geq 1$, admitting a $K$-morphism of degree $2$ to $\mathbb P^1_K$. 
When $K$ has characteristic different from $2$, the function field $K(X)$ is thus $K$-isomorphic to $K(x)[y]/(y^2-f(x))$
for some $f(x)\in K[x]$ separable of degree at least $ 3$. 
Contrary to the usual convention in the literature, ours 
allows for a hyperelliptic curve to have genus $1$. This will simplify the statement of some of our theorems. 
\end{convention}

We thank Zev Klagsbrun and the referee for useful comments. Computations in this article were done using Magma \cite{Magma} and Sage \cite{Sage}.
\end{section}

 \begin{section}{Fixing the extension} \label{Fix}
 
Let $K$ be any field. In this section, we fix a finite extension
$L/K$, and ask whether there exist infinitely many smooth proper geometrically connected curves $X/K$ of a
given genus with a new point over $L$. 

When such a curve $X/K$ has a
new point $P$ over $L=K(P)$, 
the finite extension $L$ must be {\it simple} over $K$, that is, there
must exist $\beta \in L$ such that $L=K(\beta)$. This is clear if $L/K$ is separable, and we will treat in this article mostly this case.
Assume for the remainder of this paragraph that char$(K)=p>0$.
The vector space $\Omega_{K(P)/K}$ is a quotient of
$\Omega_{X/K}\otimes K(P)$ and, hence, when $P$ is smooth, has
dimension at most $1$ over $K(P)$.  The extension $L/K$ is simple
since  
$\dim_{K(P)} \Omega_{K(P)/K}$ is equal to the $p$-degree  $p$-$\deg(K(P)/K)$ 
(\cite{K}, 5.7, b)), and  when $p$-$\deg(L/K)> {\rm Trdeg}(L/K)$, the extension $L/K$ can be generated by $p$-$\deg(L/K)$ elements (\cite{K}, 5.11, b)).
When $\Omega_{L/K}=(0)$, the extension $L/K$ is separable.

From now on, unless specified otherwise, we  assume $L/K$ separable. 
It is clear that ${\mathbb P}^1_K$ has new points over any simple finite extension $L/K$. Thus in this article we will only be concerned with
 new $L$-points on curves of positive genus.
 Note that it was recently proved in \cite{MR}, 1.9, that if a smooth projective curve $X/K$ over a number field $K$  has a new point over all but finitely many finite extensions $L/K$, 
then $X$ is isomorphic over $K$ to ${\mathbb P}^1_K$. We assume in this section that ${\rm char}(K) \neq 2$, and we 
treat the case where ${\rm char}(K) = 2$ in  Section \ref{char2}.

Let $X/K$ be a hyperelliptic curve of genus $g$, and denote by $\pi: X \to {\mathbb
  P}_K^1$ a morphism of degree $2$. 
 We  say that $P \in X$ is a \emph{special point} if $\pi^*:
 K(\pi(P))\to K(P)$ is an isomorphism. If $X$ is given by an  
 equation $y^2=\ell(x)$ with $\ell(x)$ separable of degree $\ge 3$,
 and the closed point $P \in X$ corresponds to the orbit of a solution
 $(a,b)\in  \overline{K}^2$ of the equation $y^2=\ell(x)$, then $P$ is 
special if and only if $K(a)=K(a,b)$. 
If $g=1$, this definition depends 
on the choice of a double cover $\pi: X\to \mathbb P^1_K$. 
Note that $K$-rational points, and points on the ramification locus of $\pi$, are special. 
Let us now state the main theorem of this section.

\begin{theorem} \label{pro.boundgeneral} Let $K$ be any infinite field with ${\rm char}(K) \neq 2$. For each $i=1,\dots, t$, let $L_i/K$ be a  separable extension of degree $d_i$ (the extensions $L_i$ need not be distinct). Let $d:=\sum_{i=1}^t d_i $. Assume $d \geq 7$. Fix an integer $N>1$.
Then there exist infinitely many 
hyperelliptic curves $X/K$ 
(see Convention {\rm \ref{conv}}), pairwise non-isomorphic over $\overline{K}$, such that the following are true.
\begin{enumerate}[\rm (a)]
\item The curve $X$ contains distinct special new points $P_1\in X(L_1),
  \dots, P_t\in X(L_t)$. More precisely, let $Q_i$ denote the image of the point $P_i: \Spec L_i \to X$.
  Then $Q_1,\dots, Q_t$ are $t$ distinct closed special points of $X$.
\item Let $F$ be any finite extension of $K$ with $K$-linear field embeddings $L_i \hookrightarrow F$ for $i=1,\dots,t$. 
Then there exists a $K$-morphism $\varphi: X\to \Jac(X)$ of degree at most $4^g$ such that 
the image of at least one of the $P_i$'s in $\Jac(X)(F)$ 
has either order larger than $N$ or is of infinite order.
\item Write $d=4q+j$ for some $q \geq 1$ and $0\le j\le 3$. Then 
$X$ has genus  $  q-1 >0$ if $j\neq 3$, and  genus $  q$  if  $j=3$.
\item When $d=4q$ or $d$ is odd, then $X/K$ has at least one additional $K$-rational point distinct from $Q_1,\dots, Q_t$. 
When  $d=4q+3 $, $X/K$ has at least two additional $K$-rational points distinct from $Q_1,\dots, Q_t$.
\end{enumerate}
\end{theorem}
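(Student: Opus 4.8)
\medskip
\noindent\emph{Proof sketch.} The construction rests on the remark that if $m(x)\in K[x]$ vanishes at $\alpha\in\ol K$ and $m(x)\mid f(x)-a(x)^{2}$ for some $a(x)\in K[x]$, then $f(\alpha)=a(\alpha)^{2}$ is a square in $K(\alpha)$, so $(\alpha,a(\alpha))$ is a special point of $y^{2}=f(x)$ with residue field $K(\alpha)$. For each $i$ choose a primitive element $\alpha_{i}$ of $L_{i}/K$ (possible since $L_{i}/K$ is separable) so that the monic minimal polynomials $m_{i}(x)$ --- separable of degree $d_{i}$ --- are pairwise distinct; this is possible because $K$ is infinite, so each $L_{i}$ has infinitely many primitive elements, falling into conjugacy classes of size $\le d_{i}$. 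Put $m(x):=\prod_{i}m_{i}(x)$, monic separable of degree $d$, let $k:=0$ if $d$ is even and $k:=1$ (with a chosen root $r\in K$) if $d$ is odd, and let $b(x)$ be the corresponding monic polynomial ($b=1$, or $b=x-r$) of degree $k$, so that $d+k=:2e$ is even with $e=\lceil d/2\rceil$. The square root of $m(x)b(x)$ at $x=\infty$ is a power series in $1/x$ with coefficients in $\mathbb Z[\tfrac12]\subseteq K$; let $a(x)\in K[x]$ be its polynomial part, i.e.\ the unique monic polynomial of degree $e$ with $\deg(a(x)^{2}-m(x)b(x))\le e-1$, and set
\[
f(x):=a(x)^{2}-m(x)b(x),\qquad X\ :\ y^{2}=f(x).
\]
One checks $f\ne 0$; moreover the conditions ``$f$ separable'', ``$\deg f=e-1$'' and ``$f(\alpha_{i})\ne 0$ for all $i$'' each fail only on a proper closed subset of the parameter space of admissible tuples $(m_{1},\dots,m_{t})$, hence hold for a Zariski-dense set of admissible tuples, in particular for infinitely many tuples in which $m_{1}$ ranges over an infinite set --- producing infinitely many curves $X$. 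For such a choice, $P_{i}:=(\alpha_{i},a(\alpha_{i}))\in X(L_{i})$ is a non-Weierstrass special point with residue field $L_{i}$, and the $Q_{i}$ are pairwise distinct closed points because the $m_{i}$ are; this gives (a).

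Part (c) is then a computation: with $\deg f=e-1=\lceil d/2\rceil-1$, the smooth projective model of $X$ has genus $\lfloor(\lceil d/2\rceil-2)/2\rfloor$, which equals $q-1$ if $d\in\{4q,4q+1,4q+2\}$ and $q$ if $d=4q+3$; this is $\ge 1$ --- hence $\deg f\ge 3$ --- precisely because $d\ge 7$. For (d): if $d=4q$ or $d=4q+3$ then $\deg f$ is odd, so $X$ has a $K$-rational Weierstrass point above $x=\infty$; and if $d$ is odd then $f(r)=a(r)^{2}$ (since $b(r)=0$), so $(r,a(r))$ is a $K$-rational point. Choosing $r$ outside the finite set $\{\alpha_{i}:d_{i}=1\}$ makes these points distinct from $Q_{1},\dots,Q_{t}$, yielding one extra $K$-rational point when $d=4q$ or $d$ odd and two when $d=4q+3$. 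Finally, among the infinitely many $X$ produced above infinitely many are pairwise non-isomorphic over $\ol K$: an $\ol K$-isomorphism preserves the double cover of $\mathbb P^{1}$, hence induces a Möbius transformation matching the two branch loci, but only finitely many Möbius transformations match two given subsets of $\mathbb P^{1}$ of size $\ge 3$, while the branch loci genuinely vary with $m_{1}$ (for $g=1$ one checks directly that the $j$-invariant is non-constant along the family).

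For (b), let $H$ be the $K$-rational degree-$2$ divisor class $\pi^{*}\mathcal O_{\mathbb P^{1}}(1)$ and let $\varphi\colon X\to\Jac(X)$ be the morphism $P\mapsto[2(P)-H]=[(P)-(\iota P)]$, where $\iota$ is the hyperelliptic involution; $\varphi$ is the composite of an Abel--Jacobi embedding with multiplication by $2$, hence of degree at most $4^{g}$ (when $d\ne 4q+2$ one may instead use $P\mapsto[(P)-R]$ with $R$ a rational point from (d), of degree $1$). Fix $F$ as in the statement. The key point is that, for each $n$, the locus of admissible $X$ for which $\varphi(P_{i})$ has order $\le n$ for \emph{every} $i$ is closed in the parameter space (a finite union and intersection of vanishing loci of the sections $m\varphi(P_{i})$, $m\le n$, of the relative Jacobian) and proper, because there exists an admissible $X$ with some $\varphi(P_{i})$ of order $>n$ --- as one sees either from the case $g=1$ of \cite{Roh}, or, in general, by degenerating $X$ within the family to a curve acquiring a non-split multiplicative place, where $\varphi(P_{i})$ maps to an element of infinite order in the toric part of the Néron model. (When $d_{i}=1$ one can also argue directly: if $\varphi(P_{i})$ has order $n$ then $\dv(h)=n(P_{i})-n(\iota P_{i})$ for some $h$, i.e.\ a degree-$n$ map $X\to\mathbb P^{1}$ totally ramified over two points; since $n(P_{i})$ is not $\iota$-stable, $h$ does not factor through $\pi$, so the Castelnuovo--Severi inequality applied to $\pi$ and $h$ forces $n\ge g+1$.) Consequently, for all but finitely many of our curves $X$, every $\varphi(P_{i})$ has order $>N$ or infinite order, giving the first assertion of (b). When $K$ is global, the classical specialization theorem for Mordell--Weil groups applied to the family (together with the non-torsionness of $\varphi(P_{i})$ over $K(s)$ for a suitable family, shown as above) furnishes infinitely many $X$ for which some $\varphi(P_{i})$ has infinite order.

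The step I expect to be the main obstacle is the last one: upgrading the cheap lower bound $n\ge g+1$ to an order exceeding the \emph{arbitrary} $N$ (and to infinite order over global fields), which is what forces one to work in a family and to control the torsion of $\varphi(P_{i})$ as the curve varies --- delicate precisely because the $x$-coordinates $\alpha_{i}$ are fixed algebraic numbers while $X$ is allowed to move.
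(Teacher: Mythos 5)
Your construction for parts (a), (c), (d) is essentially the paper's own: fix generators $\alpha_i$ of the $L_i$, take the approximate square root $h(x)$ of $\prod_i m_i(x)$ (times a linear factor when $d$ is odd) as in Lemma \ref{lem.decomp}, and use the special points $(\alpha_i,h(\alpha_i))$ on $y^2=\ell(x)$; your genus and extra-rational-point bookkeeping agrees with \ref{composition}. The genuine gap is part (b). Two things are missing there. First, since $P_i$ has coordinates in $L_i$ and not in $K$, the assertion that ``$\varphi(P_i)$ has order $\le n$ for all $i$'' cuts out a closed subscheme of the parameter space over $K$ is not automatic; the paper builds the Weil-restriction formalism of \ref{proof-b} (an injective morphism $\iota:\Omega\to\Res_{V_R/V}C_{V_R}$ with $R=\prod_iL_i$, and the finite $V$-scheme $F_{N!}=\Res_{V_R/V}J[N!]_{V_R}$) precisely to make this a closed condition. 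Second, and more seriously, your reason why this locus is proper is only an unproved degeneration claim (``$\varphi(P_i)$ maps to an element of infinite order in the toric part of the N\'eron model'') together with a citation of Rohrlich that covers only number fields, $g=1$ and small $d$, plus a Castelnuovo--Severi bound you concede is too weak. Note that for $K=\overline{\mathbb F}_p$ every point of a torus or abelian variety over $K$ is torsion, so ``infinite order in the toric part'' cannot be the mechanism in general, and controlling where the fixed-$x$-coordinate points land under a degeneration is exactly the delicate point you flag. The paper needs none of this: in \ref{N-tors} it observes that $\dim\Omega=d$ while $\dim V=n+1=\lfloor(d-1)/2\rfloor+1<d$ and $\Omega\to V$ is dominant, so if the image of $\Omega$ lay in the finite-over-$V$ scheme $F_{N!}$, injectivity of $\iota$ would force $\Omega\to V$ to be quasi-finite, a contradiction; hence a dense open subset of $\Omega$ already gives order $>N$ or infinite order, with no individual member to exhibit.

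A smaller but real gap is the pairwise non-isomorphy over $\overline{K}$. Finiteness of the set of M\"obius transformations carrying one \emph{fixed} branch locus to another does not exclude that all your branch loci sweep out a single $\mathrm{PGL}_2$-orbit, i.e.\ that the moduli map is constant on the family; ``the branch loci genuinely vary with $m_1$'' is the statement to be proved, not an argument. The paper proves non-constancy of the induced map to $M_{g,K}$ (or of the $j$-invariant when $g=1$) by degenerating to non-smooth stable curves, Lemmas \ref{genus.small} and \ref{lem.semistable} and the discussion in \ref{orbits} and \ref{orbit3}. Relatedly, your ``parameter space of admissible tuples $(m_1,\dots,m_t)$'' must be taken to be the space of generators $\prod_i\Res_{L_i/K}\A^1_{L_i}\cong\A^d_K$, as in \ref{proof}: the set of minimal polynomials of generators of the fixed fields $L_i$ is only a thin (though Zariski-dense) set of $K$-points of the polynomial space, so the ``proper closed subset, hence still infinitely many admissible tuples'' argument has to be run on that variety, which is exactly how the paper proceeds.
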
 

The proof of Theorem \ref{pro.boundgeneral} is postponed to \ref{proof}. 
Our next corollary is a partial answer to the question raised at the beginning of this section.
Note that given a field $L/K$ of high degree $d$, 
our method
does not construct in general  a curve $X/K$ of low positive genus $g$ that contains a new point over $L$.

\begin{corollary} \label{thm.bound} Let $K$ be any infinite field with
  ${\rm char}(K) \neq 2$. Let $L/K$ be a  separable extension of
  degree $d$. 
\begin{enumerate}[\rm (1)]
\item If $d\le 9$, then there exists an elliptic curve $X/K$ with a
  new special point in $L$. 
\item If $d=10$, then there exists an elliptic curve $X/K$ with a
  new point in $L$. 
\item Let $g\ge 2$. 
Suppose that  
$$ 
g \geq 
\left\lbrace
\begin{matrix} 
\lfloor d/4 \rfloor  -1 &  \text{if } d\equiv   0, 1 \mod 4,\hfill   \\
\lfloor d/4 \rfloor \hfill & \text{if } d\equiv 2, 3 \mod 4.\hfill  
\end{matrix}
\right. 
$$
Then there exists a 
hyperelliptic curve $X/K$ of genus $g$ with a 
$K$-rational point and a new special point in $X(L)$. When $d\equiv 2
\mod 4$ and $g= \lfloor d/4 \rfloor -1 \geq 2$, there exists a hyperelliptic curve $X/K$ of genus $g$ with a new special point in $X(L)$.
\end{enumerate}
\noindent In all three cases, there are infinitely many such curves, pairwise 
non-isomorphic over $\overline{K}$. 
\end{corollary}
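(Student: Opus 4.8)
\emph{Sketch of the deduction from Theorem~\ref{pro.boundgeneral}.}
The plan is to apply Theorem~\ref{pro.boundgeneral} not to $L$ alone but to $L$ together with a suitable number of copies of the trivial extension $K/K$. Given $L/K$ of degree $d$ and an integer $s\ge 0$, I would invoke it with $t=s+1$, $L_1=L$ and $L_2=\cdots=L_{s+1}=K$; the relevant total degree is then $d':=d+s$. As soon as $d'\ge 7$ this produces infinitely many hyperelliptic curves $X/K$, pairwise non-isomorphic over $\overline K$, each carrying a new special point $P_1\in X(L)$ (the point we keep), each of genus prescribed by \ref{pro.boundgeneral}(c), and, whenever $s\ge 1$, each with a $K$-rational closed point $Q_2$ coming from the copy $L_2=K$. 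The bookkeeping underlying the whole argument is the observation that, for a fixed target genus $g\ge 1$, part~(c) yields genus exactly $g$ precisely when $d'\in\{4g+3,\,4g+4,\,4g+5,\,4g+6\}$ (the value $4g+3$ from $q=g$, $j=3$, the other three from $q=g+1$), and that \ref{pro.boundgeneral}(4) provides an extra $K$-rational point disjoint from $Q_1$ whenever $d'$ is odd or $d'\equiv 0\pmod{4}$. Thus, to realize a prescribed genus $g$ with a new point over $L$, it will suffice to land in this length-four window with $d'\ge d$ (and $d'\ge 7$, which is automatic when $g\ge 1$), while a $K$-rational point comes for free provided one can take either $s\ge 1$ or $d'\not\equiv 2\pmod{4}$.

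For (1) and (2) the target genus is $1$, with window $\{7,8,9,10\}$. If $d\le 9$, I would set $d'=\max(d,7)\in\{7,8,9\}$; all three values are $\not\equiv 2\pmod{4}$, so a $K$-rational point is supplied by \ref{pro.boundgeneral}(4) or, when $d<7$, by $Q_2$, and declaring it the origin makes $X$ an elliptic curve still carrying the new special point over $L$; this gives (1). If $d=10$ one is forced to $d'=10$, hence $s=0$ and $d'\equiv 2\pmod{4}$, so neither mechanism applies: here I would enter the proof of \ref{pro.boundgeneral}, where $X$ is exhibited as $y^2=f(x)$, and check that $f$ can be chosen so that $X$ acquires a $K$-rational point (for instance by imposing $f(a)=b^2$ for suitable $a,b\in K$, or by making the leading coefficient a square), making $X$ an elliptic curve; this gives (2). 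The delicate part of the corollary is precisely this borderline case $d=10$, the one not covered by \ref{pro.boundgeneral}(4); everything else is formal.

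For (3), fix $g\ge 2$ and assume $g$ meets the stated lower bound. A short check with $\lfloor d/4\rfloor$ shows that in every residue class of $d$ modulo $4$ the hypothesis forces $d\le 4g+5$, so I would take $d':=4g+5$ (which satisfies $d'\ge\max(d,7)$). Then $s=d'-d\ge 0$, the genus of $X$ is exactly $g$, $X$ carries the new special point $P_1\in X(L)$, and, $4g+5$ being odd, \ref{pro.boundgeneral}(4) produces a $K$-rational point (automatically distinct from $P_1$ once $[L:K]\ge 2$); this is the main assertion of (3). For its last sentence, in the case $d\equiv 2\pmod{4}$ and $g=\lfloor d/4\rfloor-1\ge 2$ one has $d=4g+6\ge 14$, and applying \ref{pro.boundgeneral} with $t=1$, $L_1=L$ (so $d'=d\ge 7$) gives infinitely many genus-$g$ curves with a new special point over $L$, no $K$-rational point being claimed --- this is exactly the unique configuration ($d'=d\equiv 2\pmod{4}$, $s=0$) in which neither source of a rational point is available, which explains both why the $K$-rational point is omitted there and why the bound in (3) reads $\lfloor d/4\rfloor$ rather than $\lfloor d/4\rfloor-1$ when $d\equiv 2\pmod{4}$. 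In all three parts the phrase "infinitely many, pairwise non-isomorphic over $\overline K$" is inherited verbatim from Theorem~\ref{pro.boundgeneral}.
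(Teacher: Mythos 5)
Your treatment of parts (1) and (3) is correct and is essentially the paper's own argument: pad $L$ with copies of the trivial extension $K/K$ so that the total degree lands in the window $\{4g+3,\dots,4g+6\}$ of Theorem~\ref{pro.boundgeneral}(c), and extract the $K$-rational point either from a padding copy of $K$ or from part (d) when the total degree is odd or divisible by $4$ (your uniform choice $d'=4g+5$ in (3) is in fact a cleaner bookkeeping than the paper's). The non-isomorphism claim does pass through unchanged in those cases.

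The genuine gap is exactly where you sensed the delicacy, namely part (2), $d=10$, and your proposed fix does not work. In the construction of Theorem~\ref{pro.boundgeneral} the quartic $\ell(x)$ is forced to be $h(x)^2-m_\beta(x)$ with $m_\beta$ the characteristic polynomial of a generator $\beta$ of $L$; the only control the proof gives is that the resulting points $(\ell_0,\dots,\ell_4)$ are Zariski-dense in the coefficient space. Conditions such as ``the leading coefficient $\ell_4$ is a square in $K$'' or ``$\ell(a)\in K^{*2}$ for some $a\in K$'' are not Zariski-open, so density gives you nothing: over $K=\mathbb{Q}$, say, there is no reason any achievable $\ell$ satisfies them, and a genus-one curve $y^2=\ell(x)$ with $\deg\ell=4$ can perfectly well have no $K$-rational point (it is in general a nontrivial $2$-covering of its Jacobian). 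This is why the statement of (2) claims only a new point, not a new special point (your version would prove the stronger, unproven assertion), and why the paper defers (2) to Corollary~\ref{cor.elliptic}: there one composes with the degree-$4$ morphism $X\to\Jac(X)$ and argues that the image of the degree-$10$ closed point has degree $10$ or $5$ on the elliptic curve $\Jac(X)$; if $L$ has no subfield of degree $5$ the second possibility is excluded, while if $L$ does contain a quintic subfield $L_0$ one uses that $L/L_0$ is quadratic, hence Kummer (as $\mathrm{char}(K)\neq 2$), and invokes the separate refinement Proposition~\ref{pro.bound} to produce curves $y^2=ax^4+bx^2+c$ with a $K$-rational point and a new point over $L$. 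Some argument of this kind (or another mechanism for producing a rational point or passing to the Jacobian) is needed to close your case $d=10$.
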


\begin{proof} Suppose $d\le 9$. Applying Theorem \ref{pro.boundgeneral} to 
the extensions $L_1:=L$ and $L_2=\cdots=L_{10-d}:=K$, we find infinitely many
hyperelliptic curves $X/K$ of genus $1$ with a $K$-rational point and
a new special point in $L$. This proves (1). 
Assertion (2) will be proved in Corollary \ref{cor.elliptic}. 

Let $g\ge 2$ as in (3). 
Let $s:=g-\lfloor d/4\rfloor +1$ if $d\equiv   0, 1 \mod 4$, and let $s:=g-\lfloor d/4\rfloor $ if $d\equiv   2,3 \mod 4$.
Let $t:= \max(0, 7-d)$. 
The desired curve $X/K$ is the curve obtained in Theorem \ref{pro.boundgeneral}
with the choice of extensions $L_1:=L$ when $s+t=0$, and $L_1:=L$ and $L_2=\dots=L_{4s+t}:=K$ when $4s+t>1$ (which always happens when $t>0$ since $g \geq 2$).
The fact that there are infinitely many such curves also follows immediately from \ref{pro.boundgeneral}.
\end{proof} 

\begin{remark} \label{special.points}
Let $K$ be a number field. Let $X/K$ be a smooth projective geometrically connected curve of genus $g$. Let $\pi:X \to {\mathbb P}_K^1$ 
be a finite $K$-morphism. 
Let $d\geq 1$ be an integer. 
Consider the set $S_{\pi, d}\subset X$ of closed points $P\in X$ of
degree $d$ over $K$ such
that the induced homomorphism on the residue fields $K(\pi(P))\to
K(P)$ is an isomorphism. 
Vojta shows in \cite{Voj}, Corollary 0.4, that if $g > (d-1)\deg(\pi) +1$, then $S_{\pi,d}$ is finite.
When $d=\deg(\pi)=2$, or when $d=1$, this result is due to Faltings, \cite{Fal}, Theorem 1.
When $X/K$ is hyperelliptic with $g \geq 2$
and $\pi$ is the canonical morphism of degree $2$, the set $S_{\pi,d}$ contains only special points by definition.

When $d \geq 2$,  we find using  Theorem \ref{pro.boundgeneral} a hyperelliptic curve of genus $g$ with $\lfloor (4g+6)/d \rfloor$ special closed points
of degree $d$. In case $d$ does not divide $4g+6$, we can also find such a curve with a $K$-rational point.

Returning to Vojta's Theorem, our construction implies the following:
{\it Let $K$ be a number field. Let $X/K$ be a 
hyperelliptic curve of genus $g$, with canonical morphism $\pi:X \to {\mathbb P}_K^1$ of degree $2$. If $g \geq 2d$, then $S_{\pi,d}$ is finite, and there exist examples of such a curve 
with at least eight special closed points of degree $d$.}
\end{remark}

The proof of Theorem \ref{pro.boundgeneral} uses the following crucial lemma, whose proof is well known, but which we reproduce here
due to its importance in defining the map $r : {\mathbb A}_K^d \to {\mathbb A}_K^{n+1}$ in \ref{def.g}. The polynomial $h(x)$ 
below is called the {\it approximate square root} of $m(x)$ in \cite{AM},
page 48. See for instance \cite{Mes} or \cite{Shi} for earlier uses of this lemma in the context of rational points on hyperelliptic curves. 

\begin{lemma} \label{lem.decomp} Let $R$ be a ring such that $2$ is invertible in $R$.
Let $m(x) \in R[x]$ be a monic polynomial of even degree $2n$. Then
there exists a unique pair of polynomials $h(x), \ell(x) \in R[x]$ such that 
$h(x)$ is monic of degree $n$, $\ell(x)$ has degree at most $n-1$, and 
$m(x) = h(x)^2-\ell(x)$.
\end{lemma}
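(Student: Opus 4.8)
The plan is to prove Lemma~\ref{lem.decomp} by a direct inductive construction of the coefficients of $h(x)$, starting from the top degree and working down, using invertibility of $2$ to solve for each coefficient in turn. Write $m(x) = x^{2n} + c_{2n-1}x^{2n-1} + \cdots + c_0$ and look for $h(x) = x^n + b_{n-1}x^{n-1} + \cdots + b_0$. The idea is that $h(x)^2$ has leading term $x^{2n}$, and the coefficient of $x^{2n-k}$ in $h(x)^2$, for $1 \le k \le n$, has the form $2b_{n-k} + (\text{polynomial in } b_{n-1},\dots,b_{n-k+1})$; since $2$ is invertible, this equation determines $b_{n-k}$ uniquely once the higher coefficients are known. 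Thus $h(x)$ is forced and unique. Having fixed $h(x)$, one simply \emph{defines} $\ell(x) := h(x)^2 - m(x)$; by construction the coefficients of $x^{2n}, x^{2n-1}, \dots, x^n$ in $h(x)^2 - m(x)$ all vanish, so $\ell(x)$ has degree at most $n-1$, as required.

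More precisely, I would organize the existence half as follows. First, set $b_n := 1$. Then, for $k = 1, 2, \dots, n$ in order, observe that the coefficient of $x^{2n-k}$ in $\bigl(\sum_{i=0}^n b_i x^i\bigr)^2$ equals $\sum_{i+j = 2n-k,\ 0\le i,j\le n} b_i b_j$; isolating the two terms with $\{i,j\} = \{n, n-k\}$ gives $2 b_{n-k} + \sum_{\substack{i+j=2n-k\\ n-k < i,j < n}} b_i b_j$, where the remaining sum involves only the already-defined coefficients $b_{n-1}, \dots, b_{n-k+1}$. Setting this expression equal to $c_{2n-k}$ and dividing by $2$ defines $b_{n-k} \in R$ uniquely. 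After all $b_i$ are defined, $h(x) := \sum_{i=0}^n b_i x^i$ is monic of degree $n$ and $h(x)^2 - m(x)$ has no term of degree $\ge n$, so $\ell(x) := h(x)^2 - m(x)$ works.

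For uniqueness, suppose $h_1^2 - \ell_1 = h_2^2 - \ell_2 = m$ with both $h_i$ monic of degree $n$ and both $\ell_i$ of degree $\le n-1$. Then $h_1^2 - h_2^2 = \ell_1 - \ell_2$ has degree $\le n-1$. But $h_1 - h_2$ has degree $\le n-1$ (the leading terms cancel since both are monic of degree $n$), while $h_1 + h_2$ is monic of degree $n$; if $h_1 \ne h_2$, write $h_1 - h_2$ with leading term $a x^s$, $a \ne 0$, $s \le n-1$, and then $(h_1-h_2)(h_1+h_2)$ has leading term $a x^{s+n}$ of degree $s+n \ge n$, contradicting $\deg(h_1^2 - h_2^2) \le n-1$. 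Hence $h_1 = h_2$ and then $\ell_1 = \ell_2$. (Here one only needs that $R$ has no issue with the leading coefficient of a product, which is automatic since $h_1 + h_2$ is monic; invertibility of $2$ is not even needed for uniqueness, though it is convenient to invoke it to match the inductive determination above.)

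There is no real obstacle here: the argument is entirely formal. The only point requiring a small amount of care is the bookkeeping in the inductive step — verifying that the ``error term'' $\sum b_i b_j$ at stage $k$ genuinely involves only coefficients with strictly larger index — and making sure the index ranges $n-k < i, j < n$ are stated correctly. I would present the existence argument as the inductive construction above and keep the uniqueness argument as the short degree count, so the whole proof is a few lines.
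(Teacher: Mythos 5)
Your existence argument is exactly the paper's proof: solve for the coefficients $h_{n-1},\dots,h_0$ of $h$ from the top down, using that the coefficient of $x^{2n-k}$ in $h(x)^2$ is $2h_{n-k}$ plus terms in already-determined coefficients, then set $\ell:=h^2-m$; the paper leaves uniqueness to the reader, and your degree-count supplies it. One slip in that uniqueness paragraph, though: $h_1+h_2$ is \emph{not} monic --- its leading coefficient is $2$ --- so your stated justification is off, and your parenthetical claim that invertibility of $2$ is not needed for uniqueness is actually false: over $R=\mathbb{Z}/4\mathbb{Z}$ with $m(x)=x^2$ both $(h,\ell)=(x,0)$ and $(x+2,0)$ work. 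The argument is easily repaired under the lemma's hypothesis: since $2$ is a unit, the product $(h_1-h_2)(h_1+h_2)$ has leading coefficient $2a\neq 0$ in degree $s+n\geq n$, contradicting $\deg(\ell_1-\ell_2)\leq n-1$; alternatively, uniqueness follows from the same top-down induction, since each $h_{n-k}$ is forced.
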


\proof The uniqueness of such a pair is easy and is left to the reader.
The existence of $h(x)$ and $ \ell(x) \in R[x]$ is shown as follows. Write $h(x) = h_0+h_1x+ \dots + h_{n-1}x^{n-1}+x^n$
and $m(x) = m_0+m_1x+ \dots + m_{2n-1}x^{2n-1}+x^{2n}$.
Writing $h^2(x)$ explicitly, we find that for $i=0, \dots, n-1$, the coefficient $c_{2n-i}$ of $x^{2n-i}$ in $h(x)^2$ 
depends only on the $i$ coefficients $h_{n-i}, \dots, h_{n-1}$,
and moreover $$c_{2n-i}= 2h_{n-i}+ \text{\rm other terms},$$ where the `other terms' do not involve the variable $h_{n-i}$.
Hence, since $2$ is invertible, 
we can solve first the equation $c_{2n-1}=m_{2n-1}$ for $h_{n-1}$, and then $c_{2n-2}=m_{2n-2}$ for $h_{n-2}$, all the way to $h_{0}$.
With this choice of $h(x)$, we find that $m(x)-h(x)^2$ is a polynomial of degree at most $n-1$, and we set $-\ell(x):=m(x)-h(x)^2$.
\qed

\begin{emp} \label{def.f}
The proof of Theorem \ref{pro.boundgeneral} will also need the following facts. Let $L/K$ be a separable extension of degree $d$. Choose a generator $\alpha \in L$, so that we have $L=K(\alpha)$. 
Each element $\beta \in L$ can be written uniquely as $a_0 +a_1\alpha+\dots+a_{d-1}\alpha^{d-1}$
for some $a_0, \dots, a_{d-1} \in K$.  We denote by $m_\beta(x)$ the minimal polynomial of $\beta$ over $K$.

 Consider now the polynomial ring in $d$ variables $K[t_0,\dots,t_{d-1}]$, and its finite 
extension $K[t_0,\dots,t_{d-1}][\alpha]$. Let $\alpha_t := t_0+t_1 \alpha+\dots+t_{d-1}\alpha^{d-1}$.
The ring extension $K[t_0,\dots,t_{d-1}][\alpha]/K[t_0,\dots,t_{d-1}]$ is free of rank $d$, and we can thus consider the characteristic polynomial 
of $\alpha_t$, denoted by $\chi_{\alpha_t}(x) \in
K[t_0,\dots,t_{d-1}][x]$. 
Writing $\chi_{\alpha_t}(x) = P_0+P_1x+\dots+P_{d-1}x^{d-1}+x^d$ with 
$P_i \in K[t_0,\dots,t_{d-1}]$, 
we define $f_\alpha:\A^{d}_K \to \A^{d}_K$ to be given by the
homomorphism of 
$K$-algebras
\begin{equation} \label{mor}
K[T_0,\dots, T_{d-1}] \longrightarrow K[t_0,\dots,t_{d-1}], \quad T_i \longmapsto P_i.
\end{equation}
By construction, the evaluation of $f_\alpha$ at a $K$-rational point 
$(a_0,\dots,a_{d-1})$ sends the corresponding $\beta=a_0+a_1\alpha+\dots+a_{d-1}\alpha^{d-1}$ to its characteristic polynomial
over $K$.
When $\beta $ generates $L$, then 
its characteristic polynomial equals $m_\beta(x)$.
\end{emp}

\begin{proposition} \label{pro.finite} Let $K$ be any field, and let
  $L/K$ be a separable extension of degree $d$ with $L=K(\alpha)$. 
Then the morphism $f_\alpha$ defined by \eqref{mor} is finite and free of
degree $d!$. In particular $f_\alpha$ is surjective. 
\end{proposition}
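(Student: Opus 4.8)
The plan is to identify the morphism $f_\alpha$ with a well-understood classical map and then read off its degree. First I would observe that $f_\alpha$ factors as $\A^d_K \to \A^d_K \to \A^d_K$, where the first arrow sends $(a_0,\dots,a_{d-1})$ to the $d$-tuple of elementary symmetric functions of the $d$ conjugates $\sigma_1(\beta),\dots,\sigma_d(\beta)$ of $\beta = a_0 + a_1\alpha + \dots + a_{d-1}\alpha^{d-1}$ over a fixed splitting field $K^{\mathrm s}$ of $m_\alpha(x)$, and the second is just the (linear) reindexing taking elementary symmetric functions to the coefficients $P_i$ of the characteristic polynomial. Since the second map is an isomorphism, it suffices to compute the degree of the first. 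The key point here is that, after the linear base change $(a_i) \mapsto (\beta^{(k)}) := (\sum_i a_i \sigma_k(\alpha)^i)_{k=1}^d$ — which is an isomorphism over $K^{\mathrm s}$ because the matrix $(\sigma_k(\alpha)^i)_{k,i}$ is a Vandermonde matrix, invertible as $\alpha$ has $d$ distinct conjugates by separability — the first map becomes the standard map $q: \A^d \to \A^d$ sending a $d$-tuple $(u_1,\dots,u_d)$ to its elementary symmetric functions $(e_1(u),\dots,e_d(u))$.

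Next I would invoke the classical fact that $q: \A^d \to \A^d$ is finite, free, and faithfully flat of degree $d!$: indeed $K[u_1,\dots,u_d]$ is free over the subring $K[e_1,\dots,e_d]$ of symmetric polynomials with basis the $d!$ monomials $u_1^{a_1}\cdots u_d^{a_d}$, $0 \le a_i \le d-i$ (this is the standard "staircase" basis). The map $q$ is the quotient by the $S_d$-action permuting the $u_i$, which is free in the scheme-theoretic sense on the generic fibre, giving generic degree $d!$, and finiteness plus flatness over the whole base follows since the target is regular (normal suffices) and the source is Cohen–Macaulay of the same dimension. Pulling this back along the Vandermonde isomorphism, $f_\alpha$ is finite, free of degree $d!$ over $K$; in particular, being finite and surjective onto a fibre of $K$-points it is surjective. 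A small subtlety is that the Vandermonde change of coordinates is only defined over $K^{\mathrm s}$, not over $K$; but finiteness, flatness, and the rank of a module are all properties that may be checked after the faithfully flat base change $K \to K^{\mathrm s}$, so the conclusion descends. Alternatively, one can argue directly over $K$: the inclusion $K[T_0,\dots,T_{d-1}] \hookrightarrow K[t_0,\dots,t_{d-1}]$ induced by \eqref{mor} makes $K[t_0,\dots,t_{d-1}]$ into a module over $K[T_0,\dots,T_{d-1}]$; base-changing to $K^{\mathrm s}$ turns this into the symmetric-function inclusion, which is finite free of rank $d!$, and faithful flatness of $K^{\mathrm s}/K$ transfers both finiteness and the rank.

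The main obstacle I anticipate is not conceptual but bookkeeping: making precise that the linear substitutions (the change of variables $(t_i)$ realizing $\alpha_t$ as $\beta$ specialized, and the Vandermonde substitution on the conjugates) genuinely conjugate $f_\alpha$ into the symmetric-function map $q$, and that the characteristic polynomial of $\alpha_t$ in $K[t_0,\dots,t_{d-1}][\alpha]$ really does specialize, under a $K$-point $(a_i)$, to $\prod_k (x - \sigma_k(\beta))$ — that is, that the characteristic polynomial of multiplication by $\beta$ on $L$ equals $\prod_k (x-\sigma_k(\beta))$, which holds exactly because $L/K$ is separable (so $L \otimes_K K^{\mathrm s} \cong (K^{\mathrm s})^d$ and multiplication by $\beta$ becomes diagonal with entries $\sigma_k(\beta)$). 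Once these identifications are in place, the degree count $d!$ is immediate from the symmetric-function picture, and surjectivity follows formally from finiteness together with freeness (a nonzero finite free morphism to an integral scheme is surjective).
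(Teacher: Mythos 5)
Your proposal is correct and follows essentially the same route as the paper: base change to a splitting field (the paper uses $\overline{K}$), apply the Vandermonde change of variables made invertible by separability, and reduce to the classical fact that the polynomial ring is finite free of rank $d!$ over the subring of symmetric functions. Your added details (the staircase basis and faithfully flat descent of finiteness, flatness, and rank) only make explicit what the paper leaves to the reader.
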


\begin{proof} It suffices to prove that the extension $K[T_0,\dots, T_{d-1}] \longrightarrow K[t_0,\dots,t_{d-1}]$
is free of rank $d!$ after extension of the scalars to $\overline{K}$, that is, it suffices to prove
that $\overline{K}[T_0,\dots, T_{d-1}] \longrightarrow \overline{K}[t_0,\dots,t_{d-1}]$ is free of rank $d!$.

Let $\alpha_0:=\alpha$, and let $\alpha_0, \dots, \alpha_{d-1}$ denote the $d$ distinct conjugates of $\alpha$ in
$\overline{K}$. Let 
$$x_{i}:=\sum_{0\le j\le d-1}\alpha_i^j  t_j \in 
\overline{K}[t_0,\dots, t_{d-1}].$$ 
Let $N$ denote the $(d \times d)$ Vandermonde matrix with coefficients $n_{ij}= \alpha_i^j$, $0 \leq i,j \leq d-1$. 
As $L/K$ is separable, $N$ has non-zero determinant. Letting $N^t$ denote the transpose
of the matrix $N$, we have
$$(x_0, \dots, x_{d-1})= (t_0,\dots,t_{d-1})N^t.$$
Recall that by construction, 
$$\chi_{\alpha_t}(x)=P_0+P_1x+\dots+P_{d-1}x^{d-1}+x^d= \prod_{i=0}^{d-1}(x-x_i).$$ 
After we make the change of variables 
$(t_0,\dots,t_{d-1}) \mapsto (x_0, \dots, x_{d-1})$, 
$\overline{K}[T_0,\dots,T_{d-1}]$ 
is sent to the $\overline{K}$-subalgebra 
of $\overline{K}[x_0,\dots,x_{d-1}]$ generated by the symmetric
functions in $x_0,\dots,x_{d-1}$. 
It is a classical result  
that this extension is
Galois with Galois group the permutation group on $d$ elements. 
As $\overline{K}[T_0,\dots, T_{d-1}]\to \overline{K}[x_0,\dots,
x_{d-1}]$ is finite, it is injective because both domains have the same 
Krull dimension. 
\end{proof}
 
\begin{emp} \label{proof} 
{\it Proof of Theorem} \ref{pro.boundgeneral}. We start by defining three maps $f, \mu$, and $r$, and then use the composition $r \circ \mu \circ f$
in \ref{composition} to show that there exist some hyperelliptic curves which satisfy the conclusion (a) of the theorem. 
In \ref{orbits}, we refine our arguments and show that in fact infinitely such hyperelliptic curves exist. In \ref{proof-b}, we  set up some machinery involving Weil restrictions needed in the proof of Part (b) in \ref{N-tors}. Our constructions are such that the conclusions of (c) and (d) will be immediate to verify.

For each $i=1,\dots, t$, fix a generator $\alpha_i \in L_i$, so that $L_i=K(\alpha_i)$.
For later considerations, if $L_i=L_j$, then we choose $\alpha_i=\alpha_j$. 
For each $i$, consider the morphism $f_{\alpha_i} :\A^{d_i}_K \to \A^{d_i}_K$ defined in \eqref{mor}. 
Identify $\prod_{i=1}^t \A^{d_i}_K$ with $\A^{d}_K$, and consider 
$$ f:= f_{\alpha_1} \times \ldots \times f_{\alpha_t}: \A^{d}_K \longrightarrow \A^{d}_K.$$
Using \ref{pro.finite}, we find that $f$ is surjective. The source
scheme $\A^d_K$ will often be viewed as $\prod_{i=1}^{t} \Res_{L_i/K}
\A^1_{L_i}$ so that its $K$-rational points are canonically identified
with $\prod_i L_i$. 

For an affine space ${\mathbb A}^s_K$ which appears in the source or
target of the morphisms $\mu$ and $r$ that we define below, 
we will often think  of a
$\overline{K}$-rational point $(m_0,\dots,m_{s-1}) \in {\mathbb A}^s_K(\overline{K})$ as the monic polynomial 
$m_0+m_1 x+ \dots + m_{s-1}x^{s-1}+x^s\in \overline{K}[x]$. 
Define the affine morphism $$\mu: \A^{d}_K =\prod_{i=1}^{t} \A^{d_i}_K\longrightarrow \A^{d}_K$$
such that on $\overline{K}$-rational points, 
$$\mu(m_0^{(1)},\dots, m^{(1)}_{d_1-1}, \dots,  m_0^{(t)},\ldots, m^{(t)}_{d_t-1})
:= (m_0, m_1,\dots,m_{d-1}),$$ 
with the property that 
\begin{multline*} m_0+m_1x+ \dots+m_{d-1}x^{d-1}+x^d = \\ 
(m_0^{(1)}+m_1^{(1)}x + \dots + m^{(1)}_{d_1-1}x^{d_1-1}+x^{d_1}) \cdots  (m_0^{(t)}+ \dots +m^{(t)}_{d_t-1}x^{d_t-1} + x^{d_t}).
\end{multline*}
In terms of monic polynomials, the map $\mu$ is defined by 
$$(m^{(1)}(x),\dots, m^{(t)}(x)) \longmapsto m(x):=m^{(1)}(x)\cdots m^{(t)}(x).$$ 
We note that the morphism $\mu$ is surjective. Indeed, it is surjective on $\overline{K}$-points, since any monic polynomial $m(x) \in \overline{K}[x]$ 
of degree $d=\sum_{i=1}^t d_i$ can be factored over $\overline{K}$
into a product of monic polynomials of degrees $d_1, \dots, d_t$. When
$t=1$, $\mu$ is just the identity map. 
\end{emp} 

\begin{emp} \label{def.g}
Given $d\ge 7$, let us define 
$$ n:= \lfloor (d-1)/2 \rfloor.$$
Given any polynomial $m(x) \in K[x]$ of even degree $d$, we use Lemma \ref{lem.decomp}
to write $m(x)= h(x)^2 - \ell(x)$, with $\deg(h)=n+1$ and $\deg(\ell) \leq n$. 
Given any polynomial $m(x) \in K[x]$ of odd degree $d$, we use Lemma \ref{lem.decomp}
to write $xm(x)= h(x)^2 - \ell(x)$, with $\deg(h)=n+1$ and $\deg(\ell) \leq n$. 
Finally, we define a $K$-morphism 
$$r :\A^{d}_K \longrightarrow \A^{n+1}_K,$$
which sends $(m_0,\dots, m_{d-1}) $ to $(\ell_0,\dots, \ell_{n-1}, \ell_n)$, 
where $m(x)= m_0+ \dots+ m_{d-1}x^{d-1}+x^d$ and  $(\ell_0,\dots, \ell_{n-1}, \ell_n)$ are the coefficients of the associated polynomial $\ell(x)$
considered as a polynomial of degree $n$. We note here  
that the morphism $r$ is {\it surjective}. 
Indeed, 
it is clear when $d $ is even that this morphism is surjective on
$\overline{K}$-points,
since given any point in $\A^{n+1}_K(\overline{K})$ thought of as a polynomial  $\ell(x)$ of degree 
at most $n$, 
$\ell(x)$ is the image under $r$ of the point corresponding to $-\ell(x) $ thought of  as a polynomial of degree 
at most $d$.
Similarly, when $d $ is odd,   this morphism is surjective on $\overline{K}$-points 
since, given any $\ell(x)$ of degree 
at most $n$, 
$\ell(x)$ is the image under $r$ of the point corresponding to $m(x):=\frac{1}{x} \left((x^{(d+1)/2} +\sqrt{\ell(0)})^2-\ell(x)\right)$.
\end{emp}

\begin{emp} \label{composition}
 Consider the composition 
\begin{equation} \label{twomaps}
 \A^{d}_K \overset{f} \longrightarrow   \A^{d}_K \overset{\mu}
 \longrightarrow \A^{d}_K \overset{r}\longrightarrow \A^{n+1}_K.  
 \end{equation} 
By the previous considerations, we know that it is surjective. Given a $K$-rational point 
$$(a_0^{(1)},\ldots,a^{(1)}_{d_1-1}, \dots,
a_0^{(t)},\ldots,a^{(t)}_{d_t-1})$$
in the source of this composition, 
we obtain under the composition a $K$-rational point $(\ell_0,\dots, \ell_{n-1}, \ell_n)$ in $\A^{n+1}_K$
such that, letting $\ell(x)= \ell_0+\dots + \ell_{n}x^{n}$ and 
$$\beta_1:=a_0^{(1)}+\dots+a^{(1)}_{d_1-1}\alpha_1^{d_1-1}, \quad
\dots \quad, \beta_t:=a_0^{(t)}+\dots+a^{(t)}_{d_t-1}\alpha_t^{d_t-1},$$
 then the affine curve $y^2=\ell(x)$ contains 
the 
special points $(\beta_i, h(\beta_i))$ for $i=1,\dots, t$. 
Indeed, 
when $d$ is even and  
$\chi_{\beta_i}(x)$ denotes the characteristic polynomial of
$\beta_i$, then by construction 
$$\prod_{1\le i\le t} \chi_{\beta_i}(x) = h(x)^2-\ell(x).$$ 
Note that when $d$ is odd, 
$\ell(0)=h(0)^2$, so this curve also contains a $K$-rational point $(0, h(0))$.
If $d\equiv 0 \mod 4$, $\deg \ell(x)$ is odd and the point
at infinity is a rational point. 

Consider the (affine) dense open subset $V$ of $\A^{n+1}_K$ obtained as the complement of 
the union of the hyperplane $\ell_{n}=0$ and of the hypersurface ${\rm disc}(\ell(x)) =0$. Any $K$-rational point
in $V$ corresponds to a separable polynomial  $\ell(x) \in K[x]$ of degree
$n$. The equation $y^2=\ell(x)$ then defines a hyperelliptic curve
over $K$, of genus  
$g \ge 1$ if $d\ge 7$. 

We now define a dense open subscheme $U$ in the source space $\prod_{i=1}^t \A^{d_i}_K$ of the composition $r \circ \mu \circ f$ as follows.
Recall that since $L_i/K$ is finite and separable, there are only
finitely many proper extensions $F$ of $K$ in $L_i$. Since $F$ is a
subspace of the $K$-vector space $L_i$, if we identify $\A^{d_i}_K(K)$
with $L_i$ using the base $\{ 1, \alpha_i, \dots, \alpha_i^{d_i-1}\}$,
we can find a hyperplane $H_F$ in $\A^{d_i}_K$ such that 
$F\subseteq H_F(K)$. 
Consider the dense open subset $\Omega_i$ of $\A^{d_i}_K$ obtained as
the complement of the union of the hyperplanes $H_F$, 
the union being taken over all $F$ proper extensions of $K$ in $L_i$. 
Any $K$-rational point in $\Omega_i$ corresponds to $\beta_i \in L_i$
with $L_i=K(\beta_i)$.  Let $U_i$ denote the preimage of $\Omega_i$ under the projection map $\prod_{j=1}^t \A^{d_j}_K \to \A^{d_i}_K$.
Finally, let $U$ be the complement in $\cap_{i=1}^t U_i$ of the
$2$-diagonals of $\A^{d}_K$ 
(a $2$-diagonal in an affine space $\A^{s}_K= \Spec K[t_1,\dots,t_s]$ is a hyperplane defined by an equation of the form $t_i-t_j=0$ with $i\neq j$). Then any point in $U(K)$ induces pairwise
distinct generators $\beta_1, \dots, \beta_t$ of $L_1, \dots, L_t,$ respectively. 

Consider now $U'$, the preimage of the dense open subset $V \subseteq \A^{n+1}_K$ under the composition 
$r \circ \mu \circ f$. Then by construction, any $K$-rational point of 
\begin{equation}  \label{eq:Omega}
\Omega:=U\cap U' 
\end{equation}
defines a hyperelliptic curve, 
 of the form $y^2=\ell(x)$ with $\ell(x) $ of degree $n$,
that satisfies 
the conclusion (a) of the theorem. We now turn to  showing that we can find infinitely many non-isomorphic such curves.
\end{emp}

\begin{emp} \label{orbits} Identify $\A^{n+1}_K$ with ${\rm Spec}(K[\ell_0,\dots, \ell_n])$. Recall that over the open subscheme $V \subset \A^{n+1}_K$, the equation $y^2 = \ell_n x^n+\dots +\ell_0$ defines, by definition of $V$, 
a family of smooth proper hyperelliptic curves of genus $g=\lfloor (n-1)/2\rfloor >0$. We let 
$C\to V$ denote this family.

Suppose first $n\ge 5$, so that any fiber of $C/V$  has genus   $g \ge 2$.
Let ${\mathcal M_{g,K}}/K$  be the proper Deligne--Mumford stack 
of stable curves of genus
$g \geq 2 $ over $\Spec K$ (see \cite{D-M}, 5.1). This stack  admits a coarse moduli space $ M_{g,K}/K$ which is proper over $K$.

We will only use here the fact that $M_{g,K}/K$ exists, has positive dimension, and  
that the natural $K$-morphism 
$$ V\longrightarrow M_{g,K},$$
defined by the family $C/V$, has an image $W$ in $M_{g,K}$ of positive dimension. This latter fact is well known, 
and follows for instance from \ref{genus.small} or \ref{lem.semistable}.

Thus the composition 
$$ \Omega\longrightarrow  V \longrightarrow W $$ 
is dominant. As $\Omega(K)$ is dense in $\Omega\subseteq \A^{d}_K$,
its image in $W$ is a dense subset, which is infinite since  dim $W >0$.
Hence, we can find a subset $S$ of  $\Omega(K)$ which is infinite and maps bijectively onto its image.
The hyperelliptic curves associated to the elements of $S$ are pairwise non-isomorphic over $\overline{K}$, as desired.

Suppose now that $n=3$ or $4$, so that the curve $X/K$ defined by $y^2=\ell(x)$ has genus $1$. 
Consider the $K$-morphism 
$$j: V\longrightarrow \A^1_K,$$ 
which maps a separable polynomial $\ell(x)$ of degree $n$ to the $j$-invariant
of the Jacobian of $X$. Since we could not find an adequate reference in the literature, 
we briefly recall in \ref{orbit3} how this morphism of schemes is defined.
This morphism is surjective on $\overline{K}$-points and the same arguments 
as above show that we can find a subset $S$ of  $\Omega(K)$ which is infinite and maps bijectively onto its image.
The hyperelliptic curves associated to the elements of $S$ are pairwise non-isomorphic over $\overline{K}$, as desired.
\end{emp}

\begin{emp}\label{proof-b}
It remains to show that infinitely many curves satisfy both conclusions (a) and  (b)
of the theorem. For this we need to introduce the following notation.
Let $R:=\prod_{i=1}^t L_i$.  
We write the base change $V\times_{\Spec K} \Spec R$ as $V_R$. The tautological family $C \to V$ 
induces by base change the family $C_{V_R} \to V_R$, and we can
consider the Weil restriction ${\rm Res}_{V_R/V} \, C_{V_R}  \to V$ along 
the finite free morphism $V_R \to V$, which exists since $C_{V_R} \to V_R$ has projective fibers
 (\cite{BLR}, 7.6, Theorem 4). 
One checks that there is a natural $V$-isomorphism  
\begin{equation} \label{eq.Weilres}
{\Res}_{V_R/V} \,  C_{V_R} \longrightarrow \prod_i \Res_{V_{L_i}/V} \, C_{V_{L_i}}.
\end{equation}

The composition of maps 
$\A^{d}_K \overset{f} \longrightarrow   \A^{d}_K \overset{\mu}
\longrightarrow \A^{d}_K \overset{r}\longrightarrow \A^{n+1}_K $
introduced in \eqref{twomaps}  
induces by restriction a natural morphism $\Omega\to V$ as in \eqref{eq:Omega}.
This latter morphism is loosely described as follows (when $d$ is even). 
Given $\beta =(\beta_1,\dots, \beta_t)$
corresponding to a point in $\Omega(K)$, write the product of the
minimal polynomials of $\beta_1,\dots, \beta_t, $   as $h_{\beta}(x)^2
- \ell_\beta(x)$. Then the image of $\beta $ is  $\ell_\beta(x)\in V(K)$.
The curve $y^2=\ell_\beta(x)$ comes equipped with the points $(\beta_i, h_{\beta}(\beta_i))$.
Our goal now is to express precisely how these points depend on $\beta$.
This is done through the following claim: {\it The  morphism $\Omega\to V$ factors as
$$\Omega \stackrel{\iota}{\longrightarrow} {\Res}_{V_R/V} \,  C_{V_R}
\longrightarrow V,$$
with an {\it injective} morphism $\iota$.} 
In an imprecise manner, this factorization 
can be described as follows: Given $\beta =(\beta_1,\dots, \beta_t)$
corresponding to a point in $\Omega(K)$, 
then  
$$
\iota(\beta)=(\beta, h_\beta(\beta_1),\dots, h_\beta(\beta_t), \ell_\beta(x)).$$
The second map, ${\Res}_{V_R/V} C_{V_R} \to V$, is simply the projection onto $\ell_\beta(x)$. 
To define $\iota$ more rigorously, we proceed as follows. 
\end{emp}

\begin{emp} An affine open subset of the curve $C/V$ is given in the affine plane $\A^2_V$
by the equation $y^2 = \ell_n x^n+\dots +\ell_0$.
This  affine scheme is the spectrum of the ring 
$$\cO_V(V)[x,y]/(y^2-(\ell_n x^n+\dots +\ell_0)).$$
In order to ease the notation, we identify 
as in \eqref{eq.Weilres}
${\Res}_{V_R/V} \, C_{V_R} $ with $ \prod_i \Res_{V_{L_i}/V} \, C_{V_{L_i}}$,
and
only describe here the morphism 
$\iota^{(1)}: \Omega \to {\rm Res}_{V_{L_1}/V} \, C_{V_{L_1}}$. 
Recall that $\alpha_1$ is the generator of $L_1/K$ 
involved in the
definition of the morphism $f$ (see \ref{proof}), so that we use below when computing the Weil restriction that
$\{1, \alpha_1, \dots, \alpha_1^{d_1-1}\}$ is a basis for $L_1$ over $K$. Denote by
$(a_0^{(1)}, \dots, a_{d_1-1}^{(1)}, \dots, a_0^{(t)}, \dots,
a_{d_t-1}^{(t)})$ the variables of $\A^d_K=\prod_i \A^{d_i}_K$. Let us  
introduce variables $y_0,\dots, y_{d_1-1}$  
and define polynomials 
$$f_i \in K[\ell_0, \dots, \ell_n, a_0^{(1)},\dots,a_{d_1-1}^{(1)},y_0,\dots,
y_{d_1-1}], 
\quad i=0,\dots, d_1-1,$$
uniquely by the equation 
$$\sum_{i=0}^{d_1-1}f_i \alpha_1^i=\left(\sum_{i=0}^{d_1-1}
  y_i\alpha_1^i\right)^2 - 
\left(\ell_n  \left(\sum_{i=0}^{d_1-1}
    a_i^{(1)}\alpha_1^i\right)^n+\dots +\ell_1  \left(\sum_{i=1}^{d_1-1} a_i^{(1)}\alpha_1^i\right)+\ell_0 \right).$$
The Weil restriction from $V_{L_1}$ to $V$ of an open subscheme of $C_{V_{L_1}}$ can be given as the closed subscheme of $\A^{d_1}_V \times_V \A^{d_1}_V$ 
whose affine algebra is
$$A^{(1)}:=\cO_V(V)[a_0^{(1)},\dots,a_{d_1-1}^{(1)},y_0,\dots, y_{d_1-1}]/(f_0,\dots, f_{d_1-1}).$$
Next we describe 
the homomorphism of $K$-algebras $\iota_1^* : A^{(1)} \to
\cO_\Omega(\Omega)$ such that
$\iota^{(1)}: \Omega \longrightarrow {\rm Res}_{V_{L_1}/V} \, C_{V_{L_1}}$ is obtained as the composition of
$\iota_1: \Omega \to \Spec A^{(1)}$ with the open inclusion  $\Spec A^{(1)} \subseteq {\rm Res}_{V_{L_1}/V} \, C_{V_{L_1}}$. 
Recall that the morphism 
$\A_K^d \overset{r} \longrightarrow  \A_K^{n+1}$
corresponds to a homomorphism
$$r^* : K[\ell_0,\dots, \ell_n] \longrightarrow K[m_0,\dots, m_{d-1}].$$
In the case $d$ even, polynomials $h_0,\dots,h_{n}\in K[m_0,\dots, m_{d-1}]$ 
are defined by the relation 
$$m_0+m_1x+\dots+m_{d-1}x^{d-1}+x^d= (h_0+\dots+h_{n}x^n+x^{n+1})^2-
(r^*(\ell_0)+\dots+r^*(\ell_n) x^n).$$
The multiplication morphism
${\mathbb A}_K^d \overset{\mu } \longrightarrow {\mathbb A}_K^d$
corresponds to a homomorphism
$$ \mu^* : K[m_0,\dots, m_{d-1}]\longrightarrow K[m_0^{(1)},\dots,m_{d_1-1}^{(1)}, \dots, m_0^{(t)},\dots,m_{d_t-1}^{(t)}].$$
Define polynomials $Y_0, \dots, Y_{d_1-1} \in K[m_0^{(1)},\dots,m_{d_1-1}^{(1)}, \dots, m_0^{(t)},\dots,m_{d_t-1}^{(t)}]$
such that $Y_0+Y_1x + \dots + Y_{d_1-1}x^{d_1-1}$ is the remainder of the Euclidean division of the image of $h_0+\dots+h_{n}x^n+x^{n+1}$
by the polynomial $m_0^{(1)}+\dots+m_{d_1-1}^{(1)}x^{d_1-1} +x^{d_1}$.

The morphism
${\mathbb A}_K^d \overset{f } \longrightarrow {\mathbb A}_K^d$
corresponds to a homomorphism
$$ f^* :  K[m_0^{(1)},\dots,m_{d_1-1}^{(1)}, \dots,
m_0^{(t)},\dots,m_{d_t-1}^{(t)}] \longrightarrow
K[a_0^{(1)},\dots,a_{d_1-1}^{(1)}, \dots, a_0^{(t)},\dots, a_{d_t-1}^{(t)}].$$
The homomorphism $\iota_1^*$ is then defined as follows:
$$
\begin{array}{rcl}
\iota_1^*(z)&:= & (r\circ \mu \circ f)^*(z), \ {\rm for \ all \ } z\in \cO_V(V),\\
\iota_1^*(a_i^{(1)})&:= & a_i^{(1)}, \\
\iota_1^*(y_i)&:= &  f^*(Y_i).\\
\end{array}
$$

Note that the canonical homomorphism $K[a_0^{(1)}, \dots,
a_{d_1-1}^{(1)}]\to A^{(1)}$ induces a morphism $\Spec A^{(1)}\to \A^{d_1}_K$
whose pre-composition with $\iota_1: \Omega \to \Spec A^{(1)}$ is nothing but the projection 
$\Omega\subset\A^d_K\to \A^{d_1}_K$. This implies that 
$\iota:=\iota^{(1)}\times \dots \times \iota^{(t)}: \Omega\to \Res_{V_R/V}C_{V_R}$ is injective (it is actually an
immersion, but we do not need this fact).
\end{emp}

\begin{emp} \label{N-tors}
Now that the factorization $\Omega \longrightarrow {\rm Res}_{V_R/V} \, C_{V_R}  \longrightarrow V$ has been defined, 
we proceed as follows. 
The tautological family $C \to V$ of hyperelliptic curves introduced in \ref{orbits}  
 comes equipped with a natural $V$-cover $C \to {\mathbb P}^1_V$ of degree $2$.
The choice of the coordinate function $x$ on ${\mathbb P}^1_V$ determines a section $\infty: V \to {\mathbb P}^1_V$, 
and we denote by $D$ the divisor on $C$ obtained as the pull-back of the section $\infty$ under the double cover $C \to {\mathbb P}^1_V$.

Let $J/V$ denote the Jacobian of $C/V$, and 
denote by $J[N]/V$ the kernel of the multiplication-by-$N$ on
$J$. Then $J[N]$ is a closed subscheme of $J$, finite over
$V$. The   divisor $D$ on $C/V$ 
induces a finite $V$-morphism 
$\varphi_D : C\to J$ of degree at most $2^{2g}$. 
The natural morphism $ C_{V_R} \to J_{V_R}$ obtained by base change
from $\varphi_D$  induces a finite $V$-morphism 
$$\varphi:  {\rm Res}_{V_R/V} \, C_{V_R} \longrightarrow  {\rm Res}_{V_R/V} \, J_{V_R},$$
and  ${\rm Res}_{V_R/V} \, J_{V_R}$ contains as a closed subscheme the scheme  $F_N:={\rm Res}_{V_R/V} \, J[N]_{V_R}$.
Consider the composition 
$$\Omega \longrightarrow {\rm Res}_{V_R/V} \, C_{V_R} \longrightarrow  {\rm Res}_{V_R/V} \, J_{V_R}.$$

{\it We claim that the the image of $\Omega$ in ${\rm Res}_{V_R/V} \, J_{V_R}$
does not lie in the closed subset $F_N$.} Indeed, assume {\it ab absurdo} that it does. Then, since $\Omega  \longrightarrow
{\rm Res}_{V_R/V} \, C_{V_R}$ is injective 
and since $F_N \to V$ is  a finite morphism, we find that the composition 
$$\Omega \longrightarrow {\rm Res}_{V_R/V} \, C_{V_R} \longrightarrow {\rm Res}_{V_R/V} \, J_{V_R} \longrightarrow V$$
is quasi-finite. Since $\Omega \subseteq {\mathbb A}_K^{d}$ and $V \subseteq {\mathbb A}_K^{n+1}$ are two schemes of unequal dimension
and the morphism $\Omega \to V$ is dominant, we obtained a contradiction. Thus, the image of $\Omega$ is not contained in $F_N$.

Let $U_N$ denote the  complement of 
$F_{N!}$ in ${\rm Res}_{V_R/V}  \, J_{V_R}$. Let $U''$ denote the preimage in $\Omega$ of the open subscheme $U_N$.
The open subscheme $U''$ is  dense in $\Omega$, and we can apply the argument of \ref{orbits} to $U''$ instead of $\Omega$.
Hence, we can find a subset $S$ of  $U''(K)$ which is infinite and maps bijectively onto its image.
The hyperelliptic curves associated to the elements of $S$ are pairwise non-isomorphic over $\overline{K}$, as desired. 
Each such hyperelliptic curve $X/K$ has a new point $P_i \in X(L_i)$  for $i=1,\dots, t$. In addition,
since $S \subset U''(K)$, the image of at least one $P_i$ under the natural $L_i$-morphism $X_{L_i} \to {\rm Jac}(X_{L_i})$ given by the divisor of degree $2$ `at infinity' cannot be a torsion point of order dividing $N!$. Thus the order of this image is bigger than $N$, or the image has infinite order.
This completes the  proof of Theorem \ref{pro.boundgeneral}, since Parts (c) and (d)  follow immediately from our construction.
\qed
\end{emp}
 
\begin{remark} \label{rem.weakermethod}
Let $K$ be a field with $\mathrm{char}(K)\neq 2$.  Let
  $L/K$ be a finite separable extension of degree $d$.
When $K$ is infinite, or is finite with $|K|$ large enough, we show below 
the existence of a curve $X/K$ of low genus $g$ with a new point over $L$ and a $K$-rational point. The method used here is much more elementary
than the method  in Theorem \ref{pro.boundgeneral}, and also has the advantage of being completely explicit, but the bound on $g$ obtained with this method is slightly weaker
than the bound in Theorem \ref{pro.boundgeneral}.  

Choose an element $\alpha \in L$ with $L=K(\alpha)$. 
Let $m(x)\in K[x]$ denote the minimal (monic) polynomial of $\alpha$ over $K$.
Let $f(x) \in K[x]$ be a 
monic non-zero multiple of $m(x)$ of even degree $2n \geq d$ such that $f(x)$ is not a square in $K[x]$
and such that $f'(x) \neq 0$ (e.g., $f(x)=x^{2n-d}m(x)$). 
Lemma \ref{lem.decomp} lets us  find $h(x) \in K[x]$ monic of degree $n$ such that 
 $ \ell(x):= h(x)^2 -f(x) $ is of degree at most $n-1$. The curve defined by $y^2=\ell(x)$ has the new point $(\alpha, h(\alpha))$, and if it has positive genus,  there is nothing to do. 

Assume then that $y^2=\ell(x)$ defines a curve of genus $0$.
 For any $ c \in K^*$, 
consider the possibly singular curve defined by  the equation
 $$y^2 = \ell(x) + 2  h(x)c + c^2.$$
 By construction, this curve has a new point in $X(L)$, namely $(\alpha, h(\alpha)+c)$.
It is possible to show using only elementary arguments that there exists $c \in K^*$ such that $\ell(x) + 2  h(x)c + c^2$ has only simple roots in $\overline{K}$, 
 and such that $2c$,  the leading term of $\ell(x) + 2  h(x)c + c^2$, is a square in $K$. Then the projective curve $X/K$ associated
 with the above affine equation is a smooth hyperelliptic curve of genus $g = \lfloor(n-1)/2 \rfloor$,
and it has a $K$-rational point at infinity.  (This bound for $g$ is not as good as the bound achieved in Theorem \ref{pro.boundgeneral}
when ${\rm deg}(h(x))$ is odd.) We leave the details to the reader.
\end{remark}

\end{section}

\begin{section}{A refinement} \label{sec.refinement}

When the extension $L/K$ is of a special form, it is possible to further refine some of the bounds given in Theorem \ref{pro.boundgeneral}. 
 
\begin{proposition} \label{pro.bound} Let $K$ be any infinite field
  with $\mathrm{char}(K) \neq 2$. Let $L/K$ be a  separable extension
  of degree $d:=ke$, 
with $d \geq 9$, $k \geq 2$, and $e \geq 3$.
Assume that $L$ contains a subextension $L_0/K$ of degree $e$ over $K$
such that $L=L_0(\sqrt[k]{\beta_0})$ for some $\beta_0 \in L_0^*$
and some root $\sqrt[k]{\beta_0} \in L$ of $x^k-\beta_0$. Fix an integer $N>1$.
Then there exist hyperelliptic curves $X/K$ 
such that the following are true.
\begin{enumerate}[\rm (a)]
\item  $X/K$ is given by an affine equation   $y^2=\ell(x^k)$ with
$\deg \ell(x^k)  = (d-k)/2$   
if $e$ is odd, and
$\deg \ell(x^k)  =  (d-2k)/2$ 
if $e$ is even. In addition,  $\ell(x^k)$ is separable.
\item  $X(L)$ contains a special new point $P$, and if 
$e$ is odd, $X$ also has a $K$-rational point.
\item There exists a finite $K$-morphism $\varphi : X\to \Jac(X)$ of degree at most $2^{2g}$
 such that 
the image under $\varphi$ of each of the $d$ conjugates of $P$ 
is of order larger than $N$ or has infinite order in ${\rm Jac}(X)(L)$. 
\end{enumerate}
Moreover, if 
$e\geq 5$, then there exist infinitely many such
curves, pairwise non-isomorphic over $\overline{K}$.
\end{proposition}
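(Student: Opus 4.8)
The plan is to mimic the construction in the proof of Theorem \ref{pro.boundgeneral}, but carried out on the subvariety of $\A^d_K$ parametrising the elements of $L$ of the special shape needed to produce an equation in $x^k$ rather than in $x$. The key observation is that since $L = L_0(\sqrt[k]{\beta_0})$ with $[L_0:K]=e$, any element $\gamma \in L_0$ with $L_0 = K(\gamma)$ gives rise to $\beta := \gamma \cdot \sqrt[k]{\beta_0} \in L$ whose minimal polynomial over $K$ is obtained from the minimal polynomial $m_\gamma(x)$ of $\gamma$ over $K$ by a substitution that replaces $x$ by $x^k$ up to a twist by $\beta_0$; more precisely, $N_{L/K}$-type considerations show that $\prod_{\sigma}(x - \sigma(\beta))$, the product over the $d$ embeddings $\sigma:L \hookrightarrow \overline{K}$, is (up to sign and a power of $\beta_0$) of the form $G(x^k)$ where $\deg G = e$. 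I would first make this substitution precise: set up the analogue of the map $f_\alpha$ of \eqref{mor} but using the basis of $L$ adapted to the tower $K \subset L_0 \subset L$, restricted to the affine subspace $\A^e_K \hookrightarrow \A^d_K$ cut out by the condition $\beta \in L_0 \cdot \sqrt[k]{\beta_0}$, obtaining a finite surjective morphism $\A^e_K \to \A^e_K$ sending (coordinates of) $\gamma$ to (coefficients of) $G$.

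Next I would feed this into the decomposition lemma. Since $\deg G = e$, applying Lemma \ref{lem.decomp} to $G(x)$ when $e$ is even (resp.\ to $xG(x)$ when $e$ is odd) writes it as $h(x)^2 - \ell(x)$ with $\deg h = \lceil e/2 \rceil$ and $\deg \ell \le \lceil e/2 \rceil - 1$. Substituting $x \mapsto x^k$ then yields $G(x^k) = h(x^k)^2 - \ell(x^k)$ (resp.\ $x^k G(x^k) = h(x^k)^2 - \ell(x^k)$), so the curve $y^2 = \ell(x^k)$ contains the special point $(\sqrt[k]{\beta_0}\cdot{\rm(stuff)}, h(\dots))$ — one checks the degrees: $\deg \ell(x^k) = k(\lceil e/2\rceil - 1)$, which equals $(d-k)/2$ when $e$ is odd and $(d-2k)/2$ when $e$ is even, as in (a). The hypothesis $d \ge 9$, $k\ge 2$, $e\ge 3$ guarantees this degree is at least $3$, so we really get a hyperelliptic curve of positive genus; and when $e$ is odd the substitution $x\mapsto x^k$ on $xG(x) = h(x)^2-\ell(x)$ forces $\ell(0) = h(0)^2$, giving the $K$-rational point of (b). As in \ref{composition}, I restrict to the dense open subset of the target $\A^{\lceil e/2\rceil}_K$ where $\ell$ is separable of full degree (so $\ell(x^k)$ is separable, using that $k$ is invertible is not needed — separability of $\ell(x^k)$ follows from $\ell$ separable together with $\ell(0)\ne 0$, which I also impose), and pull back along the composite; intersecting with the open locus where $\gamma$ generates $L_0$ (hence $\beta$ generates $L$, since $[L:L_0]=k$ is coprime to nothing in particular but $L=L_0(\beta/\gamma)$... here one uses that $K(\beta) \supseteq L_0$ because $\beta^k/\beta_0 \in$ — rather, $\gamma = $ a ratio recoverable from $\beta$) gives the parameter space $\Omega$ for (a) and (b).

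For part (c), the torsion statement, I would run exactly the Weil-restriction machinery of \ref{proof-b}–\ref{N-tors}: form the tautological family $C \to V$ over the relevant open $V \subseteq \A^{\lceil e/2\rceil}_K$, base change to $R = L$ (here $t=1$, so $R$ is a field), form ${\rm Res}_{V_L/V}\,C_{V_L}$, check that $\Omega \to {\rm Res}_{V_L/V}\,C_{V_L}$ is injective by the same explicit-coordinates argument, and then note that the image in ${\rm Res}_{V_L/V}\,J_{V_L}$ cannot land in the $N!$-torsion subscheme because $\dim \Omega \ge \dim V$ fails to match a quasi-finite map to $V$ — here I need $\dim \Omega = e > \lceil e/2 \rceil = \dim V$, which holds precisely because $e \ge 2$; but the claim about all $d$ conjugates requires a small extra argument, namely that the $d$ points $(\sigma(\beta), h(\sigma(\beta)^k))$ for the $d$ embeddings $\sigma$ are Galois-conjugate over $K$, so once one of them is non-torsion of order $> N$ they all are. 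Finally, for the last sentence — infinitely many curves when $e \ge 5$ — I invoke the moduli argument of \ref{orbits}: when $e \ge 5$ we have $\deg \ell(x^k) = k(\lceil e/2\rceil -1) \ge 2\cdot 2 = 4$, so the fibres have genus $\ge 2$ (or $=1$ handled by the $j$-map), and one must verify that the resulting curves $y^2 = \ell(x^k)$ still vary in a positive-dimensional family in $M_{g,K}$ — I expect this to be the main obstacle, since the curves are now constrained to have an extra automorphism (the $\mu_k$-action $x \mapsto \zeta_k x$), so one cannot quote the generic-fibre argument of \ref{orbits} verbatim; instead I would argue that the $j$-invariant (or a suitable modular invariant) of the family already takes infinitely many values by an explicit computation on the open locus, exactly as the references \ref{genus.small} and \ref{lem.semistable} do for the unconstrained case, and conclude by choosing an infinite subset $S \subseteq \Omega(K)$ mapping injectively to moduli. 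This completes the proof modulo the routine verifications of the degree bookkeeping in (a) and the explicit coordinate descriptions in the Weil-restriction step.
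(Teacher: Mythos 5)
Your route is essentially the paper's: parametrize by $\gamma\in L_0$, send $\gamma$ to the degree-$e$ characteristic polynomial $G$ of $\gamma^k\beta_0$ over $K$ (so that the characteristic polynomial of $\beta=\gamma\sqrt[k]{\beta_0}$ is exactly $G(x^k)$ --- no sign or $\beta_0$-twist is needed), apply Lemma \ref{lem.decomp} to $G$ or $xG$, substitute $x\mapsto x^k$, and then run the Weil-restriction/dimension count of \ref{proof-b}--\ref{N-tors} and a non-constancy-in-moduli argument. The genuine gap is in your verification of (b): you restrict to the locus where $\gamma$ generates $L_0$ and assert that then $\beta=\gamma\sqrt[k]{\beta_0}$ generates $L$; this implication is false, and your parenthetical attempt to recover $\gamma$ (or $L_0$) from $\beta$ cannot work. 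Concretely, if $\beta_0=c\gamma^{-k}$ with $c\in K^*$ (compatible with all hypotheses, e.g.\ $e=k=3$, $L_0$ cubic with generator $\gamma$, $c=2$), then $\gamma$ generates $L_0$ but $\beta$ is a root of $x^k-c$ and has degree at most $k<d$ over $K$; moreover one checks that the open conditions you impose on $\ell$ (full degree, separable, $\ell(0)\neq0$) are satisfied in such examples, so they do not rescue newness. The condition you actually need --- and the one the paper imposes via the Weil restriction $s$ of $\gamma\mapsto\gamma^k\beta_0$ and the pullback $s^{-1}(U)$ of the generator locus --- is that $\gamma^k\beta_0$ generates $L_0$: then $K(\beta)\supseteq K(\beta^k)=L_0$, hence $K(\beta)\supseteq L_0(\beta)=L_0(\sqrt[k]{\beta_0})=L$. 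This is again a dense open condition on $\gamma$ (preimage under the finite surjective $s$ of the complement of the hyperplanes $H_F$), so the repair is local, but as written the newness step fails.

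Two smaller points. First, your aside that separability of $\ell(x^k)$ ``does not need $k$ invertible'' is wrong: if $\mathrm{char}(K)=p$ divides $k$, then $\ell(x^k)$ has identically vanishing derivative and is never separable; what saves the statement is that separability of $L/L_0$ forces $p\nmid k$, which is exactly the paper's remark, so keep that hypothesis in the argument. Second, for the infinitude when $e\geq5$ you correctly flag that the constrained family $y^2=\ell(x^k)$ (with its $\mu_k$-symmetry) needs its own non-constancy argument and you leave it as an expectation; this is not an open obstacle but a citation: Lemma \ref{lem.semistable} applied to the subfamilies $y^2=x^{nk}+\lambda x^k+1$ (or $y^2=x^{nk}+\lambda x^{2k}+1$ when $\mathrm{char}(K)$ divides $n-1$) shows the map to $M_{g,K}$, resp.\ the $j$-map for $g=1$, is non-constant, after which your selection of an infinite set of parameters mapping injectively to moduli goes through. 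Your handling of (c) --- injectivity of the map to the Weil restriction, the dimension count $e>\lceil e/2\rceil$, and the observation that $\varphi_D$ is defined over $K$ so all $d$ conjugate images have the same order --- matches the paper and is fine.
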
 
 
\begin{proof} 
Let  $m_0(x)\in K[x]$ denote the minimal polynomial of $\beta_0$ over $K$.
The minimal polynomial of $\sqrt[k]{\beta_0}$ over $K$ is $m_0(x^k)$. 
Let $t(x):=x$ if $e=\deg m_0(x)$ is odd and $t(x):=1$ otherwise. 
Lemma~\ref{lem.decomp} lets us   write
$$t(x)m_0(x)=h(x)^2-\ell(x)$$ 
for some appropriate $h(x),\ell(x) \in K[x]$.
The curve $X/K$ defined by the affine equation 
$y^2=\ell(x^k)$ 
has the new $L$-point $(\sqrt[k]{\beta_0},h(\beta_0))$ and, when $e$ is
odd, it also has the $K$-rational point $(0,h(0))$. 

When $p={\rm char}(K)>0$, our hypothesis that $L/L_0$ is separable implies that $p$ does not divide $k$.
Thus, for $\ell(x^k)$ to have positive degree and distinct roots, 
we only need $\ell(x)$ to have positive degree and distinct non-zero roots.  The polynomial $\ell(x)$ depends on the choice of $\beta_0 \in L_0$, 
and it may happen that $y^2=\ell(x)$ has genus $0$. 
When this is the case, we can pick $\gamma \in L_0^*$, and let $\beta'_0:=\beta_0\gamma^k$. 
Define $\sqrt[k]{\beta_0'}:=\gamma\sqrt[k]{\beta_0}$, with $\sqrt[k]{\beta_0'}$ a root of $x^k-\beta_0'$. Then we still have  $L=L_0(\sqrt[k]{\beta_0'})$,
and we can consider as above the new curve $y^2=\ell(x)$ associated with $\beta_0'$.

We use below this simple observation to show that there is one (and infinitely many  when $e \geq 5$) curve $X/K$ which satisfy conditions (a) and (b) of the proposition.
 
We start by considering the morphism 
$\A^1_{L_0}\longrightarrow \A^1_{L_0}$, with $\gamma\mapsto \gamma^k \beta_0$,  and denote its
Weil restriction to $K$ by 
$$ s :  {\mathbb A}^e_K \longrightarrow {\mathbb A}^e_K.$$ 
On $K$-rational points, $s$ maps 
$(x_0,\ldots,x_{e-1}) $ to $(y_0,\ldots,y_{e-1})$, 
where 
$$y_0+y_1\beta_0+\dots+y_{e-1}\beta_0^{e-1}=
(x_0+x_1\beta_0+\dots+x_{e-1}\beta_0^{e-1})^k\beta_0.$$
This is a finite surjective morphism.  
We obtain a composition
$$\A^{e}_K \overset{s} \longrightarrow  \A^{e}_K  \overset{f_{\beta_0}} \longrightarrow  \A^{e}_K \overset{r}\longrightarrow \A^{n+1}_K, $$
where $n:=\deg(t(x)m_0(x))/2-1\in \mathbb N$, and the last two maps are
described in \ref{def.f} and \ref{def.g}, respectively. This
composition is surjective. On $K$-rational points, it maps 
$\gamma\in L_0$ to $\ell(x)\in K[x]$ such that the characteristic 
polynomial of $\gamma^k \beta_0$ over $K$ is equal to 
$h(x)^2-\ell(x)$ with appropriate $h(x)$ and $\ell(x)$ as in Lemma~\ref{lem.decomp}.  

Our argument now is very similar to the arguments given in the proof of Theorem \ref{pro.boundgeneral}. 
We will only sketch the main steps below, leaving the detailed verification to the reader. 
Let $V$ denote the open subset of the target space $\A^{n+1}_K$
corresponding to the polynomials $\ell(x)$  such that $\ell(x)$ has
degree $n$ and has distinct non-zero roots.
Let $U$ denote the open subset of $\A^{e}_K=\Res_{L_0/K}\A^1_{L_0}$ corresponding to the elements $\delta \in L_0$ with $L_0=K(\delta)$.
Let $W\subseteq \A^{e}_K $ be the intersection of 
$s^{-1}(U)$ with the preimage of $V$ by the above composition $r\circ f_{\beta_0} \circ s$. 
Then $W$ is open and non-empty, and any rational point 
$\gamma\in W(K)\subseteq L_0$ corresponds to an $\ell(x)$ of degree $n$ with 
distinct non-zero roots in $\overline{K}$. As $e\ge 3$, 
any such $\gamma$ gives rise to an affine equation 
$y^2=\ell(x^k)$ defining a hyperelliptic curve satisfying 
properties (a) and (b). 

To prove Part (c), fix an $N\ge 2$. Let $\A^{n+1}_K=\Spec K[\ell_0, \dots, \ell_n]$. Consider the universal
hyperelliptic curve $C$ over $ V\subseteq \A^{n+1}_K$ defined by the equation 
$$ y^2=\ell_nx^{nk}+\ell_{n-1}x^{(n-1)k}+\cdots + \ell_1 x^k + \ell_0.$$
We can restrict $W$ further 
as in \ref{N-tors}, 
so that that the images by $\varphi_D$ of the 
$K$-conjugates of the point 
$P=(\gamma \sqrt[k]{\beta_0}, h(\gamma^k\beta_0))$ have 
order larger than $N$ for $\gamma\in W(K)$. 

It remains to prove that our family of hyperelliptic curves is
infinite when $e\ge 5$. This amounts to showing that the natural 
morphisms $V\to M_{g,K}$ (if $g\ge 2$) and $j : V\to \A^1_K$ (taking $j$-invariant)
when $g=1$ (see \ref{orbits}), are non-constant. This is equivalent
to saying that over $\overline{K}$, for any given $n, k\ge 2$,  $k$ prime to
$\mathrm{char}(K)$, the family of hyperelliptic curves defined by 
$$ y^2=a_{n}x^{nk}+\cdots + a_1 x^k + a_0, \quad 
(a_0, \dots, a_n)\in V(\overline{K})$$ 
is non-constant. This statement follows from Lemma \ref{lem.semistable}, 
which we apply to the subfamily $ y^2= x^{nk}  + \lambda x^k + 1$
if $n-1$ is prime to ${\rm char}(K)$, and to the subfamily $ y^2= x^{nk}  + \lambda x^{2k} + 1$
when ${\rm char}(K)$ divides $n-1$.
\end{proof}

\begin{remark} When $e=3$ or $4$, we have $\deg \ell(x)=1$ and  the curves appearing in  Proposition \ref{pro.bound} (a) are of the form $y^2=ax^k+b$ and are thus
 isomorphic over $\overline{K}$. 
\end{remark}

\begin{corollary} \label{cor.elliptic} 
Let $K$ be an infinite field with $\text{\rm char}(K) \neq 2$. 
Let $L/K$ be a separable extension of degree $d $, with $ 1 \leq d
\leq 10$.
Fix an integer $N\geq 2$. 
\begin{enumerate}[{\rm (1)}] 
\item 
There exist infinitely many elliptic curves $X/K$  with distinct
$j$-invariants and with a new point in $X(L)$, which
is either of order bigger than $N$  or of infinite order in $X(L)$.
\item 
When $K$ is a global field, these new points can be chosen to be of infinite
order.
\end{enumerate}
\end{corollary}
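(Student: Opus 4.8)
The plan is to deduce the corollary from the preceding results by treating separately the ranges of $d$ where a direct application of Theorem \ref{pro.boundgeneral} suffices (giving an \emph{elliptic} curve of genus $1$) and the single remaining value $d=10$, where Theorem \ref{pro.boundgeneral} only produces genus $\geq 2$ and one must work harder. For $1\le d\le 9$, I would set $L_1:=L$ and $L_2=\dots=L_{10-d}:=K$, so that $\sum_i[L_i:K]=10-d+d\cdot 1$; wait—more carefully, one chooses $t$ copies so that the total degree $d':=\sum_i[L_i:K]$ lands in the window $7\le d'\le 10$ with $d'\equiv 2\pmod 4$, i.e.\ $d'=10$, forcing $q=2$, $j=2$, hence genus $q-1=1$ by part (c). Theorem \ref{pro.boundgeneral}(a),(c) then gives infinitely many genus-$1$ hyperelliptic curves $X/K$, pairwise non-isomorphic over $\overline K$, with a special new point $P\in X(L)$ and (by part (d), since $d'=10\equiv 2\pmod 4$ is of the form $4q+2$) actually also $K$-rational points—so $X$ is in fact an elliptic curve over $K$. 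Distinctness of $j$-invariants follows from the pairwise non-isomorphism over $\overline K$ together with the genus-$1$ case of \ref{orbits}, where the map $j:V\to\A^1_K$ is used to extract an infinite subset of $\Omega(K)$ mapping bijectively onto its image. Part (b) of Theorem \ref{pro.boundgeneral}, applied with $F=L$ and the canonical embedding $L_1=L\hookrightarrow L$, gives a $K$-morphism $\varphi:X\to\Jac(X)$ of degree $\le 4^g=4$ whose composition with $P\in X(L)$ has order $>N$ or infinite order in $\Jac(X)(L)$; since $X$ is elliptic we may identify $X$ with $\Jac(X)$ via a chosen $K$-rational point, and translate $\varphi(P)$ to land near $P$, concluding that $P$ itself (or a point $L$-conjugate to it, which is also new over $L$) has order $>N$ or infinite order. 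This proves (1) for $d\le 9$.

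For $d=10$ one cannot route through Theorem \ref{pro.boundgeneral} directly, since $d=10$ gives $q=2$, $j=2$, genus $1$—actually that does work: $d=10\equiv 2\pmod 4$, so $q=2$, genus $q-1=1$. So in fact the argument above handles $1\le d\le 10$ uniformly: take $t=\max(0,7-d)$ extra copies of $K$ when $d<7$, and when $7\le d\le 10$ take $d'=d$ directly if $d\equiv 2\pmod 4$ (i.e.\ $d=10$) and otherwise pad up to the next degree $\equiv 2\pmod 4$ that is still $\ge 7$; one must check this padding never overshoots in a way that spoils genus $1$. Concretely: $d=7\Rightarrow$ pad to $d'=10$; $d=8\Rightarrow d'=10$; $d=9\Rightarrow d'=10$; $d=10\Rightarrow d'=10$; and for $d\le 6$, pad to $d'=10$ as well. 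In every case $d'=10$, genus $=1$, and part (d) yields the $K$-rational point. The one subtlety is that the new point we control in $X(L)$ is a special point with respect to the degree-$2$ map $\pi:X\to\P^1_K$, and for $d=10$ Corollary \ref{thm.bound}(2) only claims a new point (not necessarily special)—so I should not over-claim; the statement of the corollary only asks for a new point, which is exactly what we get.

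The step I expect to be the main obstacle is the passage in (2) from "order $>N$ or infinite order" to "infinite order" when $K$ is a global field. The idea is to invoke the Mordell--Weil theorem: for a fixed elliptic curve $X/K$, the torsion subgroup $X(L)_{\mathrm{tors}}$ is finite, and over a global field the torsion is uniformly bounded as $X$ varies—by Merel's theorem (and its global-field analogues, e.g.\ via Kamienny--Merel-type bounds depending only on $[L:\Q]$, or the function-field analogue) there is a constant $N_0=N_0([L:K])$ such that $|X(L)_{\mathrm{tors}}|\le N_0$ for every elliptic curve $X/K$. Taking $N=N_0$ in part (1), the resulting new point $P\in X(L)$, having order $>N_0$ or infinite order, cannot be torsion, hence has infinite order. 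The delicate point is quoting the correct uniform-boundedness input: in the number-field case this is Merel's uniform boundedness theorem, while for $K$ a function field over a finite field one needs the analogous statement—which holds and is classical in that setting (the relevant bound on torsion of elliptic curves over function fields of bounded degree over $\F_q(t)$). I would cite these and remark that, since the $j$-invariants are pairwise distinct, no finite set of curves is being excluded, so infinitely many examples survive. An alternative, avoiding Merel entirely, is to note that one may instead pick for each curve in our infinite family a large auxiliary $N$ exceeding the (a priori finite, curve-dependent) torsion bound; but this only gives, for each curve, \emph{either} a point of large order \emph{or} one of infinite order, and does not by itself force infinite order without a uniform bound—so the uniform-boundedness input seems genuinely needed, and that is where the real content of (2) lies.
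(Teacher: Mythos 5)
Your plan has two genuine gaps, both at exactly the places where the paper has to work hardest. First, the case $d=10$: you claim that part (d) of Theorem \ref{pro.boundgeneral} supplies a $K$-rational point when the total degree is $10$, but part (d) says nothing when $d\equiv 2 \pmod 4$ (it only covers $d=4q$, $d$ odd, and $d=4q+3$). For $d\le 9$ your padding by copies of $K$ does give rational points (via part (a), not part (d)), but for $d=10$ there is no padding and your construction only yields a genus-$1$ curve that may have no $K$-rational point at all, hence need not be an elliptic curve. The paper repairs this by pushing the new point into $\Jac(X)$ under the degree-$4$ map and splitting into two cases: if $L$ has no subfield of degree $5$, the image of the degree-$10$ point still has degree $10$ on the Jacobian; if $L$ does contain a degree-$5$ subfield $L_0$, one must instead invoke Proposition \ref{pro.bound} (writing $L=L_0(\sqrt{\beta_0})$) to get curves $y^2=ax^4+bx^2+c$ with a rational point. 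None of this is in your proposal, and without it $d=10$ is not proved.

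Second, the order statement. Theorem \ref{pro.boundgeneral}(b) only guarantees that the image of \emph{at least one} of the points $P_1,\dots,P_t$ has order $>N$ or infinite order. Since you pad to degree $10$ with copies of $K$ for every $d\le 9$, that distinguished point may well be one of the $K$-rational padding points, which tells you nothing about the new $L$-point; your sentence asserting that part (b) applies to $P$ itself is an over-claim. This is precisely why the paper pads with copies of $L$ whenever possible ($d=2,3,4,5$, so that every $P_i$ is a new $L$-point), and why $d=6$ --- where padding by $K$ is unavoidable --- requires the dedicated trick of running the theorem with $(N!)^2$ and replacing $P$ by the new point $P+Q$ when $P$ has small order. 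Your ``translate $\varphi(P)$ to land near $P$'' step is also not an argument: for $\deg\ell=4$ the map $\varphi_D$ is multiplication by $2$ composed with a translation, and the paper controls this carefully (e.g.\ by choosing the origin and asking for order $\ge 4N$ in the Jacobian). Finally, in part (2) your appeal to a uniform torsion bound over function fields is not correct as stated: Levin's theorem requires the $j$-invariant to be transcendental over the constant field, and the paper has to use the distinctness of the $j$-invariants to extract an infinite non-isotrivial subfamily. The number-field half of (2) (Merel plus taking $N$ large) does match the paper.
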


\begin{proof} (1) The corollary follows from Theorem \ref{pro.boundgeneral} when $d \leq
9$ and $d\ne 5, 6$. 
Indeed, for $d=1$, use  $L_i=K$ for $i=1,\dots, 8$, so that $\sum_i [L_i:K]=8$.
For $d=2$, use $L_i=L$ for $i=1,\dots, 4$, so that $\sum_i [L_i:K]=8$. 
For  $d=3$, use $L_i=L$ for $i=1,2,3 $, so that $\sum_i [L_i:K]=9$.
For  $d=4$, use $L_i=L$ for $i=1,2 $, so that $\sum_i [L_i:K]=8$.
For  $d=7,8,9$, use $L_1=L$.

For the case $d=5$, we use  $L_i=L$ for $i=1,2 $, so that $\sum_i
[L_i:K]=10$,  
and consider the infinitely many curves $X/K$ of genus $1$ given by 
Theorem \ref{pro.boundgeneral}. If the curve is elliptic, it thus has a point of degree $5$.
Assume then that the curve does not have a $K$-rational point.
Each such curve has then two new points over $L$ such
that the image  of one of the two points 
under a $K$-morphism $X \to {\rm Jac}(X)$ of degree $4$ has order bigger
than $N$ or has infinite order. 
Since the morphism is of degree $4$ and the point has degree $5$,
we find that  the image  in the Jacobian  of a point of degree $5$ in $X$
still has degree $5$ in the Jacobian. As the curves $X$ are pairwise non-isomorphic
over $\overline{K}$, the same is true for their Jacobians $\Jac(X)$.

Assume now that $d=10$ and that $L$ does not have a subextension of degree $5$
over $K$. Consider $X/K$ of genus $1$ with a new point over $L$ and the $K$-morphism $X\to \Jac(X)$ of degree $4$ 
provided by Theorem \ref{pro.boundgeneral}. The image of any new point of degree $10$ has degree $10$ or $5$ over $K$.
By assumption on $L$, the second case is excluded and therefore we obtain a new point of degree $10$ in 
$\Jac(X)$.

When $d=10$ and  $L/K$ has a subextension $L_0$ of degree $5$,
we apply  Proposition \ref{pro.bound}. In this case we obtain infinitely many curves of the form $y^2=ax^4+bx^2+c$ with a $K$-rational point, 
a new point $P$ over $L$, 
and a morphism of degree $4$ to their Jacobians. Each curve also has a $K$-rational point $Q=(0,\sqrt{\ell(0)})$, and we endow $X$ with the group structure that has $Q$ as origin.
We consider the divisor $D:=Q$, so that the morphism $\varphi_D: X \to {\rm Pic}^0(X)$ is a morphism of elliptic curves (it sends $Q$ to the origin of ${\rm Pic}^0(X)$).
In order to have the point $P$ have order at least $N$ on the elliptic curve $(X,Q)$, we apply Proposition \ref{pro.bound} in the case where the points obtained in the Jacobians
have order at least $4N$.

Assume now that $d=6$.  Theorem \ref{pro.boundgeneral} (applied in the case where $L_1=L$ and $L_2=K$, and with the integer $(N!)^2$) shows that there exist infinitely many elliptic curves $X/K$ with a new point $P \in X(L)$ 
and a   point $Q \in X(K)$ such that the order of either $P$ or $Q$ is bigger than $(N!)^2$. Indeed, Theorem \ref{pro.boundgeneral} produces curves of genus $1$ with such points $P$ and $Q$, and we endow these curves, given by an equation $y^2=\ell(x)$, with the structure of elliptic curves by picking the point at infinity as the origin of the group law. Then the induced morphism $\varphi: X \to {\rm Jac}(X)$ is a morphism of group schemes, and if the image of either 
$P$ or $Q$ in ${\rm Jac}(X)(L)$ has order at least $(N!)^2$ or is of infinite order, then the same holds for $P$ or $Q$.
If the order of $P$ is at most $N$, then consider the point $P+Q$. It is clear that $P+Q$ is also a new point in $X(L)$ since if it were defined over a smaller extension $M \subset L$, then the point $P=(P+Q)-Q$ would also be defined over $M$, a contradiction. If the order of $P+Q$ were also at most $N$, then the
order of $Q=(P+Q)-P$ would   divide the product of the orders of $P$ and $P+Q$, and thus would be a divisor of $(N!)^2$. This would be a contradiction since when the order $P$ is at most $N$, the order of $Q$ is larger than $(N!)^2$.

(2) Assume that $K$ is a number field. Then  Merel \cite{Mer} shows that
there exists an integer $N$, depending only on $L$, such that if $X/L$ is any elliptic curve
with an $L$-rational torsion $P$, then the order of $P$ is at most
$N$. It follows from 
the first statement of the corollary that there exist
infinitely many elliptic curves $E/K$ with a new point $P$ over $L$ of 
order bigger than $N$ or of infinite order. Hence, such $P$ has infinite order.

Assume now that  $K/k$ is the function field of a smooth 
connected curve over a finite field $k$.
Then  Levin \cite{Lev}, Theorem 1, shows that there exists an integer $N$ such that if $X/L$ is any elliptic curve
with an $L$-rational torsion $P$ and whose $j$-invariant $j(X) $ is not algebraic over $k$, then the order of $P$ is at most $N$.
Note that in our field $L$, there are only finitely many elements which are algebraic over $k$ by hypothesis. 
Therefore, any infinite family of elliptic curves over $L$ with distinct $j$-invariants has an infinite subfamily with $j$-invariants transcendental over $K$.
It follows from the first statement   
of the corollary  that there exist
infinitely many elliptic curves $E/K$ with a new point $P$ over $L$ of
order 
bigger than $N$ or of infinite order.  Hence, such $P$ has infinite order.
\end{proof}
\begin{remark}
One may wonder whether it is possible to strengthen Theorem \ref{pro.boundgeneral} and Corollary \ref{cor.elliptic}, over 
number fields for instance, 
to show that given a separable extension $L/K$ of degree at most $10$, there exist infinitely many elliptic curves $X/K$ 
with a new point $P$ in $X(L)$ {\it and  trivial torsion subgroup} in $X(K)$.
\end{remark}

\begin{corollary} \label{cor.infinitetelymanycurves} Let $K$ be a number field.
For each odd integer $g \geq 1$, there exists a separable extension $L/K$ with $[L:K] = 6(g+1)$ such that there exist
infinitely many smooth projective geometrically connected curves $X/K$ of genus $g$, pairwise non-isomorphic over $\overline{K}$ and with a new point over $L$.
\end{corollary}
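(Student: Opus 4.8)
The plan is to obtain the corollary as an instance of Proposition~\ref{pro.bound}, choosing the Kummer data so that the hyperelliptic curves it produces have genus exactly $g$ while the ambient field has degree exactly $6(g+1)$. Recall that Proposition~\ref{pro.bound} takes as input a separable extension $L/K$ of degree $d = ke$ containing a subextension $L_0/K$ of degree $e$ with $L = L_0(\sqrt[k]{\beta_0})$, and outputs hyperelliptic curves $y^2 = \ell(x^k)$ with $\ell(x^k)$ separable of degree $(d-2k)/2$ when $e$ is even; such a curve, in the sense of Convention~\ref{conv}, has genus $\lfloor((d-2k)/2 - 1)/2\rfloor$. I would set $e := 6$ and $k := g+1$, so $d = ke = 6(g+1)$. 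Then $e$ is even, $(d-2k)/2 = (6(g+1) - 2(g+1))/2 = 2(g+1)$, and the output curves are smooth projective geometrically connected (hyperelliptic) curves of genus $\lfloor (2(g+1) - 1)/2 \rfloor = g$, as wanted. The hypotheses of Proposition~\ref{pro.bound} are all met: $g \geq 1$ gives $d = 6(g+1) \geq 12 \geq 9$, $k = g+1 \geq 2$, $e = 6 \geq 3$, and the part of the proposition producing \emph{infinitely many} curves pairwise non-isomorphic over $\overline{K}$ applies because $e = 6 \geq 5$.

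It remains to exhibit, over the given number field $K$, one separable extension $L/K$ of degree exactly $6(g+1)$ admitting the Kummer structure above with $e = 6$ and $k = g+1$; I would take a single radical extension. Pick a nonzero prime ideal $\mathfrak{p}$ of $\mathcal{O}_K$ lying over an odd rational prime, and pick $\beta \in \mathfrak{p}\setminus\mathfrak{p}^2$, so that $v_{\mathfrak{p}}(\beta)=1$. Put $d := 6(g+1)$ and $L := K(\sqrt[d]{\beta})$. Since $v_{\mathfrak{p}}(\beta)=1$ is divisible by no prime, $\beta$ is not a $p$-th power in $K$ for any prime $p$; and since $v_{\mathfrak{p}}(4)=0$, we get $v_{\mathfrak{p}}(-4c^4)\equiv 0 \pmod 4$ for every $c\in K^*$, hence $\beta\notin -4(K^*)^4$ — a point that matters precisely because $g$ odd forces $g+1$ even, so $4\mid d=6(g+1)$. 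By the classical criterion for irreducibility of $x^n-a$, the polynomial $x^d-\beta$ is irreducible over $K$, so $[L:K]=d=6(g+1)$, and $L/K$ is separable since $\mathrm{char}(K)=0$. Writing $\beta_0 := (\sqrt[d]{\beta})^{g+1}$, so that $\beta_0$ is a root of $x^6-\beta$, and $L_0 := K(\beta_0)$, we get $[L_0:K]\leq 6$; since moreover $L = L_0(\sqrt[d]{\beta})$ with $\sqrt[d]{\beta}$ a root of $x^{g+1}-\beta_0$, we have $[L:L_0]\leq g+1$, and comparison with $[L:K]=6(g+1)$ forces $[L_0:K]=6$, $[L:L_0]=g+1$, and $L = L_0(\sqrt[g+1]{\beta_0})$.

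Feeding $L\supset L_0\supset K$, together with $\beta_0$, $k=g+1$, $e=6$, into Proposition~\ref{pro.bound} then produces infinitely many hyperelliptic curves $X/K$, pairwise non-isomorphic over $\overline{K}$, each of genus $g$ by the count above and each carrying a new (special) point in $X(L)$; since hyperelliptic curves are smooth projective geometrically connected, this is exactly the assertion of the corollary. I do not expect a genuine obstacle here: the only real work is the parameter bookkeeping that simultaneously forces $\deg_x\ell(x^k)$ (via the even-$e$ branch of Proposition~\ref{pro.bound}) to make the genus equal to $g$ and keeps $[L:K]$ equal to $6(g+1)$, together with the standard verification that $x^{6(g+1)}-\beta$ is irreducible — the one slightly delicate case of that criterion, namely $4\mid 6(g+1)$, being exactly the case produced by $g$ being odd. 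The non-isotriviality needed for the "infinitely many, pairwise non-isomorphic" conclusion is already supplied by Proposition~\ref{pro.bound} itself once $e\geq 5$, so nothing further is required.
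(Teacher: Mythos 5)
Your proposal is correct and takes essentially the same route as the paper: apply Proposition \ref{pro.bound} with $e=6$, $k=g+1$, $d=6(g+1)$, using the even-$e$ degree count to get genus exactly $g$ and $e\geq 5$ for the infinitely many pairwise non-isomorphic curves. The only difference is that you explicitly construct $L$ as the pure radical extension $K(\sqrt[6(g+1)]{\beta})$ with $\beta$ a uniformizer at a prime over an odd rational prime, verifying irreducibility of $x^{6(g+1)}-\beta$ (including the $4\mid n$ case), whereas the paper simply posits suitable $L_0$ and $L=L_0(\sqrt[g+1]{\beta})$, noting that the number field hypothesis is used only to guarantee their existence.
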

\proof
 Let $L_0=K(\beta)$ be an extension of degree $6$. Let $L:=L_0(\sqrt[g+1]{\beta})$ and assume that $[L:L_0]=g+1$.
Apply Proposition  \ref{pro.bound} to the extension $L$ and its subextension $L_0/K$
to find infinitely many pairwise non-isomorphic hyperelliptic curves $X/K$ of genus $g$ of the form $y^2=ax^{2g+2}+bx^{g+1}+c$ with a new point over $L$. \qed 

\smallskip
The hypothesis in \ref{cor.infinitetelymanycurves} that $K$ is a number field is only used to justify that $K$ indeed has extensions of the form $L$ and $L_0$.
When ${\rm char}(K)>2$, Corollary \ref{cor.infinitetelymanycurves} is strengthened in \ref{curves}. It is also strengthened below when $g=1$.

\begin{proposition} \label{cor.extension15-16} 
Let $K$ be an infinite field. When ${\rm char}(K) \neq 2$, assume that 
$L/K$ is an abelian extension of degree $12$, $14$, $15$, $20$, $21$, or $30$.
When ${\rm char}(K) = 2$, assume that 
$L/K$ is an abelian extension of degree $12$, $14$, $15$, or $20$.
Then there exist infinitely many elliptic curves $E/K$ with pairwise distinct $j$-invariants and  with a new point over $L$.
\end{proposition}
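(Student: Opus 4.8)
The plan is to write $d$ as a product $a_1\cdots a_r$ of pairwise coprime integers with $\sum_i a_i$ as small as possible, to produce with Theorem \ref{pro.boundgeneral} a single genus-$1$ curve carrying new points of each of these degrees, and then to assemble a degree-$d$ point by adding them on the associated elliptic curve. Since $G:=\mathrm{Gal}(L/K)$ is abelian, a coprime factorization $d=a_1\cdots a_r$ yields $G=\prod_i G_i$ with $|G_i|=a_i$, and setting $L_i:=L^{\prod_{j\ne i}G_j}$ gives Galois subextensions $L_i/K$ of degree $a_i$ with $\mathrm{Gal}(L/K)=\prod_i\mathrm{Gal}(L_i/K)$, $L=L_1\cdots L_r$, and $L_i\cap\big(\prod_{j\ne i}L_j\big)=K$. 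I would use the factorizations $12=3\cdot4$, $14=2\cdot7$, $15=3\cdot5$, $20=4\cdot5$ (each with $\sum a_i\le 9$), and $21=3\cdot7$, $30=2\cdot3\cdot5$ (with $\sum a_i=10$). In characteristic $2$ only the degrees $12,14,15,20$ appear, which matches the expectation that the analogue of Theorem \ref{pro.boundgeneral} in Section \ref{char2} produces genus-$1$ curves only when the total degree is at most $9$.

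Applying Theorem \ref{pro.boundgeneral} (or its characteristic-$2$ analogue) to $L_1,\dots,L_r$, and using that $\sum a_i\in\{7,8,9,10\}$, I obtain infinitely many pairwise $\overline K$-non-isomorphic genus-$1$ curves $X/K$, each carrying distinct new points $P_i\in X(L_i)$ and a $K$-morphism $\varphi:X\to\Jac(X)$ of degree at most $4$. When $\sum a_i\in\{7,8,9\}$, part (d) of Theorem \ref{pro.boundgeneral} provides an additional $K$-rational point, so $X$ is already an elliptic curve $E$ with $\varphi$ an isomorphism; for $d=21$ and $d=30$ I instead set $E:=\Jac(X)$. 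Choosing geometric representatives $\overline P_i$ of the $P_i$ and using that $L_i/K$ is Galois, one has $\varphi(\overline P_i)\in E(L_i)$, so $Q:=\sum_i\varphi(\overline P_i)$ lies in $E(L)$.

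Next I would show $K(Q)=L$. For $(\sigma_i)_i\in\prod_i\mathrm{Gal}(L_i/K)=\mathrm{Gal}(L/K)$, the element $(\sigma_i)_i$ fixes $Q$ if and only if $\sum_i\big(\sigma_i\varphi(\overline P_i)-\varphi(\overline P_i)\big)=0$; since the $i$-th summand lies in $E(L_i)$, the relations $L_i\cap\prod_{j\ne i}L_j=K$ together with $E(M_1)\cap E(M_2)=E(M_1\cap M_2)$ force every summand into $E(K)$ by an elementary induction. Hence, as soon as each $\varphi(\overline P_i)$ has degree exactly $a_i$ over $K$ and no nontrivial difference $\sigma\varphi(\overline P_i)-\varphi(\overline P_i)$ is $K$-rational, the only possibility is $(\sigma_i)_i=\mathrm{id}$, so $Q$ has $d$ distinct conjugates, $K(Q)=L$, and the closed point of $E$ below $Q$ is a new point over $L$. (In the cases $\sum a_i\le 9$ the first half of this requirement is automatic, since $\varphi$ is an isomorphism and $P_i$ is new over $L_i$; for $d=21,30$ the only factors that could drop under $\varphi$ are the primes $2$ and $3$, the factor $5$ or $7$ being too large to collapse through a degree-$\le4$ map.)

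The final point is to arrange the genericity condition of the previous paragraph for infinitely many members of the family, and to deduce that the corresponding elliptic curves have infinitely many $j$-invariants. The latter is immediate, because over $\overline K$ a genus-$1$ curve is isomorphic to its Jacobian, so pairwise $\overline K$-non-isomorphic curves have pairwise distinct $j(\Jac(\cdot))$. For the former, ``$\varphi(\overline P_i)$ has degree $a_i$ and no nontrivial conjugate-difference of it is $K$-rational'' is a Zariski-open condition on the parameter space over which the family of Theorem \ref{pro.boundgeneral} is constructed (for a prime $a_i$ with generator $\sigma$ it forbids $(\sigma-1)\varphi(\overline P_i)$ from being $\mathrm{Gal}(L_i/K)$-invariant), and since that family dominates a positive-dimensional moduli variety, its $K$-points in this open locus remain infinite and yield non-isomorphic curves; alternatively, the order bound in Theorem \ref{pro.boundgeneral}(b), applied over $F=L$ with $N$ large, rules out the degenerate behaviour directly. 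I expect this verification --- that the Mordell--Weil group $E(K)$ cannot absorb a conjugate-difference of the new points and thereby collapse the degree of $Q$ --- to be the one genuinely delicate step; everything else is bookkeeping with the product decomposition of $\mathrm{Gal}(L/K)$ and the conclusions of Theorem \ref{pro.boundgeneral}.
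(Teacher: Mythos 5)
Your coprime decompositions ($12=3\cdot4$, $14=2\cdot7$, $15=3\cdot5$, $20=4\cdot5$, $21=3\cdot7$, $30=2\cdot3\cdot5$) and the idea of adding new points of coprime degrees on an elliptic curve are exactly the paper's, but the step on which everything hinges --- that $Q=\sum_i\varphi(\overline P_i)$ is new over $L$ --- is not actually proved. You reduce it to two genericity conditions (each $\varphi(\overline P_i)$ generates $L_i$, and no nontrivial difference $\sigma\varphi(\overline P_i)-\varphi(\overline P_i)$ lies in $E(K)$) and then assert that they hold for infinitely many members of the family because they are Zariski-open and the family dominates a positive-dimensional moduli space. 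Openness alone gives nothing: you must also show the good locus is non-empty, i.e.\ that the bad conditions do not hold identically on the family, and you give no argument for this; the analogous non-degeneracy step in the proof of Theorem \ref{pro.boundgeneral}(b) needed a genuine dimension argument, which does not transfer here since the locus of points $P$ with $\sigma P-P\in E(K)$ is not finite over the base. Your fallback is also incorrect: a new point $P_i$ of large or infinite order can perfectly well satisfy $\sigma P_i-P_i\in E(K)$ (take $P_i=R+T$ with $R\in E(K)$ of infinite order and $T$ a suitable new torsion point), so invoking Theorem \ref{pro.boundgeneral}(b) over $F=L$ with $N$ large does not rule out the degenerate behaviour.

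The paper closes exactly this gap without any genericity, via Proposition \ref{pro.composition}: if $P\in E(L_1)$ and $Q\in E(L_2)$ are new over linearly disjoint Galois extensions and $P+Q$ were not new over $L_1L_2$, then $\sigma(P)-P=Q-\tau(Q)$ would be a nontrivial point of $E(K)$ of finite order $m=\mathrm{ord}(\sigma)=\mathrm{ord}(\tau)>1$ dividing the exponents of both Galois groups; since your factors are coprime, $e=1$ and this is impossible, unconditionally. (Your computation that each summand $D_i$ lies in $E(K)$ is the first half of this; what is missing is the order comparison that makes the coprimality bite, and it also disposes of the three-factor case $30=2\cdot3\cdot5$.) Moreover, for $d=21$ and $30$ you still have to exclude that the degree-$2$ and degree-$3$ points collapse under the degree-$\le 4$ map to the Jacobian; the paper avoids this entirely by observing that a genus-$1$ curve with closed points of coprime degrees has index $1$, hence a $K$-rational point by Riemann--Roch, so it is already an elliptic curve and one adds the points $P_i$ on the curve itself. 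Finally, in characteristic $2$ your appeal to ``the analogue of Theorem \ref{pro.boundgeneral}'' needs the precise references: Theorem \ref{pro.boundgeneral2} gives genus $1$ only for total degree $7$ or $8$ (degrees $12$ and $15$), while degrees $14$ and $20$, with sums equal to $9$, require Theorem \ref{thm.d=9}.
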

\proof  Assume first that $[L:K] \neq 21$ or $30$. 
In all remaining cases, $[L:K]=p^aq^b$ for some distinct primes $p$ and $q$, and $a, b \geq 1$. Since $L/K$ is abelian,
we can find two abelian subextensions of $L$, 
$L_1/K$ and $L_2/K$  of degrees $p^a$ and $q^b$ respectively, such that $L=L_1L_2$.  

We have seen in \ref{pro.boundgeneral} that when ${\rm char}(K) \neq 2$ and $p^a+q^b +1 \leq 10$, there exist infinitely many elliptic curves $E/K$
with pairwise distinct $j$-invariants and with a new point $P$ over $L_1$ and a new point $Q$ over $L_2$. The same result
is proved in Theorem \ref{pro.boundgeneral2} when ${\rm char}(K) = 2$ when $[L:K]=15 $ (use $3+4=7$ and $3+5=8$), and in 
Theorem \ref{thm.d=9} in the other two cases (use $2+7=4+5=9$).  Then 
the statement of the proposition follows immediately from Proposition \ref{pro.composition} below, in the case $e=1$.

Assume now that ${\rm char}(K) \neq 2$  and that $[L:K]= 21$ or $30$. In this case, since $3+7=2+3+5=10$, 
\ref{pro.boundgeneral} shows that there exist infinitely many curves $E/K$ of genus $1$
with pairwise distinct $j$-invariants and with new points of degrees $3$ and $7$, and degrees $2$, $3$, and $5$, respectively.
Since a genus $1$ curve $E/K$ with points of coprime degrees has a $K$-rational point, we find that the curves of genus $1$ are in fact elliptic curves.
Then the statement of the proposition follows immediately from \ref{pro.composition} below, in the case $e=1$.
\qed

\smallskip
Our next proposition shows that 
two new points on an elliptic curve can often be used to construct a third new point in a larger extension.

\begin{proposition} \label{pro.composition}
Let $K$ be any field, and let $\overline{K}$ denote an algebraic closure of $K$.  
Let $L_1/K$ and $L_2/K$ be two extensions contained
in $\overline{K}$ such that $L_1 \not\subseteq L_2$ and
$L_2\not\subseteq L_1$. 
Let $L:=L_1 L_2$. Let $E/K$ be an elliptic curve (or any abelian
variety) with new points $P\in
E(L_1)$ over $L_1$ and $ Q \in E(L_2)$ over $L_2$. Denote by $K(P+Q)$
the subfield of $L$ generated by the coordinates of the point $P+Q \in
E(L)$. Then $K(P+Q)$ cannot be contained in either $L_1$ or $L_2$.
If moreover $L_1$ and $ L_2$ are linearly disjoint over $K$, then
$[K(P+Q):K]\ge \max\{ [L_1: K], [L_2: K]\}$. 

Assume now that $L_1/K$ and $L_2/K$ are Galois and linearly disjoint, and let $e(L_i/K)$ denote the exponent of the Galois group of $L_i/K$.
Set $e:=\gcd(e(L_1/K), e(L_2/K))$.  
If  $E(K)$ does not contain any non-trivial torsion point of order dividing $e$ (this always holds if $e=1$),
then the point $P+Q \in E(L)$ is a new point over $L$.

When $E(K)$ contains a non-trivial torsion point of prime order $p$ and $K$ is a number field, then there exists a finite set $S$ of  Galois extensions  
such that if $L_1$ and $ L_2$ are distinct cyclic extensions of degree $p$ and $L_1, L_2 \notin S$, then $P+Q \in E(L)$ is a new point over $L$.
\end{proposition}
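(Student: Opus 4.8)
The plan is to prove the four assertions in turn, working throughout with the point $R := P+Q \in E(L)$ and exploiting the Galois action. First I would establish that $K(R)$ is contained in neither $L_1$ nor $L_2$: if $K(R)\subseteq L_1$, then since $Q = R - P$ and both $R$ and $P$ have coordinates in $L_1$, the point $Q$ would be defined over $L_1$; but $Q$ is a new point over $L_2$, so $K(Q) = L_2$, forcing $L_2 \subseteq L_1$, contrary to hypothesis. The symmetric argument rules out $K(R)\subseteq L_2$. For the degree bound when $L_1$ and $L_2$ are linearly disjoint, I would use that $L = L_1 L_2$ has degree $[L_1:K][L_2:K]$ over $K$, so the subfield $K(R)$, being a subfield of $L$ not contained in $L_1$ (resp.\ not in $L_2$), must surject onto the quotient $\mathrm{Gal}$-set in a way that gives $[K(R):K] \ge [L_1:K]$ (resp.\ $\ge [L_2:K]$); concretely, if some $\sigma \in \mathrm{Gal}(\overline K/K)$ fixes $R$ but acts nontrivially on $L_1$, run the previous argument with $\sigma$ to get a contradiction, hence every element of $\mathrm{Gal}(\overline K/L_1)\cap\mathrm{Gal}(\overline K/K(R))$-complement considerations force the stated inequality. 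I expect this part to be routine Galois theory.

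The heart of the proposition is the third assertion. Assume $L_1/K$ and $L_2/K$ are Galois and linearly disjoint, so $\mathrm{Gal}(L/K) \cong \mathrm{Gal}(L_1/K)\times \mathrm{Gal}(L_2/K)$. Suppose $R = P+Q$ is \emph{not} new over $L$, i.e.\ $K(R) =: M$ is a proper subfield of $L$. Then there is a nontrivial $\sigma = (\sigma_1,\sigma_2) \in \mathrm{Gal}(L/K)$ fixing $R$, so $\sigma(P) + \sigma(Q) = P + Q$, giving $\sigma(P) - P = Q - \sigma(Q)$. Now $P \in E(L_1)$ and $L_1/K$ is Galois, so $\sigma(P) - P$ depends only on $\sigma_1$ and lies in $E(L_1)$; similarly $Q - \sigma(Q) \in E(L_2)$ depends only on $\sigma_2$. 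Since $L_1 \cap L_2 = K$, the common value $T := \sigma(P)-P = Q - \sigma(Q)$ lies in $E(L_1)\cap E(L_2) = E(K)$. One then checks that $\sigma_1$ must have order dividing $e(L_1/K)$ as an element of the cyclic subgroup it generates, and iterating $\sigma_1$ on the relation $\sigma_1(P) = P + T$ gives $\sigma_1^n(P) = P + nT$ for all $n$; taking $n$ to be the order of $\sigma_1$ shows $(\mathrm{ord}\,\sigma_1)\cdot T = 0$, so $T$ is a torsion point of $E(K)$ whose order divides $e(L_1/K)$. The same argument with $\sigma_2$ shows $\mathrm{ord}(T) \mid e(L_2/K)$, hence $\mathrm{ord}(T) \mid \gcd(e(L_1/K),e(L_2/K)) = e$. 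Since $\sigma$ is nontrivial, at least one $\sigma_i$ is nontrivial; if $\sigma_1 \ne 1$ and $T = 0$ then $\sigma_1(P) = P$ means $P \in E(L_1^{\langle\sigma_1\rangle})$ is defined over a proper subfield of $L_1$, contradicting that $P$ is new over $L_1$ — so $T \ne 0$, a nontrivial torsion point of order dividing $e$ in $E(K)$, contrary to hypothesis. (If instead $\sigma_1 = 1$ and $\sigma_2 \ne 1$, argue symmetrically.) This contradiction shows $R$ is new over $L$. I expect this step — correctly tracking that $\sigma(P)-P$ is a ``coboundary'' depending only on the $L_1$-component, and that it lands in $E(K)$ — to be the main obstacle, mostly in getting the bookkeeping of the exponents clean.

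For the last assertion, suppose $E(K)$ contains a torsion point $T_0$ of prime order $p$, and take $L_1, L_2$ distinct cyclic degree-$p$ extensions of $K$. Running the argument above, the only way $R$ can fail to be new over $L$ is if the nontrivial element $\sigma_1 \in \mathrm{Gal}(L_1/K)$ (necessarily of order $p$) satisfies $\sigma_1(P) = P + T$ with $T \in \langle T_0\rangle \setminus \{0\}$, and likewise $\sigma_2(Q) = Q - T$. The relation $\sigma_1(P) = P + T$ with $T \in E(K)$ of order $p$ forces a specific constraint linking $P$, the cocycle $\sigma_1 \mapsto T$, and the extension $L_1$: since $\sigma_1$ has order $p$ and $pT = 0$, one finds $P$ generates, together with a $p$-torsion point of $E(\overline K)$, a configuration whose field of definition is pinned down. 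The point is that for a \emph{fixed} elliptic curve $E/K$ with the given torsion, only finitely many cyclic degree-$p$ extensions $L_1$ can arise this way — essentially because the $p$-torsion $E[p]$ is a fixed finite group scheme over $K$, and the possible $L_1$ are contained in the (finite, since $K$ is a number field and we may bound ramification) set of degree-$p$ subextensions of $K(E[p])$ and its twists. Collecting these over all the finitely many relevant possibilities gives the finite exceptional set $S$; if $L_1, L_2 \notin S$, no such $\sigma$ exists and $R$ is new over $L$. I would phrase this using that a cocycle class in $H^1(\mathrm{Gal}(L_1/K), E[p])$ realizing the obstruction has bounded ramification, so only finitely many $L_1$ occur.
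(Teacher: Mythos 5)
Your handling of the central assertion (the one involving $e$) is correct and essentially the paper's own argument: a nontrivial element of $\mathrm{Gal}(L/K)=\mathrm{Gal}(L_1/K)\times\mathrm{Gal}(L_2/K)$ fixing $P+Q$ gives $T:=\sigma(P)-P=Q-\sigma(Q)\in E(L_1)\cap E(L_2)=E(K)$, nonzero because $P$ (or $Q$) is new, and iterating shows $\mathrm{ord}(T)$ divides $\gcd(e(L_1/K),e(L_2/K))$; the first containment claim also matches the paper. But two steps have genuine gaps. For the degree bound, your ``concrete'' justification is false: an element of $\mathrm{Gal}(\overline{K}/K)$ can fix $P+Q$ while acting nontrivially on $L_1$ --- that is precisely the phenomenon the rest of the proposition studies (when $\sigma(P)-P$ is a rational torsion point), so no contradiction arises from such a $\sigma$. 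Nor does ``$K(P+Q)$ contained in neither $L_1$ nor $L_2$'' by itself imply $[K(P+Q):K]\ge\max\{[L_1:K],[L_2:K]\}$: inside $G_1\times G_2$ there are subgroups containing neither factor whose index is smaller than $|G_1|$ (e.g.\ $G_1=G_2=(\mathbb{Z}/2)^2$ and $H=\{(a,b):a_1=b_1\}$). The correct argument, which is the paper's, is the compositum identity you nearly wrote down: since $P=(P+Q)-Q$, one has $L_1\subseteq K(P+Q)\,L_2$, hence $K(P+Q)\,L_2=L$, and linear disjointness gives $[K(P+Q):K]\ge [L:K]/[L_2:K]=[L_1:K]$, and symmetrically for $L_2$.

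For the last assertion your sketch is both vague and, in its one precise claim, incorrect: the possible $L_1$ are \emph{not} contained in the degree-$p$ subextensions of $K(E[p])$ (``and its twists'' is undefined); $K(P)$ need not meet $K(E[p])$ at all. What the relation $\sigma_1(P)=P+T$ actually yields --- and what the paper extracts from its intermediate claim $[m]P\in E(L_1^{\langle\sigma\rangle})$ with $L_1^{\langle\sigma\rangle}=K$ --- is that $[p]P\in E(K)$ and $[p]Q\in E(K)$; the paper then quotes the finiteness, over a number field, of the extension of $K$ generated by all points $R$ with $[p]R\in E(K)$ (Lang), and takes $S$ to be the finitely many subfields of that extension. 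Your alternative ``bounded ramification'' route can be made rigorous: the image of $P$ in $E/\langle T\rangle$ is $K$-rational, so $L_1$ is cut out by a homomorphism $\mathrm{Gal}(\overline{K}/K)\to\langle T\rangle\cong\mathbb{Z}/p$ coming from descent along the isogeny with kernel $\langle T\rangle$; such classes are unramified outside the primes of bad reduction, the primes above $p$, and the archimedean places, and Hermite--Minkowski then bounds the degree-$p$ fields $L_1$ that can occur. But none of these ingredients (the correct cohomology group, the unramifiedness lemma at good primes away from $p$, the finiteness theorem for fields of bounded degree and ramification) is stated or proved in your proposal, so as written this part is a gap rather than a proof.
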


\proof 
Let $L_3:=K(P+Q)$. 
The equality $(P+Q)+(-Q)=P$ shows that $L_1 \subseteq L_2L_3$. 
In particular, if $L_3\subseteq L_2$, then we find that $L_1 \subseteq
L_2$, which is a contradiction. Since $L_1 \subseteq L_2L_3$, we find that $L_3L_2=L$.
If $L_1$ and $ L_2$ are linearly disjoint,
then $[L_1:K][L_2:K]=[L:K]\le [L_3:K][L_2:K]$, so $[L_1:K]\le [L_3:K]$. 

When both $L_1/K$ and $L_2/K$ are Galois and linearly disjoint, $L/K$ is a Galois extension with Galois group $G$ which we identify with ${\rm Gal}(L_1/K) \times {\rm Gal}(L_2/K)$.
Assume that $K(P+Q) \neq L$. Then there exists 
$\sigma \in {\rm Gal}(L_1/K) \times \{{\rm id}\}$, with $\sigma \neq ({\rm id},{\rm id})$, and 
$\tau \in  \{{\rm id}\}  \times {\rm Gal}(L_2/K)$, with $\tau \neq ({\rm id},{\rm id})$,  such that 
$\sigma\tau(P+Q)=P+Q$.  
By hypothesis, $\sigma(Q)=Q$, $\tau(P)=P$. It follows that $\sigma(P)+\tau(Q)=P+Q$, so that $\sigma(P)-P= Q -\tau(Q)$. 
Since $\sigma(P)-P \in E(L_1)$ and $Q -\tau(Q) \in E(L_2)$, we find that $T:=\sigma(P)-P= Q -\tau(Q)$ is in fact in $E(K)$.
Since by assumption $P$ is a new point over $L_1$, we have $\sigma(P) \neq P$, so that $T$ is not trivial.

We claim that ${\rm ord}(\sigma)={\rm ord}(\tau)$. Indeed, for any positive power $d$, we still have $\sigma^d\tau^d(P+Q)=P+Q$, 
which implies that $\sigma^d(P)-P= Q -\tau^d(Q)$. If $\sigma^d \neq {\rm id}$, we find that $\sigma^d(P)\neq P$, so that $\tau^d \neq {\rm id}$.
Since the argument is symmetric in $\sigma $ and $\tau$, our claim follows.

Let then $m:={\rm ord}(\sigma)={\rm ord}(\tau)$,  and let $[m]:E \to E$ denote the multiplication-by-$m$ morphism. 
{\it We claim that   $[m]P \in E(L_1^{\langle \sigma \rangle})$ and $ [m]Q\in E(L_2^{\langle \tau \rangle})$, and that $T \in E(K)$ is a torsion point of order $m$.} 
Indeed, $\sigma^i(P)-P \in E(K)$ since $\sigma^i(P)-P =\sigma^{i-1}(T)+\dots+\sigma(T)+T=[i]T$.  
By construction, $\sum_{i=1}^m \sigma^i(P) \in E(L_1^{\langle \sigma \rangle})$. 
Hence, $[m]P=\sum_{i=1}^m \sigma^i(P)- \sum_{i=1}^{m-1} (\sigma^i(P)-P)$ is also in $E(L_1^{\langle \sigma \rangle})$.
Moreover, we have $P=\sigma(P) -T$, so that $[m]P=\sigma([m]P)-[m]T$, and since $\sigma([m]P)=[m]P$, we find that $[m]T$ is trivial.
To see that $T$ has order $m$, recall that $\sigma^d(P)-P= [d]T$, so that if $[d]T$ is trivial, then so is $\sigma^d$.

 Assume now that $L_1$ and $L_2$
are cyclic of order $p$. It follows from the above discussion that if $K(P+Q) \neq L$, then $[p]P \in E(K)$ and $[p]Q \in E(K)$.
When $K$ is a number field, we can use the following fact:
if $m$ is any integer and $M$ denotes the smallest extension that contains the fields of definition of all points $R$ such that $[m]R \in E(K)$, then the extension $M/K$ is finite
(see e.g., \cite{Lang}, Chapter 6, Propositions 1.1 and 2.3).
 \qed 

\begin{remark} It is conjectured in \cite{DFK}, 1.2, that if $p \geq 7 $ is prime and $E/{\mathbb Q}$ is an elliptic curve, then there exist only finitely many cyclic extensions 
$L/{\mathbb Q}$ of degree $p$ such that the rank of $E(L)$ is bigger than the rank of $E({\mathbb Q})$.
\end{remark}

\begin{example} Let $K$ be a field of characteristic different from $2$. 
Consider an elliptic curve $E/K$ given by  $y^2=x(x^2+a_2x+a_4)$.
Let $T:=(0,0)$. Let $F/K$ be a quadratic extension and let 
$\sigma$ be the generator of $\mathrm{Gal}(F/K)$. One can show that 
there exists $P\in E(\overline{K})$ such that $K(P)=F$ and 
\begin{equation} \label{Ps} 
P-\sigma(P)=T 
\end{equation} 
if and only if either $F=K(\sqrt{d})$ for some 
$d\in K$ such that $d(d^2-2a_2d+a_2^2-4a_4)$ is a square in $K$,
or $F=K(\sqrt{a_2^2-4a_4})$, in which case we set  $d:=0$ in the next sentence.
The point $P$ is then 
$$P=(\alpha, \ \pm \sqrt{d}\kern 1pt \alpha)$$ 
where $\alpha$ (possibly in $K$) is a root of $T^2+(a_2-d)T+a_4\in K[T]$. 
If $P$ and $ Q$ are two points satisfying \eqref{Ps} with $K(P)\ne K(Q)$, 
then by the computations in the proof of Proposition~\ref{pro.composition}, 
$K(P+Q)$ is a quadratic extension different from $K(P)$ 
and $K(Q)$. Note that $y^2=x(x^2-2a_2x+a_2^2-4a_4)$ is an explicit equation for the quotient $E/ \! \left< T\right>$.
\end{example}

\begin{example} Let $K$ be a field of characteristic different from $2$. 
Consider the elliptic curve $E/K$ given by  $y^2=x^3+ax^2+bx+c$. 
Assume that $x^3+ax^2+bx+c$ is irreducible in $K[x]$ and has a splitting field of degree $6$. Then the three non-trivial points $P_1,P_2$, and $P_3$ of order $2$ 
are defined over three  different (non-Galois) cubic extensions of $K$, and $P_1+P_2=-P_3$.
\end{example}

\end{section}
\begin{section}{Finiteness of the number of curves with a new point over $L$} \label{sec.CHM}

Fix an integer $g \geq 2$. In  \cite{CHM}, Theorem 1.2,  Caporaso, Harris, and Mazur,  show that
if the Strong Lang Conjecture holds, then 
there exists an integer $N(g)$ such that for any number field
$F$ there are only finitely many smooth projective curves of genus $g$ defined over $F$ with more
than $N(g)$ $F$-rational points. Let us note that the above statement implies the following.

\begin{emp} \label{CHM} {\it Fix an integer $g \geq 2$. If the Strong Lang Conjecture holds, then there exists an integer $c(g)$,
depending on $g$ only, such that given a number field $K$ and any finite collection of finite extensions $L_i/K$ (the extensions $L_i$ need not be distinct) with $\sum_{i=1}^t [L_i:K]  \geq c(g)$, 
then there exist only {\it finitely many} smooth proper geometrically connected curves $X/K$ of genus $g$ with 
distinct closed points $P_1, \dots, P_t$ in $X$ such that the residue field $K(P_i)/K$ is isomorphic to $L_i/K$ for $i=1,\dots, t$. }
\end{emp}

To prove the above statement, we show that in fact it is possible to take $c(g)=N(g)$. Indeed, suppose given a
smooth proper geometrically connected curve $X/K$ of genus $g$ with 
distinct closed points $P_1, \dots, P_t$ in $X$ such that the residue field $K(P_i)/K$ is isomorphic to $L_i/K$ for $i=1,\dots, t$.
Then over the smallest Galois extension $L$ of $K$ containing $L_1,\dots, L_t$, we find that $X(L)$ contains $\sum_{i=1}^t [L_i:K] $ points,
since each closed point $P_i$ corresponds to $[L_i:K]$ distinct points in $X(L)$. 
To conclude, we use the fact that given a curve $Y/K$ of genus $g \geq 2$, the isomorphism classes of curves $Y'/K$ which become $L$-isomorphic to $Y/K$ over a Galois extension $L/K$ are in bijection with the elements of the set $H^1({\rm Gal}(L/K), {\rm Aut}_{\overline{K}}(Y_{\overline{K}}))$, 
and that this set is finite since both ${\rm Gal}(L/K)$ and $ {\rm Aut}_{\overline{K}}(Y_{\overline{K}})$ are finite.

In fact, it turns out that the statement \ref{CHM}  is equivalent to the statement of Theorem 1.2 in \cite{CHM}. Indeed, assuming that
the statement \ref{CHM} holds,  consider a number field $F$ and a smooth curve $X/F$ of genus $g$ defined over $F$ with $t \geq c(g)$
$F$-rational points. Applying \ref{CHM} to the case $K=F$ and $L_i=K$ for all $i=1,\dots, t$, we find that the number of such curves $X/F$ must be finite.

Assuming that an integer $c(g)$ exists as in \ref{CHM}, there also  exists {\it an integer $d(g)$ such that given a number field $K$ and an extension $L/K$ with $[L:K]  \geq d(g)$, 
then there exist only {\it finitely many} smooth proper geometrically connected curves $X/K$ of genus $g$ with a
closed point $P$ in $X$ such that the residue field $K(P)/K$ is isomorphic to $L/K$.} It is clear by definition that 
the minimal such integer $d(g)$ is such that $d(g) \leq c(g)$.
It follows from Corollary \ref{cor.infinitetelymanycurves}  
that $d(g)>6(g+1)$. It is known that $N(g) \geq  16(g+1)$ (see, e.g., \cite{Cap}, Section 5, for a proof of this fact due to A. Brumer).

It is known that no analogue of the integer $N(g)$ can be defined in the function field setting \cite{CUV}. 
We show below using a variant on the example in \cite{CUV} that no analogue of the integer $d(g)$   can be defined either in the function field setting
in positive characteristic. 

\begin{proposition} \label{curves} 
Let $p>2$ be prime. Let $K$ be an infinite field with ${\rm char}(K)=p$.  Let $g \geq 1$ be  coprime to $p$.
Suppose that $d>2$ is an integer such that for some $m \geq 1$, $d$ divides $p^m+1$. Let $L/K$ be a Kummer extension  of degree $d$. 
When either $g$ or $d$ is odd, there exist infinitely many hyperelliptic curves 
of genus $g$  over $K$, pairwise non-isomorphic over $\overline{K}$,  with a new point over $L$ and a $K$-rational point.
\end{proposition}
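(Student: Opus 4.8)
The plan is to exploit the special arithmetic of the Kummer extension in characteristic $p$. Write $L=K(\beta)$ with $\beta^{d}=a\in K^{*}$. Since $d\mid p^{m}+1$, the scalar $c:=a^{(p^{m}+1)/d}$ lies in $K^{*}$ and $\beta^{p^{m}+1}=(\beta^{d})^{(p^{m}+1)/d}=c$, so that
$$\beta^{p^{m}}=c\,\beta^{-1}.$$
For a monic polynomial $h(x)=\sum_{i=0}^{e}h_{i}x^{i}\in K[x]$ of degree $e$, define its \emph{companion}
$$\widetilde h(x):=\sum_{i=0}^{e}h_{i}^{p^{m}}c^{i}\,x^{e-i}\in K[x],$$
a polynomial of degree $e$ with leading coefficient $h_{0}^{p^{m}}$ and constant term $c^{e}$. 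Because the $p^{m}$-power map is additive on $L$ and fixes $K$, one gets $\widetilde h(\beta)=\sum_{i}h_{i}^{p^{m}}c^{i}\beta^{e-i}=\beta^{e}\sum_{i}h_{i}^{p^{m}}(c\beta^{-1})^{i}=\beta^{e}h(\beta)^{p^{m}}$, and therefore
$$h(\beta)\,\widetilde h(\beta)=\beta^{e}\,h(\beta)^{1+p^{m}}.$$
This is the crucial identity: $1+p^{m}$ is even, so $h(\beta)^{1+p^{m}}$ is a square in $L$; moreover $\beta^{d+1}=a\beta$, and when $d$ is odd $d+1$ is even, so $a\beta=(\beta^{(d+1)/2})^{2}$ is also a square in $L$.

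Now put $e:=g$ and let $h$ range over the $K$-points of the affine space $\A_{K}^{g}$ of coefficients $h_{0},\dots,h_{g-1}$ of a monic $h$ of degree $g$; set
$$P_{h}(x):=\begin{cases} x\,h(x)\,\widetilde h(x), & g\ \text{odd},\\[3pt] a\,x\,h(x)\,\widetilde h(x), & g\ \text{even (so }d\text{ is odd by hypothesis)},\end{cases}$$
a polynomial in $K[x]$ of degree $2g+1$. For $h$ in a suitable dense open subscheme $U\subseteq\A_{K}^{g}$ the polynomials $h$ and $\widetilde h$ are separable, coprime and do not vanish at $0$; then $P_{h}$ is separable of degree $2g+1$, and $X_{h}\colon y^{2}=P_{h}(x)$ is a hyperelliptic curve of genus $g$ with a unique, $K$-rational, point at infinity. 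Setting
$$\gamma:=\begin{cases}\beta^{(g+1)/2}\,h(\beta)^{(p^{m}+1)/2}, & g\ \text{odd},\\[3pt]\beta^{(d+g+1)/2}\,h(\beta)^{(p^{m}+1)/2}, & g\ \text{even},\end{cases}$$
all exponents are integers (for $g$ odd, $g+1$ and $p^{m}+1$ are even; for $g$ even, $d+1$, $g$ and $p^{m}+1$ are even), so $\gamma\in L$, and the identity above together with $\beta^{d+1}=a\beta$ gives $\gamma^{2}=P_{h}(\beta)$. Hence $(\beta,\gamma)\in X_{h}(L)$, and the closed point of $X_{h}$ it defines has residue field $K(\beta,\gamma)=K(\beta)=L$; being of degree $d=[L:K]$ over $K$, it is a new point over $L$. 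Thus $X_{h}$ has both a new point over $L$ and a $K$-rational point.

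It remains to produce infinitely many such $X_{h}$, pairwise non-isomorphic over $\overline{K}$. As $K$ is infinite, $U(K)$ is infinite; the curves $X_{h}$ form a family $\mathcal{C}\to U$ inducing a $K$-morphism $U\to M_{g,K}$ if $g\ge 2$, resp. $U\to\A^{1}_{K}$ (the $j$-invariant) if $g=1$, and as in \ref{orbits} it suffices to show this morphism is non-constant, after which one extracts an infinite $S\subseteq U(K)$ mapping injectively to moduli. To check non-isotriviality one restricts to the one-parameter subfamily $h(x)=x^{g}+\lambda$, for which $\widetilde h(x)=\lambda^{p^{m}}x^{g}+c^{g}$ and $P_{h}$ is, up to a nonzero constant, $x(x^{g}+\lambda)(x^{g}+c^{g}\lambda^{-p^{m}})$, and applies Lemma \ref{lem.semistable} exactly as in the proof of Proposition \ref{pro.bound}; the hypothesis $\gcd(g,p)=1$ enters precisely here, guaranteeing that $y^{2}=x(x^{g}+\lambda)(x^{g}+\mu)$ is a non-isotrivial family as $\lambda$ varies.

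The two delicate points are the parity bookkeeping that forces $\gamma$ into $L$ — this is exactly where one must assume $d$ odd when $g$ is even, using that then it is $a\beta$, rather than $\beta$ itself, that is a square in $L$ — and the non-isotriviality of the families $y^{2}=x\,h(x)\widetilde h(x)$, which is less transparent than for a generic hyperelliptic family because $\widetilde h$ is a Frobenius-twisted reciprocal of $h$; the latter is handled by passing to the explicit subfamily above and invoking Lemma \ref{lem.semistable}.
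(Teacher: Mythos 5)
Your proof is correct and is essentially the paper's own construction in mildly greater generality: the paper also exploits $d\mid p^{m}+1$ to get $\theta^{p^{m}+1}\in K$, but fixes $h(x)=x^{g}+1$, uses the identity $\theta^{g}+a^{g}=\theta^{g}(\theta^{g}+1)^{p^{m}}$ to exhibit the new point $\bigl(\theta,\theta^{(g+1)/2}(\theta^{g}+1)^{(p^{m}+1)/2}\bigr)$ on $y^{2}=x(x^{g}+1)(x^{g}+a^{g})$, and produces the family by rescaling the Kummer generator $\theta\mapsto\gamma^{2}\theta$ rather than by varying $h$; your identity $h(\beta)\widetilde h(\beta)=\beta^{g}h(\beta)^{p^{m}+1}$ specializes to theirs at $h=x^{g}+1$, and your parity device (twisting the equation by $a$ when $g$ is even) replaces their choice of a generator $\theta$ with $\theta^{d}$ a square in $K$, which lets them build a square root of $\theta$ in $L$ --- equivalent bookkeeping. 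One correction: the lemma you need for the non-isotriviality of the subfamily $y^{2}=x(x^{g}+\lambda)(x^{g}+c^{g}\lambda^{-p^{m}})$ is Lemma \ref{genus.small}, not Lemma \ref{lem.semistable}; your subfamily is not of the shape $y^{2}=x^{N}+\lambda x^{k}+1$, but after scaling $x$ it becomes (up to quadratic twist) $y^{2}=x(x^{g}+1)(x^{g}+\nu)$ with $\nu=c^{g}\lambda^{-(p^{m}+1)}$ running through infinitely many values, which is exactly the family treated in \ref{genus.small} and is the lemma the paper itself invokes at this point. A minor further remark: in deriving $\widetilde h(\beta)=\beta^{g}h(\beta)^{p^{m}}$ you should not say the $p^{m}$-power map ``fixes $K$'' (it does not in general); additivity alone gives $h(\beta)^{p^{m}}=\sum_{i}h_{i}^{p^{m}}(c\beta^{-1})^{i}$, which is all your definition of $\widetilde h$ uses, so the computation stands as written.
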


\begin{proof}
By hypothesis,  we can write $L=K(\theta)$ with $\theta= z^d$ for some 
element $z\in K$. When $d$ is odd, we can find such a $\theta$ with $z= u^2$ for some $u \in K$. 
Let $a:=\theta^{p^m+1}=z^{(p^{m}+1)/d}\in K$  and assume that $a^g\ne 1$. Let  $C_a/K$ denote the hyperelliptic curve of genus $g$ defined by the equation
$$y^2=x(x^g+1)(x^g+ a^g). $$
This curve always has a $K$-rational point, and we now show that it has a special new point over $L$.
Let us define a square root of  $\theta^{g+1}$ in $L$ as follows. 
If $g$ is odd, then we simply take $\theta^{(g+1)/2}$.  
Suppose now that $g$ is even. By hypothesis,  $d$ is then odd.  
Let $\theta_1:=u^{-1}\theta^{(d+1)/2}$. Then 
$\theta=\theta_1^2$ and we define $\theta^{(g+1)/2}$ to be 
$\theta_1^{g+1} \in L$. 
The reader will verify that $C_a$ admits the special new point
$$P:=(x,y) = \left(\theta, \theta^{(g+1)/2} (\theta^g+1)^{(p^{m}+1)/2} \right)$$
over the field $L$. 

Given any $\gamma \in K^*$, let $\theta':=\gamma^2 \theta$, and $z':=\gamma^{2d} z$. By construction, $z'^2=(\gamma u)^2$. Clearly $L= K(\theta')$. 
Let $a':=(z')^{(p^{m}+1)/d}$. Then $a'^g=\gamma^{2g(p^{m}+1)} a^g $. 
Since $K$ is assumed to be infinite, we find that if $a^g=1$, then there exists $\gamma \in K^*$ such that $\gamma^{2g(p^{m}+1)}\neq 1$, and thus such that $a'^g \neq 1$. In fact, all but finitely many $\gamma \in K^*$ are such that  $a'^g \neq 1$, and for any  $\gamma$ with $a'^g \neq 1$, 
the curve $C_{a'}$ has genus $g$, and also has a new special point over $L$.

It remains to prove that there exists an infinite subset $S$ of $K^*$ such that the curves in the set $\{C_{a'}\}_{\gamma \in S}$ are pairwise 
non-isomorphic 
over $\overline{K}$. This is done in Lemma \ref{genus.small}.
\end{proof}

\begin{remark} 
Let $A$ be a noetherian factorial domain of positive dimension, and let $K$ be its field of fractions. 
Then $K$ has non-trivial Kummer extensions of all possible degrees, and Proposition  \ref{curves} can be applied to $K$. 
Indeed, 
let $d>1$ be any integer. 
Let ${\mathfrak m}$ be a maximal ideal of $A$. Since $A$ is noetherian of positive dimension, we can choose 
$\alpha \in {\mathfrak m} \setminus {\mathfrak m}^2$. 
The polynomial $x^d-\alpha \in A[x]$ is then irreducible in $A[x]$ by the Eisenstein criterion. 
Since $A$ is factorial, Gauss' Lemma implies that $x^d-\alpha $ is also irreducible in $K[x]$.
It would be very interesting to know whether  an analogue of Proposition \ref{curves}  when $K$ has characteristic $0$ can be true.
\end{remark}

\begin{emp} \label{evidence} 
We now provide some evidence that the analogue of the integer $d(g)$ does not exist for $g=1$. 
More precisely, let $\sqrt[\ell]{m}$ denote a root of  $x^\ell -m$, with $\ell >2 $ prime and $m$ an integer such that $x^\ell -m$ is irreducible over ${\mathbb Q}$. 
Let $L:={\mathbb Q}(\sqrt[\ell]{m})$. We suggest below that there exist infinitely many elliptic curves $E/{\mathbb Q}$ with a new point over $L$.

For this we will need to assume several conjectures. First, 
assume   that there exist infinitely many primes $p$ of the form $u^2+64$ with $u \in {\mathbb N}$ (see \cite{DW}, Conjecture 1). 

It is known that for each such prime $p$, there exists an elliptic curve $E_p/{\mathbb Q}$ of conductor $p$
with a $2$-torsion point over ${\mathbb Q}$ and split multiplicative reduction at $p$. Such a curve is called a Neumann-Setzer elliptic curve (\cite{Neu}, \cite{Set}). 
We suggest that there exist infinitely many primes $p$ as above such that the elliptic curve $E_p/{\mathbb Q}$ has a new point over $L$.

New points over $L$ will be obtained using the Parity Conjecture, 
which states that over a number field $K$,  the rank ${\rm rank}(E/K)$ of the Mordell--Weil group of an elliptic curve $E/K$ and the root number $\omega(E/K)$
of the elliptic curve are related by the formula $(-1)^{{\rm rank}(E/K)}=\omega(E/K)$ (see, e.g., \cite{Dok}, 1.1). 
The Parity Conjecture implies that if  $E/{\mathbb Q}$ is an elliptic curve such that $\omega(E/{\mathbb Q}) \neq \omega(E_L/L)$, then $E$ must have a new point over $L$.
Indeed, the former condition, under the Parity Conjecture, implies that the rank of $E(L)$ is larger than the rank of $E({\mathbb Q})$. Since ${\mathbb Q}$ is the 
only proper subfield of $L$ because $[L:{\mathbb Q}]$ is prime by hypothesis, we conclude that $E$ must have a new point over $L$, as desired. 

The root number of an elliptic curve is obtained as a product of local root numbers. The local root number is equal to $-1$ when the place is archimedean, 
or when it is non-archimedean with split multiplicative reduction (see, e.g., \cite{Dok}, 3.1). The local root number is $+1$ when the elliptic curve has good reduction at the place.

Let $E_p/{\mathbb Q}$ be an elliptic curve of prime conductor $p$, with split multiplicative reduction at $p$. 
It follows that $\omega(E/{\mathbb Q}) = 1$.  When the curve has a ${\mathbb Q}$-rational  $2$-torsion point, the rank of $E_p({\mathbb Q})$ is $0$ (see \cite{Maz}, 9.10). Let us now compute the root number $\omega(E_L/L)$.

The extension $L/{\mathbb Q}$ has $1+\frac{\ell-1}{2}$ archimedean places. 
Let $s$ denote the number of  prime ideals of ${\mathcal O}_L$ that contain $p$.
Since the elliptic curve $E_L/L$ has split multiplicative reduction at all places above $(p)$, we find that 
$$ \omega(E_L/L)= (-1)^{(\ell+1)/2} (-1)^s.$$
In our discussion, $m$ is fixed and $p$ varies. Thus except for finitely many exceptions, we can assume that $p$ is coprime to $m$. 
For such a prime $p$, ${\mathbb Z}[\sqrt[\ell]{m}]$ localized at a prime above $p$ is a discrete valuation ring and, thus, 
the factorization of $(p)$ in ${\mathcal O}_L$ is obtained using the factorization 
of  $x^\ell -m$ modulo $p$.

In our discussion, $\ell$ is fixed. Thus it seems reasonable to conjecture that there exist infinitely many primes $p$ of the form $u^2+64$
such that  $\ell$ is coprime to $p-1$ (in fact, if   $\ell\equiv 3,5,$ or $6$, modulo $7$, then $\ell \nmid p-1$). Then $m\equiv M^\ell \pmod{p}$ for some integer $M$, and the factorization of $x^\ell-m$ modulo $p$
 is obtained from the factorization of $x^\ell -1$ in ${\mathbb F}_p[x]$. 
For this we compute the order $f$ of $p$ modulo $\ell$, and find that $x^\ell-1$ has $s:=1+\frac{\ell-1}{f}$ irreducible factors, so that 
there are exactly $1+\frac{\ell-1}{f}$ prime ideals ${\mathcal O}_L$ above $(p)$.
It seems reasonable to conjecture that among the infinitely many primes $p$ of the form $u^2+64$
such that  $\ell$ is coprime to $p-1$, infinitely many of them are such that $s$ is even, and infinitely many are such that $s$ is odd. 

When considering the set of all primes $p \neq \ell$ and not only the much smaller subset of primes $p$ of the form $u^2+64$, Chebotarev's Theorem
can be applied to the Galois extension ${\mathbb Q}(\zeta_\ell)$ to obtain the following result: {\it 
Assume that $\ell\equiv 3,5,$ or $6$, modulo $7$, and that $\ell \equiv 3 \pmod{4}$. Then
the density of primes $p$ such that $s$ is even is $1/2$, and the density of primes $p$ such that $s$ is odd is also $1/2$.}
Indeed, the Galois group $G$ of ${\mathbb Q}(\zeta_\ell)/{\mathbb Q}$ is cyclic of order $\ell-1$, and 
the number of prime ideals above $(p)$ in ${\mathbb Z}[\zeta_\ell]$ is $(\ell-1)/{\rm ord}(\sigma)$, where $\sigma $ is the Frobenius element associated with $(p)$. When $\ell \equiv 3 \pmod{4}$, 
$(\ell-1)/{\rm ord}(\sigma)$ is even if and only if $\sigma $
belongs to the unique subgroup $H$ of index $2$ in $G$. It follows from Chebotarev's Theorem that the density of primes $p$
such that the number of prime ideals above $(p)$ in ${\mathbb Z}[\zeta_\ell]$ is even is $|H|/|G|=1/2$.

If  the above conjectures are true, then there exists a sequence of Kummer extensions $L$ of increasing degree $\ell$, and for each such field, 
there exist infinitely many primes $p$ as above such that the elliptic curve $E_p/{\mathbb Q}$ has a new point over $L$.

We show in \ref{case ell=11}, using a completely different method, that when 
$\ell=11$ there exist infinitely many pairwise non-isomorphic elliptic curves over ${\mathbb Q}$ with a new point over $L$.
(This method might also apply to fields $L:=K(\sqrt[\ell]{m})$ of characteristic prime to $\ell=11$.) 
That the same statement holds when $\ell=5$ or $7$ follows from \ref{cor.elliptic}.
\end{emp}

\begin{remark} We discuss in this remark some analogies between statements obtained by reversing the roles played by a curve of genus $g$ and an extension of degree $d$. Let $L/K$ be a fixed extension of degree $d>1$. 
Consider the set ${\mathcal G}$ of all integers $g \geq 1$ such that there exists a smooth proper geometrically connected curve $X/K$ of genus $g$
with a new point in $L$. When $K$ is infinite of characteristic different from $2$, Theorem \ref{thm.bound} shows that if $g \geq d/4$ (and $g\geq 2$), then $g \in {\mathcal G}$. Moreover, for each such `large' $g$, there exist infinitely many pairwise non-isomorphic curves $X/K$ of genus $g$ with a new point on $L$.  

An analogous result holds true when the roles of $L/K$ and $X/K$ are reversed. Indeed, fix a curve $X/K$ of genus $g>1$, and consider the set 
${\mathcal D}$ of all integers $d \geq 1$ such that there exists a finite extension $L/K$ of degree $d$ such that $X/K$ has a new point over $L$. 
When $K$ is a number field, we noted in \cite{GLL1}, 7.5, that if $X/K$ has index $1$, then there exists a bound $d_0=d_0(g)$ such that if $d \geq d_0$, then $d \in {\mathcal D}$.
This result uses Hilbert's Irreducibility Theorem on a $K$-morphism $X \to {\mathbb P}^1$ of degree $d$. We note that for all such `large' $d$, 
Hilbert's Irreducibility Theorem implies that there exists infinitely many fibers $\Spec L$ which are irreducible, and since the curve has genus $g>1$, 
we can use Mordell's Conjecture to conclude that there are infinitely many pairwise non-isomorphic such fibers.

Recall now the Caporaso-Harris-Mazur result \ref{CHM} (under the Strong Lang Conjecture), which states that if $K$ is a number field and $g>1$ is fixed, then there exists an integer $d=d(g)$
such that if $L/K$ is an extension of degree at least $d$, then there exist only finitely many smooth proper geometrically connected curves $X/K$  of genus $g$ with a new point over $L$. A possible analogue of this statement when the roles of $L/K$ and $X/K$ are reversed could 
be the following theorem of Frey \cite{Frey}, Proposition 2, which uses a deep result of Faltings \cite{Fal} (see also \cite{A-V}, Corollary 3.4). 
Recall that the {\it gonality} of a curve $X/K$
is the smallest integer $\gamma$ such that there exists a non-constant $K$-morphism 
$X \to {\mathbb P}^1_{K}$ 
of degree $\gamma$.

\begin{emp} \label{gonality} {\it Let $K$ be a number field and $d>1$ be fixed. If $X/K$ is a smooth proper geometrically connected curve
of {\it gonality} at least $2d+1$, then there exist only finitely many extensions $L/K$ of degree $d$ over which $X/K$ has a new point.}
\end{emp} 
\end{remark}

\end{section}
\begin{section}{Examples} \label{ex.cyc}
 Let $K={\mathbb Q}$. Even for standard families of number fields $L/{\mathbb Q}$, such as the cyclotomic fields ${\mathbb Q}(\zeta_n)$ 
or their maximal totally real subfields ${\mathbb Q}(\zeta_n)^+$, it is not known in general whether there exists an elliptic curve $E/{\mathbb Q}$
with a new point over $L$. 

Some sporadic examples are known, such as when $n=43$, $67$, and $163$, where the existence of an elliptic curve $E/{\mathbb Q}$ with complex multiplication and a new torsion point of order $n$
over ${\mathbb Q}(\zeta_n)^+$ is established in \cite{DGJ}, Lemma 4. 

\begin{emp} 
Since ${\mathbb Q}(\zeta_n)/{\mathbb Q}$ is abelian of degree $\varphi(n)$, we can use Proposition \ref{cor.extension15-16}
when $\varphi(n)=12$ or $20$ to show the existence of infinitely many elliptic curves $E/{\mathbb Q}$ with a new point over ${\mathbb Q}(\zeta_n)$.
Recall that $\varphi(n)=16$ when $n=17$, $32$, $34$, $40$, $48$, or $60$. We do not know of an explicit elliptic curve $E/{\mathbb Q}$ with a new point over ${\mathbb Q}(\zeta_{17})$ (see however \ref{rem.ellipticBSD}). 

When viewing ${\mathbb Q}(\zeta_{32})$, ${\mathbb Q}(\zeta_{40})$, and
${\mathbb Q}(\zeta_{48})$,  as Kummer extensions of degree $4$ over an appropriate subfield of degree $4$,
Proposition \ref{pro.bound} 
shows  that  there exist 
genus $1$ curves with a new point over such ${\mathbb Q}(\zeta_n)$.  
For these three cases where the degree is $16$, we leave it to the reader to exhibit such a genus $1$ curve with a ${\mathbb Q}$-rational point. 

In the case of ${\mathbb Q}(\zeta_{60})$, we proceed using the method of \ref{cor.extension15-16}  and \ref{pro.composition}. 
Let $f(x):=(x^2+1)(x^2+x+1)(x^4+x^3+x^2+x+1)$, which we write as $f(x)=h(x)^2 + \ell(x)$ with 
$\ell(x) =(1/4)(x^3 + 3x^2/2 + x + 15/16)$. It turns out that $\ell(x)$ is irreducible in ${\mathbb Q}[x]$ so that the elliptic curve defined by $
y^2=-\ell(x)$ has no non-trivial ${\mathbb Q}$-rational $2$-torsion points. Thus, the points $P:=(i, h(i))$, $Q:=(\zeta_3,h(\zeta_3))$, and $R:=(\zeta_5, h(\zeta_5))$ add to a point $P+Q+R$ which is a new point over the field ${\mathbb Q}(i,\zeta_3, \zeta_5)={\mathbb Q}(\zeta_{60})$.
\end{emp}

\begin{remark} \label{rem.ellipticBSD}
Given a number field $L$ and an elliptic curve $E/K$, it is sometimes possible to {\it conjecturally} determine that $E/K$ has a new point over $L$
as follows. For every proper maximal subfield $L_0$ of $L/K$, compute the analytic rank of $E$ over $L_0$. If, for each such maximal proper subfield, 
the analytic rank of $E$ over $L$ is strictly larger than the analytic rank over $L_0$, then the Birch and Swinnerton-Dyer Conjecture
would imply that the algebraic rank of $E$ over $L$ is larger than the algebraic rank of $E$ over $L_0$ and, thus, that $E/K$ has a new point over $L$.
(We use here that the ${\mathbb Q}$-vector space $E(L) \otimes {\mathbb Q}$ cannot be the union of finitely many proper subspaces.)

In the table below, for a given field $L/{\mathbb Q}$, we list the Cremona labels of the first elliptic curves $E/{\mathbb Q}$ with a  (conjectural) new point over $L$ found by this method: 
we used Magma \cite{Magma} to test the analytic ranks over relevant subfields of each curve in Cremona's table \cite{Cre} up to a certain conductor. 

\renewcommand{\arraystretch}{1.3}
$$\begin{tabular}{|l|l||l|l|}
\hline
$L$ & $E/{\mathbb Q}$ &$L$ & $E/{\mathbb Q}$\\
\hline 
${\mathbb Q}(\zeta_{17})$ &   139a1, 143a1, 161a1, 168a1 & ${\mathbb Q}(\zeta_{25})$ &   49a1, 82a1, 201b1 \\
\hline 
${\mathbb Q}(\zeta_{19})$ &  17a1, 33a1, 80a1, 124b1 & ${\mathbb Q}(\zeta_{27})$ &   37a1, 73a1, 91a1 \\
\hline 
${\mathbb Q}(\zeta_{23})$ &  89a1, 94a1, 170e1  & ${\mathbb Q}(\zeta_{29})$ &  17a1, 84a1, 264b1\\
\hline
${\mathbb Q}(\zeta_{23})^+$ & 89a1, 197a1, 794b1, 954h1 & ${\mathbb Q}(\zeta_{31})$ & 50a1, 90a1, 136a1\\
\hline
${\mathbb Q}(\zeta_{47})^+$ &  204b1, 786m1 &   ${\mathbb Q}(\zeta_{33})$ &  46a1, 65a1, 69a1  \\
\hline
\end{tabular}
$$

\smallskip
\noindent We note that when the analytic rank of a given elliptic curve $E/K$ is larger than $3$, 
AnalyticRank(E) in Magma only returns an integer that is `probably' the analytic rank of $E/K$.
\end{remark}

\begin{emp}
When $[{\mathbb Q}(\zeta_n):{\mathbb Q}]=12$, it follows from \ref{pro.boundgeneral} that there exist infinitely many curves $X/{\mathbb Q}$ of genus $g=2$ 
with a new point over ${\mathbb Q}(\zeta_n)$. When $[{\mathbb Q}(\zeta_n):{\mathbb Q}]=16$, the same result holds using \ref{pro.bound}
and the fact that ${\mathbb Q}(\zeta_n)$ is a Kummer extension of degree $2$ over  ${\mathbb Q}(\zeta_n)^+$.
We give below the smallest known genus $g \geq 2$ where there exists a curve $X/{\mathbb Q}$ of genus $g$ with a new point over ${\mathbb Q}(\zeta_n)$ for some extensions with  $[{\mathbb Q}(\zeta_n):{\mathbb Q}]\geq 18$.
\renewcommand{\arraystretch}{1.3}
$$\begin{tabular}{|l|l|}
\hline
$g$ & $L$\\
\hline 
\hline 
$2$ & 
${\mathbb Q}(\zeta_{19})$,  ${\mathbb Q}(\zeta_{25})$, ${\mathbb Q}(\zeta_{27})$, ${\mathbb Q}(\zeta_{72})$   \\ 
\hline 
$3$ &  ${\mathbb Q}(\zeta_{33})$, ${\mathbb Q}(\zeta_{35})$, ${\mathbb Q}(\zeta_{44})$,   ${\mathbb Q}(\zeta_{56})$, ${\mathbb Q}(\zeta_{64})$,
 ${\mathbb Q}(\zeta_{84})$,  ${\mathbb Q}(\zeta_{96})$ \\
\hline
$4$ & ${\mathbb Q}(\zeta_{23})$, ${\mathbb Q}(\zeta_{39})$, ${\mathbb Q}(\zeta_{45})$, ${\mathbb Q}(\zeta_{52})$ \\ 
\hline
$5$ & ${\mathbb Q}(\zeta_{29})$, ${\mathbb Q}(\zeta_{31})$ \\
\hline
\end{tabular}
$$

\medskip \noindent
The cases of ${\mathbb Q}(\zeta_{19})$ and ${\mathbb Q}(\zeta_{31})$ are treated in \ref{ex.special}. 
The other cases follow from  
\ref{pro.bound}, including the case of ${\mathbb Q}(\zeta_{27})$,
which we view as a Kummer extension of degree $3$ 
on top of an extension of degree $6$. 
We view ${\mathbb Q}(\zeta_{72})$
as a Kummer extension of degree $6$ over a field of degree $4$. 
We view ${\mathbb Q}(\zeta_{25})$ and ${\mathbb Q}(\zeta_{35})$ as Kummer extensions of degree $5$ and $7$ over a field of degree $4$.
We view ${\mathbb Q}(\zeta_{56})$
and ${\mathbb Q}(\zeta_{84})$ as Kummer extensions of degree $7$ over a field of degree $4$.
We view ${\mathbb Q}(\zeta_{64})$
and ${\mathbb Q}(\zeta_{96})$ as Kummer extensions of degree $8$ over a field of degree $4$.
\end{emp}

\begin{example} \label{ex.special} Let $p$ be a prime, and consider the cyclotomic field ${\mathbb Q}(\zeta_p)$.
Let $f(x)$ denote the minimal polynomial of $\alpha:=\zeta_p(1-\zeta_p)$ over ${\mathbb Q}$. Then (computational evidence indicates that) there exists  $\dell(x) \in {\mathbb Z}[x]$
of degree $(p-3)/2$ such that $f(x)= x^{p-1} + p\dell(x)$, and that moreover, 
  ${\rm ord}_{x-1}(\dell(x))=1$, unless $6$ divides $p-1$, in which case ${\rm ord}_{x-1}(\dell(x))= 2$.
  
  It follows that when $6$ divides $p-1$, the curve $y^2=- p\dell(x)/(x-1)^2$ has the new point $(\alpha, \alpha^{(p-1)/2}/(\alpha-1))$, and when $(p-1)/6$ is odd, it has genus $g$
one less than the bound obtained in  \ref{pro.bound} or  \ref{pro.boundgeneral}. For instance, when $p=19$ and $31$, we find that  $g=2$ and $5$, respectively.
\end{example}
\begin{example} \label{ex.Kummer} 
{\bf (Kummer Extensions.)} 
Consider the field $L:={\mathbb Q}(\sqrt[\ell]{m})$, where $\sqrt[\ell]{m}$ denotes a root of $x^\ell -m \in {\mathbb Z}[x]$.
Assume that $\ell>3$ is prime and that $x^\ell -m$ is irreducible over ${\mathbb Q}$, so that $L/{\mathbb Q}$ does not contain any subextension. Thus, Proposition \ref{pro.bound} does not apply, and 
Theorem \ref{pro.boundgeneral} shows that there exists a hyperelliptic curve of genus $g=\lfloor (\ell-3)/4\rfloor$ with a new point over $L$. 

When $\ell \equiv 3 \mod{4}$,
the method discussed in \ref{rem.weakermethod} allows us to exhibit an explicit such curve, given by the equation  $y^2=4x^{(\ell+1)/2} +mx+4$, 
of genus $(\ell-3)/4$ when $4x^{(\ell+1)/2} +mx+4$ is separable, with the $K$-rational point $(0,2)$
and the new point $(\sqrt[\ell]{m}, (\sqrt[\ell]{m})^{(\ell+1)/2} +2)$.

When $\ell \equiv 1 \mod{4}$, one can exhibit an explicit curve of genus $\lfloor (\ell-3)/4 \rfloor$ with a new point over $L$ as follows.
The minimal polynomial $f(x)$ of 
$$\alpha:=\sqrt[\ell]{m}(1-\sqrt[\ell]{m})$$ is 
of the form
 $f(x)= x^\ell+m\dell(x)$ with 
$\dell(x) \in {\mathbb Z}[x]$ of degree $(\ell-1)/2$.
Indeed, the polynomial $(x^\ell-m)((1-x)^\ell-m)$ is invariant under the map $x \mapsto (1-x)$, and can thus be expressed as a polynomial in the variable $z:=x(1-x)$.  Writing 
$$(x^\ell-m)((1-x)^\ell-m)= (x(1-x))^\ell -mx^\ell -m(1-x)^\ell +m^2 = f(z), $$
we find that $f(z)=z^\ell -mz^{(\ell-1)/2}+\dots +m^2-m$, as desired.
We thus have obtained the curve $y^2=-mx\dell(x)$ with the new point $(\alpha, \alpha^{(\ell+1)/2})$ over $L$ and a ${\mathbb Q}$-rational point. This curve has genus 
$(\ell-3)/4$ when $\ell \equiv 3 \mod{4}$. When $\ell \equiv 1 \mod{4}$, it is very easy to explicitly express $xf(x) = h(x)^2 -\ell(x)$ as in Lemma \ref{lem.decomp} to obtain an explicit curve of genus $\lfloor (\ell-3)/4 \rfloor$ with a new point over $L$.

\begin{emp} \label{case ell=11} As above, $L:={\mathbb Q}(\sqrt[\ell]{m})$, and assume now that $\ell=11$. Then we can slightly improve Theorem \ref{pro.boundgeneral} as follows. Indeed, the minimal polynomial of $\alpha$ is 
$$f(x)=x^{11} + 11m(x^5 - 5x^4 + 7x^3 - 4x^2 + x) + m(m-1),$$ and  
we find that $(\alpha, \alpha^4)$ is a new point on the genus $1$ curve with the ${\mathbb Q}$-rational point $(0,0)$ given by the cubic 
$$y^3 + 11m(yx^2 - 5xy + 7y - 4x^3 + x^2) + m(m-1)x.$$
The $j$-invariant of the Jacobian of this cubic is a non-constant rational function $j(m)$ in the variable $m$. 
Thus, it is possible to find infinitely many integers of the form $a^{11}m_0$, with $a$ a non-zero integer, such that the $j$-invariants $j(a^{11}m_0)$
are pairwise distinct, producing in this way infinitely many pairwise non-isomorphic elliptic curves $E_{a^{11}m_0}/{\mathbb Q}$ with a new point over $L={\mathbb Q}(\sqrt[\ell]{m_0}) = {\mathbb Q}(\sqrt[\ell]{a^\ell m_0})$.
\end{emp}

\begin{emp} When $\ell \geq 13$, we provide in \ref{evidence}  some evidence that there might exist
an elliptic curve $X/{\mathbb Q}$ with a  new point over $L:={\mathbb Q}(\sqrt[\ell]{m})$. We do not know how to exhibit explicit examples of such points. 

When $6$ divides $\ell-1$ and $L:={\mathbb Q}(\sqrt[\ell]{3^*})$ with $3^*:=3(-1)^{(\ell-1)/2}$, the polynomial $\dell(x) \in {\mathbb Q}[x]$ associated to $\alpha$ with $m= 
(-1/3)^{(\ell-1)/2}$ is always divisible by $(x-1/3)^2$ (a similar phenomenon is mentioned in \ref{ex.special}). 
Thus, the curve $y^2=xt(x)/(x-1/3)^2$ has a new point over $L$ and when $(\ell-1)/6$ is odd, it has genus one less than the genus of the curves 
obtained with  \ref{pro.boundgeneral}. 

Let us now exploit the existence of special Fermat quotients when $\ell \equiv 1 \mod 3$ to show that in fact, for some values of $m$ such as $m=2$, there exists
a smooth projective curve $X/{\mathbb Q}$ of genus $(\ell-1)/6$ with a new point over $L$.
Recall that the Fermat curve, given by the equation $x^\ell + y^\ell=z^\ell$, has $\ell-2$ quotients $C_a$ of genus $(\ell-1)/2$, 
given by the equation $y^\ell=(x-1)x^a$ for $a \in [1,\ell-2]$. 

The curve $C_a/{\mathbb Q}$ has the new points $(2, \sqrt[\ell]{2^a})$, $(-1, (-1)^{a+1}\sqrt[\ell]{2})$,
and $(1/2, -1/\sqrt[\ell]{2^{a+1}})$ over $L={\mathbb Q}(\sqrt[\ell]{2})$.
It turns out that when $a^2+a+1 \equiv 0 \mod \ell$, the curve $C_a/K$ has an extra $K$-automorphism $\sigma$ of order $3$ and the three points above form an orbit of $\sigma$. 
The quotient $X:=C_a/\langle \sigma\rangle$ has genus $(\ell-1)/6$ (\cite{H-R}, 4.1). Since the quotient $C_a \to X$ has degree $3$, and $[L:{\mathbb Q}]= \ell \neq 3$, 
we find that $X$ also has a new point over $L$; it also has a ${\mathbb Q}$-rational point since $C_a$ does.
\end{emp}
\end{example}
\begin{remark} 
It is possible to give a slightly  more precise description of the minimal polynomial $f(x)$ of $\alpha$ introduced in \ref{ex.Kummer}. When $\ell \geq 13$, we find that 
\begin{equation*}
\begin{array}{ll}
&f(x)=  x^\ell+ m(m-1)+\ell m x(x-1) \cdot \\
 &  \quad \cdot \left((-1)^{(\ell-3)/2}x^{(\ell-5)/2}+ \cdots + \frac{(\ell-5)(\ell-7)(\ell-10)}{24} x^3 - \frac{(\ell-5) (\ell-7)}{6} x^2 +\frac{(\ell-5)}{2}x -1\right).
\end{array}
\end{equation*}

Consider now the above polynomial $f(x)$ as a polynomial $f(x,m) \in {\mathbb Z}[x,m]$, defining a plane curve. It turns out that this plane curve has always genus $0$, because when we view $f(x,m)=0$ as defining a hyperelliptic curve with equation of the form $m^2 +mg(x)+x^{\ell}=0$ and we complete the square, 
we find that $x^{\ell}-g(x)^2/4= (x-1/4)h(x)^2$ for some polynomial $h(x) \in {\mathbb Z}[x]$, with 
$$h(x) = x^{(\ell-1)/2} - \frac{\ell^2-1}{8}x^{(\ell-3)/2}+ \dots +(-1)^{(\ell-1)/2}\frac{(\ell-3)(\ell-4)}{2} x^2 -(\ell-2)x+1.$$ The splitting fields of the polynomials $g(x)$ and $h(x)$ are both isomorphic to the totally real cyclotomic subfield ${\mathbb Q}(\zeta_\ell)^+$. It turns out that $h(x)$ is the minimal polynomial of $1/w$, 
where $w:= \zeta_\ell + \zeta_{\ell}^{-1}+2$.
\end{remark}

\begin{example}  {\bf (Artin--Schreier Extensions).} Let $K$ be a field of characteristic different from $2$ or $3$. 
Consider an Artin--Schreier extension $L=K(\alpha)$, where $\alpha $ is a root of an irreducible polynomial $x^\ell -(ax+b) \in K[x]$ with $ab\neq 0$, 
and in this example $\ell  \in {\mathbb N}$ need not be prime.
When $\ell+1 $ is divisible by $3$ (resp.~by $4$), we find that the elliptic curve
 $y^3=ax^2+bx$ has the  new point $(\alpha, \alpha^{(\ell+1)/3})$, and that $y^4=ax^2+bx$ has the new point $(\alpha, \alpha^{(\ell+1)/4})$. 
Note that these two elliptic curves are not isomorphic over $\overline{K}$ and thus when $\ell+1$ is divisible by $12$ we have two different elliptic curves with a new point over $L$. When $\ell-2$ is divisible by $4$, we find that the elliptic curve $y^4=ax+bx^2$ has the new point $(1/\alpha, \alpha^{(\ell-2)/4})$
over $L$. 

When $\ell=12$, the above does not apply. In this case however,   $\alpha^5$ is a root of
$f(x)=x^{12}-5ab^2x^5-5a^3bx^3-a^5x-b^5$ (a machine easily  checks that $f(x^5) $ is divisible by $x^{12}-ax-b$ in $K[a,b][x]$).
We then find that the cubic curve $y^3 -5b^2yx = 5bx^3 +a^4x + b^5$ has the new point $(a\alpha^5, \alpha^{20})$ 
over $K(\alpha^5)$.  The discriminant of the Jacobian of this genus $1$ curve over ${\mathbb Z}[a,b]$ is $\Delta=-25 b^2  (432 a^{24} + 93535 a^{12} b^{11} + 200 b^{22})$.

When   $\ell=13$,   $\alpha^5$ is a root of 
$f(x)=x^{13}-5abx^8+5a^2b^2x^3-a^5x-b^5$.
We then find that the curve $y^2 -5abyx^2 = -5a^2b^2x^4 +a^5x^2 + b^5x$ has the new point $(\alpha^5, \alpha^{35})$ 
over $K(\alpha^5)$ and the $K$-rational point $(0,0)$.  The discriminant of the Jacobian of this genus $1$ curve over ${\mathbb Z}[a,b]$ is $\Delta= -5a^{4}b^{12}(16a^{13}+135b^{12})$.
 
When $\ell=6g+4$, $\alpha^3$ is the root of 
$f(x)= x^{6g+4} - 3ax^{4g+3} + 3a^2x^{2g+2} - a^3x - b^3$, and we find  a curve (of genus $g$ in general) of the form $y^2-3ayx^{g+1}=-3a^2x^{2g+2}+a^3x+b^3$ with a new point over $L$. When $\ell=6g+5$, $\alpha^3$ is the root of 
$f(x)= x^{6g+5} - 3abx^{2g+2} - a^3x - b^3$.
\end{example}

\end{section}

\begin{section}{Non-constant families} 

Let $K$ be an infinite field.
We recall here a method for showing that a one-parameter family of smooth projective curves is not constant. The lemmas in this section are used
in \ref{orbits}, \ref{pro.bound}, and 
\ref{curves}.   

Let $U$ be an affine integral smooth curve over $K$. Let ${\mathcal X} \to U$ be a family of smooth projective curves of genus $g\geq 1$, that is, ${\mathcal X} \to U$ is a smooth projective morphism whose fibers  are geometrically connected curves of genus $g$. Let $S$ be a regular compactification of $U$. To prove that the families we are dealing with have infinitely many non isomorphic members, we will show that they have a limit fiber above some point of $s\in S\setminus U$ which is a stable non-smooth curve. Let us explain the principle. 

Let ${\mathcal M_{g,K}}/K$  be the proper Deligne--Mumford stack 
of stable curves of genus
$g \geq 2 $ over $\Spec K$ (see \cite{D-M}, 5.1). This stack  admits a coarse moduli space $ M_{g,K}/K$ which is proper over $K$.
Denote by $ M^0_{g,K}$ the open subset  corresponding to smooth stable curves  of genus $g$.
By  definition, there exists a $K$-morphism $U \to  M^0_{g,K}$
which sends a geometric point $u $ of $ U$ to the point of $M^0_{g,K}$ which represents the isomorphism class of the fiber of ${\mathcal X} \to U$ above $u$.

\begin{emp} 
\label{semistable}
{\it Suppose that  there are infinitely many closed fibers of ${\mathcal X} \to U$ which are isomorphic over $\overline{K}$}. Then the morphism
$U \to  M^0_{g,K}$ is not quasi-finite. As $\dim U=1$, the map is constant. Since $ M_{g,K}/K$ is proper, the morphism $U \to  M^0_{g,K}$ extends to a morphism 
$S \to M_{g,K}$, which is also constant. 
{\it We claim that the generic fiber of ${\mathcal X} \to U$
has then potentially good reduction over $\Spec \cO_{S,s}$} for any $s\in S\setminus U$. Indeed, by the stable reduction theorem, there exists an integral proper regular curve $T/K$, a  finite surjective morphism $\rho: T\to S$ and a stable curve $\mathcal X'\to T$ of genus $g$ such that $\mathcal X\times_U \rho^{-1}(U) = \mathcal X'\times_T \rho^{-1}(U)$. The morphism $T\to M_{g, K}$ induced by $\mathcal X'\to T$ is the same as the composition $T\to S\to M_{g,K}$ because $M_{g,K}$ is separated.  Therefore $T\to M_{g,K}$ has constant image in $M_{g,K}^0$ and $\mathcal X'\to T$ is smooth. 

\medskip

We are going to use \ref{semistable} in the proof of the following two lemmas.
\end{emp} 
 \begin{lemma} \label{genus.small}
Let $g \geq 1$. Let $K$ be an infinite field of characteristic different from $2$ and prime to $g$.  
\begin{enumerate}[\rm (1)]
\item For any $\lambda \in K^*$ with $\lambda^g \neq 1$,
let $X_\lambda$ denote the hyperelliptic curve over $K$ defined by the equation 
$y^2=x(x^g+1)(x^g+\lambda^g)$.
Then there exists an infinite 
subset $S$ of $K^*$ such that the curves $X_\lambda$, $\lambda\in S$, are pairwise non-isomorphic over $\overline{K}$.
\item The hyperelliptic curve $X/K(t)$ defined by the equation $y^2=x(x^g+1)(x^g+t^g)$ does not have potentially good reduction
over $\Spec K[t]_{(t)}$.
\end{enumerate}
\end{lemma}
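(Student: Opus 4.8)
I would prove (2) first, and then deduce (1) from (2) by the specialisation argument of \ref{semistable}.

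\emph{Reduction of (1) to (2).} Let $U \subseteq \A^1_K$ be the affine curve obtained by deleting the origin and the $g$ roots of $x^g-1$; it is integral and smooth, has the obvious regular compactification $S := \mathbb P^1_K$, and $0 \in S \setminus U$. Over $U$ sits the family $\X \to U$ with fibre $X_\lambda$ over $\lambda$, and its generic fibre is precisely the curve $X/K(t)$ of part (2) under the identification $\lambda \leftrightarrow t$; note $\cO_{S,0} = K[t]_{(t)}$. Suppose (1) fails, i.e. the $X_\lambda$ fall into only finitely many $\overline K$-isomorphism classes; then one class is attained for infinitely many closed points of $U$. For $g \geq 2$, \ref{semistable} then forces $X/K(t)$ to have potentially good reduction over $\cO_{S,0}$, contradicting (2). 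For $g = 1$ one argues the same way with the $j$-invariant morphism $U \to \A^1_K$ in place of the map to $M_{g,K}$: it must be constant, so $j(X/K(t)) \in K$ is integral at $t=0$ and $X/K(t)$ has potentially good reduction, again contradicting (2). Hence (2) implies (1).

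\emph{Proof of (2).} Put $\cO := K[t]_{(t)}$ and $F := K(t)$; we must show $X_F$ has no smooth proper model over any finite extension of $\cO$. Setting $t = 0$ in the defining equation gives $y^2 = x^{g+1}(x^g+1)$, which is singular at the origin, and the point is that this singularity persists under base change. Rescaling by $x = tu$, $y = t^{(g+1)/2}v$ (adjoin $\sqrt t$ to $\cO$ when $g$ is even) one computes
$$ v^2 = u(u^g+1)(t^g u^g + 1), $$
whose fibre at $t=0$ is the \emph{smooth} hyperelliptic curve $v^2 = u(u^g+1)$. Since the $g$ points $x$ with $x^g=-t^g$ have valuation $1$ and become the distinct points $u=\zeta$ ($\zeta^g=-1$) after this one blow-up, over a suitable finite extension of $\cO$ one obtains a semistable model of $X_F$ whose special fibre has at least the two components $w^2 = x^g+1$ (the normalisation of the naive special fibre, glued along the fibre over $x=0$) and $v^2 = u(u^g+1)$, so it is reducible, hence not smooth. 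Equivalently and more conceptually: among the $2g+2$ branch points of $X_F \to \mathbb P^1_F$, the $g+1$ points over $\{0\}\cup\{x : x^g=-t^g\}$ have pairwise differences of positive valuation, hence form a proper non-trivial cluster at $t=0$, which already forces the stable reduction to be reducible (for $g=1$ the naive fibre $y^2 = x^2(x+1)$ is itself the stable reduction, a nodal cubic). In all cases the stable reduction of $X_F$ is singular, so $X_F$ does not have potentially good reduction over $\Spec K[t]_{(t)}$.

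\emph{Main obstacle.} The delicate part is the bookkeeping in (2): one must check, over both parities of $g$, that the rescaled equation really contributes a second component (so that the special fibre is reducible), track the number of points along which the two components are glued ($2$ when $g$ is odd, $1$ when $g$ is even), and confirm that the arithmetic genus of the resulting nodal curve is still $g$, so that no contracted $\mathbb P^1$'s have been overlooked. If one prefers not to exhibit the stable model, an alternative for $g$ odd is to note that $s := x^g$ realises $X_F$ as a cover of the genus-one curve $W^2 = s(s+1)(s+t^g)$, whose $j$-invariant has a pole at $t=0$; hence $\Jac(X_F)$ surjects onto an elliptic curve with multiplicative reduction and so cannot have potentially good reduction, whence neither can $X_F$ — the even case then requiring a separate but analogous construction. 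The semistable-model route is the one that works uniformly in $g$.
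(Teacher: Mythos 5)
Your proposal is correct and takes essentially the same route as the paper: it proves (2) by computing the degeneration at $t=0$ (passing to $K(\sqrt t)$ when $g$ is even, exactly as the paper's $s^2=t$), exhibiting two components of positive genus meeting in one or two points according to the parity of $g$, treating $g=1$ as the multiplicative/nodal case, and then deduces (1) from (2) via \ref{semistable} for $g\ge 2$ and the $j$-invariant map for $g=1$. The only slips are cosmetic: for $g$ even the normalisation of the naive special fibre is $w^2=x(x^g+1)$ (genus $g/2$), not $w^2=x^g+1$, and ``reducible, hence not smooth'' should be backed by the observation that both components have positive genus for $g\ge 2$, so they persist in the stable model --- a point your cluster/cross-ratio remark already covers.
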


\begin{lemma} \label{lem.semistable} Let $K$ be an infinite field of
characteristic different from $2$. Let $N-2\ge k\ge 2$ be two integers, 
with $N-k$ and $k$ both prime to the characteristic exponent of $K$. 
\begin{enumerate}[\rm (1)] 
\item Let $X_\lambda/K$ denote the  hyperelliptic curve defined by 
$ y^2=x^N+ \lambda x^k+1$
(smooth except for finitely values of $\lambda \in \overline{K}^*$). 
Then there exists an infinite 
subset $S$ of $K^*$ such that the curves $X_\lambda$, $\lambda\in S$, are pairwise non-isomorphic over $\overline{K}$.
\item The hyperelliptic curve $X/K(t)$ defined by the equation $ y^2=tx^N+  x^k+t $ does not have potentially good reduction
over $\Spec K[t]_{(t)}$. 
\end{enumerate}
\end{lemma}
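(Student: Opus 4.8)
The plan is to prove both lemmas by the same strategy, namely by finding an explicit point $s$ of the compactification $S$ of the base over which the relevant family acquires a \emph{stable but non-smooth} (equivalently, a \emph{semistable with non-zero abelian rank deficiency}) limit, and then invoking \ref{semistable}. Concretely, for Lemma \ref{genus.small} I would take $U := \Spec K[\lambda, \lambda^{-1}, (\lambda^g-1)^{-1}]$ with tautological family $\mathcal X \to U$ given by $y^2 = x(x^g+1)(x^g+\lambda^g)$, and for Lemma \ref{lem.semistable} I would take $U := \Spec K[\lambda, \lambda^{-1}]$ minus the finitely many bad values, with $\mathcal X \to U$ given by $y^2 = x^N + \lambda x^k + 1$. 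In each case part (1) follows from part (2): if part (2) holds, then the family over $U$ cannot have all (hence infinitely many) closed fibers isomorphic over $\overline K$, because by \ref{semistable} that would force potentially good reduction at the boundary point; so the image of $U$ in $M^0_{g,K}$ (or in $\mathbb A^1_K$ via the $j$-invariant when $g=1$) is not a point, hence is dense, and then the argument of \ref{orbits} produces an infinite subset $S \subseteq U(K)$ mapping injectively to its image, i.e. giving pairwise non-isomorphic curves.

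So the heart of the matter is part (2) of each lemma: showing that the specified one-parameter curve over $K(t)$ does \emph{not} have potentially good reduction over the discrete valuation ring $K[t]_{(t)}$. For Lemma \ref{genus.small}(2), the curve $y^2 = x(x^g+1)(x^g+t^g)$ at $t=0$ degenerates to $y^2 = x(x^g+1)^2$, i.e. the $2g$ branch points collide in pairs: the $g$ roots of $x^g+t^g$ merge onto the $g$ roots of $x^g+1$. I would make this precise by passing to a finite base change $t = \tau$ (or a suitable ramified cover so that the $g$-th roots are available and the model becomes semistable) and exhibiting the semistable limit explicitly as a nodal curve: writing $x^g + t^g = \prod_{\zeta^g=1}(x+\zeta t)$ and $x^g+1 = \prod_{\zeta^g=1}(x+\zeta)$ after adjoining a $g$-th root of $-1$, near each root $x_0$ of $x^g+1$ one has locally $y^2 = (\text{unit})\cdot (x-x_0)(x-x_0 - c t + O(t^2,(x-x_0)^2))$ for a nonzero constant $c$ (here one uses $\gcd(g,\mathrm{char}\,K)=1$ so the roots are simple and move with nonzero derivative in $t$), which is the standard local picture of a vanishing cycle producing a node in the special fibre. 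Hence the stable limit is a genuinely nodal curve of arithmetic genus $g$, its Jacobian has toric part of positive dimension, and so the generic fibre has \emph{bad} (not potentially good) reduction at $t=0$; this is invariant under further finite base change, which is exactly the statement of (2). For Lemma \ref{lem.semistable}(2) the curve is $y^2 = t x^N + x^k + t$; at $t=0$ it degenerates to $y^2 = x^k$, a highly non-reduced fibre, and again I would clear this up by a base change making it semistable and identify a node — alternatively, observe that for fixed generic $\lambda$ the curve $y^2 = x^N + \lambda x^k + 1$ is smooth of genus $g$, rescale $x \mapsto t^{-a}x$, $y \mapsto t^{-b} y$ to match $y^2 = tx^N + x^k + t$ up to the substitution $\lambda \leftrightarrow $ a power of $t$, and track where the branch points collide as $t \to 0$: the factorization hypothesis $N - k$ and $k$ prime to the characteristic exponent guarantees that in the limit the $N$ branch points break into two clusters (governed by $x^{N-k}$ and by the lower-order part) that each internally remain distinct to first order, producing vanishing cycles rather than a cusp-type (potentially good) degeneration.

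The main obstacle, and the step I would spend the most care on, is the \emph{semistable reduction computation}: verifying that after an appropriate finite ramified base change the minimal regular model of $\mathcal X$ over (a point of) the base has special fibre that is nodal with at least one non-disconnecting node, equivalently that the limit in $\overline{M_g}$ lies in the boundary $\partial M_g = M_g \setminus M_g^0$ — not merely that the naive equation degenerates. For $g=1$ I can shortcut this entirely: compute the $j$-invariant as a rational function of $\lambda$ (resp.\ of $t$) and check it is non-constant, or equivalently that $j \to \infty$ as the parameter tends to the boundary value (multiplicative reduction), which for $y^2 = x^N+\lambda x^k+1$ with $N,k \le 4$ and $g=1$ is a direct $\mathbb A^1$ computation; the lemma as stated in \ref{pro.bound} only needs the subfamilies with $g=1$ handled, plus the $g\ge 2$ case where I fall back on the boundary-point argument above. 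For $g \ge 2$ I would organize the argument around the clean criterion: if the family over $U$ had infinitely many $\overline K$-isomorphic fibres then (\ref{semistable}) the generic fibre over $K(t)$ has potentially good reduction at every boundary point, so it suffices to exhibit \emph{one} boundary point with a non-smooth semistable limit, and the explicit collision-of-branch-points picture above does exactly that. I would also remark that the conditions on the characteristic ($\ne 2$ everywhere, and $g$, $k$, $N-k$ prime to $\mathrm{char}\,K$) are used precisely to ensure the hyperelliptic cover is tamely ramified and the branch divisor degenerates transversally, so that the semistable model is the expected one.
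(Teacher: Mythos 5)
Your overall architecture is the same as the paper's: deduce (1) from (2) via \ref{semistable} (relating the $\lambda$-family to the $K(t)$-curve by $\lambda=1/s^2$, $s^2=t$), handle the genus-one case $N=4$, $k=2$ through the $j$-invariant, and prove (2) by a ramified base change followed by an explicit semistable/stable limit computation. The gap is in the decisive step of (2). You assert that because the $N$ branch points of $y^2=tx^N+x^k+t$ split into two clusters (the $k$ small roots of size $t^{1/k}$ near $x=0$ and the $N-k$ large roots near $x=\infty$), each internally distinct to first order after rescaling, the degeneration ``produces vanishing cycles rather than a cusp-type degeneration.'' That inference is not valid as stated: for $y^2=x^N+t$ all branch points collide at $x=0$ and remain pairwise distinct after rescaling, yet the curve has potentially good reduction (set $x=t^{1/N}u$, $y=t^{1/2}v$ to get $v^2=u^N+1$). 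What must actually be checked is how the genus distributes over the components of the stable limit. The paper does this after the base change $s^{2k(N-k)}=t$: the chart at the small cluster reduces to $y_1^2=x_1^k+1$, of genus $\lfloor(k-1)/2\rfloor$, and the chart at the large cluster to $y_2^2=x_2^k(x_2^{N-k}+1)$, of genus $\lfloor(N-k-1)/2\rfloor$ or $\lfloor(N-k)/2\rfloor$; when both genera are positive one concludes at once. But the hypotheses allow $k=2$ (and $k=N-2$ with $N$ even), where the small-cluster component has genus $0$ and the ``two components of positive genus'' criterion fails; there one must instead exhibit a non-disconnecting node --- the paper identifies the stable reduction as the genus $g-1$ curve with one node --- and your sketch does not see this case at all. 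Since $k=2$ occurs in the subfamilies to which the lemma is applied in \ref{pro.bound}, it is not a removable corner case.

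A secondary warning sign: your parallel treatment of \ref{genus.small}(2) starts from a wrong limit. At $t=0$ the equation $y^2=x(x^g+1)(x^g+t^g)$ degenerates to $y^2=x^{g+1}(x^g+1)$, not to $y^2=x(x^g+1)^2$: the $g$ roots of $x^g+t^g$ all collide at $x=0$ (together with the branch point $x=0$ of the factor $x$); they do not pair off with the roots of $x^g+1$. So the pairwise vanishing-cycle local picture you describe there does not occur, and the correct computation again proceeds by rescaling the single cluster at $x=0$ after a base change and comparing the genera of the two resulting components, as the paper does. In short: the strategy is the right one and is the paper's, but the semistable-reduction computation that carries the whole proof --- including the degenerate case $k=2$ --- is still missing; the cluster heuristic you invoke cannot substitute for it.
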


The statement \ref{semistable} is used in the proof of both lemmas to show that (2) implies (1) when $g \geq 2$. 
To prove (2), we compute enough of a regular model for the curve $X/K(t)$, over an extension $K(s)$ of the form $s^a=t$ for some $a\geq 1$,
to show that the semi-stable reduction of the curve $X/K(t)$ cannot be  good. The stable reduction of a hyperelliptic curve
over $K(t)$ with ${\rm char}(K) \neq 2$ is well understood (see, e.g., \cite{Bosch}). Here we follow a 
general method described in \cite{LL}, 3.9 and 3.10. 
 
Let $y^2=f(x)$ denote an equation for $X$, with $f(x)$ separable. Make a finite separable extension $F/K(t)$ such that the polynomial $f(x)$ has all its roots in $F$. 
Consider the quotient $\pi: X \to {\mathbb P}_{F}^1$ of $X$ by the hyperelliptic involution, and construct a model ${\mathcal Y}/{\cO_F}$
of ${\mathbb P}_{F}^1$
such that the points in the branch locus of $\pi$ specialize to distinct points in the smooth locus of the special fiber of ${\mathcal Y}$. Let ${\mathcal X}/\cO_F$ 
denote the integral closure of ${\mathcal Y}$ in the function field of $X_F/F$. Consider the natural finite morphism $\pi: {\mathcal X} \to {\mathcal Y}$. 
The curve $X/K$ cannot have potentially good reduction if 
we can find a model ${\mathcal Y}$ with two components $\Gamma$ and $\Gamma'$ in the special fiber of ${\mathcal Y}$ such that the preimages $\pi^{-1}(\Gamma)$ and $\pi^{-1}(\Gamma')$ are irreducible curves of positive 
geometric 
genus. This is the general strategy that we have adopted to prove Part (2) in the above lemmas. 
We now indicate how to find such curves $\pi^{-1}(\Gamma)$ and $\pi^{-1}(\Gamma')$ of positive genus, and leave it to the reader to 
completely justify our computations.

\medskip
\noindent 
{\it Proof of Lemma {\rm \ref{genus.small} (2)}.}
When $g=1$, the reduction of $X/K(t)$ over $R:=K[t]_{tK[t]}$ is multiplicative already.
Suppose now that $g$ is even.  Let $F=K(s)$ with $s^2=t$. Then the curve $X_F/F$  has stable reduction over $R[s]$ 
with reduction consisting in the union of two smooth curves of genus $g/2$ meeting at one point. 
This can be seen using the two charts $y^2=x(x^g+1)(x^g+s^{2g})$, and $(y/x^{g+1})^2=(1/x) (s^{2g}(1/x)^g+1)((1/x)^g+1)$.
In both charts, we find in reduction a curve $y^2=x(x^g+1)$, of genus $g/2$.
This shows that the stable reduction of $X/K(t)$ cannot be good.

If $g>1$ is odd, then $X/K(t)$ has stable reduction over $R$ with reduction equal to the union of two smooth curves of genus $(g-1)/2$ meeting at two points. 
This can be seen using the two charts $y^2=x(x^g+1)(x^g+t^{g})$, and 
$(y/x^{g+1})^2=(1/x) (t^g(1/x)^g+1)((1/x)^g+1)$. 
 This shows that the stable reduction of $X/K(t)$ cannot be good.
\qed

\medskip
\noindent 
{\it Proof of Lemma {\rm \ref{lem.semistable}} when $N \geq 5$.}
The hyperelliptic curve over $K(u)$ given by the equation $y^2=x^N+ux^k+1$
is isomorphic over $K(s)$ with $s^2=1/u$ to the curve given by $z^2=s^2x^N+x^k+s^2$. This latter curve
in turn is isomorphic to the curve $X_F/F$ where $F=K(t)(s)$ with $s^2=t$.
Thus indeed, Part (2) implies Part (1) when $g \geq 2$. 

In the case where $N=4$ and $k=2$, the curve has genus $1$ and its semi-stable reduction is multiplicative. 
In general, we work over $F=K(s)$, where $s^{2k(N-k)}=t$ and consider the following two charts:
first, $y_1^2=s^{2N(N-k)}x_1^N+ x_1^k+1$ with $x_1=x/s^{k(N-k)}$ and $y_1=y/s^{k(N-k)}$, which in reduction gives a curve $y_1^2=x_1^k+1$ of genus $g_1:=\lfloor (k-1)/2\rfloor$
(we use here that $k$ is coprime to ${\rm char}(K)$). Second, $y_2^2= x_2^k (x_2^{N-k} +1) + s^{2kN}$ and $x_2\ne 0$, 
with $x_2=s^{2k}x$ and $y_2=s^{k^2}y$, 
which gives in reduction $y_2^2= x_2^k (x_2^{N-k} +1)$, of genus $g_2:=\lfloor (N-k-1)/2\rfloor$ if $k$ is even, and $g_2:=\lfloor (N-k)/2\rfloor$ is $k$ is odd. Note that we exclude the points with $x_2=0$ which 
correspond to the intersections points with the above irreducible component. 
Thus we found two components of positive genus except when $k=2$, or $k=N-2$ and $N$ is even. Note that in the latter case, we can reduce to the case $k=2$ by a change of variables. Let us then consider the case $k=2$, where $g_2=g(X)-1$. In this case, 
the stable reduction consists of the curve of genus $g_2$ with one node. 
\qed

\begin{emp} \label{orbit3} We now consider families of curves of genus $g=1$.
Let $V$ denote the open subset of $\Spec K[a,b,c,d,e]$, respectively of $\Spec K[b,c,d,e]$,
where the polynomial $\ell(x)=ax^4+bx^3+cx^2+dx+e$ is separable of degree $4$, respectively $3$.
We explicitly define below the $K$-morphism 
$$j: V\longrightarrow \A^1_K,$$ 
which maps a separable polynomial $\ell(x)$ of degree $n$ to the $j$-invariant
of the Jacobian of the  curve $X/K$ of genus $1$ defined by $y^2=\ell(x)$.

Given $\ell(x)= ax^4+bx^3+cx^2+dx+e$, we have the usual invariants
$I:= 12ae -3bd + c^2$ and $J:=72ace+ 9bcd-27ad^2 - 27eb^2-2c^3$ (see, e.g., \cite{Jac}, or \cite{CF}). 
The quantity $4I^2-J^3$ is equal to $27$ times the discriminant of $\ell(x)$.
 When ${\rm char}(K) \neq 2$, we let the $j$-invariant be $$j:=64 \frac{4I^3}{(4I^3-J^2)/27}.$$
 
\begin{claim}
 The curves $y^2=\ell_1(x)$ and $y^2=\ell_2(x)$ are not isomorphic over $\overline{K}$ if the $j$-invariants associated
 to the polynomials $\ell_1(x)$ and $\ell_2(x)$ are not equal. 
\end{claim}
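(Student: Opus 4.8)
The plan is to reduce the statement to the classical fact that the $j$-invariant classifies elliptic curves over an algebraically closed field, after checking that the quantity $j$ defined above via the invariants $I$ and $J$ is genuinely the $j$-invariant of $\Jac(X)$ for $X : y^2=\ell(x)$.

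First I would argue the contrapositive. Suppose $X_1 : y^2=\ell_1(x)$ and $X_2 : y^2=\ell_2(x)$ are isomorphic over $\overline{K}$; both are smooth projective geometrically connected curves of genus $1$, so an isomorphism $X_{1,\overline{K}}\cong X_{2,\overline{K}}$ induces, by functoriality of the Jacobian, an isomorphism of elliptic curves $\Jac(X_1)_{\overline{K}}\cong \Jac(X_2)_{\overline{K}}$. Since $\Jac$ commutes with base change and the $j$-invariant of an elliptic curve is invariant under isomorphism over an algebraically closed field, $j(\Jac(X_1))=j(\Jac(X_2))$. Thus everything comes down to verifying the identity $j=j(\Jac(X))$, with $j$ as in the definition above.

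To prove that identity I would base change to $\overline{K}$: there the genus $1$ curve $X$ has a rational point (a Weierstrass point of $\pi$, or the point at infinity of the smooth model when $\deg\ell=3$), so $X_{\overline{K}}$ is an elliptic curve canonically isomorphic to $\Jac(X)_{\overline{K}}$, and it remains to put $y^2=\ell(x)$ in Weierstrass form over $\overline{K}$ and compute. This is the classical computation of the Jacobian of a binary quartic (or cubic): $\Jac(X)$ has a model $Y^2=X^3-27IX-27J$, and substituting $A=-27I$, $B=-27J$ into the standard formula $j=1728\cdot 4A^3/(4A^3+27B^2)$ yields exactly $1728\cdot 4I^3/(4I^3-J^2)=64\cdot 4I^3/((4I^3-J^2)/27)$; see \cite{Jac} or \cite{CF}. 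One also notes that $4I^3-J^2$, being proportional to the discriminant of the separable polynomial $\ell(x)$, does not vanish on $V$, so $j$ is well defined there.

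The hard part---really the only part that is not formal---is pinning down this last normalization: one must verify against the references that the $j$-invariant of the Jacobian of $y^2=\ell(x)$, expressed through the invariants $I$ and $J$ of the associated binary form, is precisely the expression written in the definition. Once that bookkeeping is carried out against \cite{Jac} or \cite{CF}, the rest of the argument is immediate.
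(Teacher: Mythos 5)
Your argument is fine as far as it goes, and for ${\rm char}(K)\neq 2,3$ it is essentially the paper's own proof: reduce via functoriality of the Jacobian to the statement that the displayed expression in $I$ and $J$ equals $j(\Jac(X))$, and check this by the classical binary-quartic computation, $\Jac(X): Y^2=X^3-27IX-27J$, together with $1728=64\cdot 27$. But the claim is asserted for every field of characteristic different from $2$, and your verification breaks down in characteristic $3$, which is exactly the case the paper singles out and treats separately. In characteristic $3$ the model $Y^2=X^3-27IX-27J$ degenerates to the cuspidal cubic $Y^2=X^3$ (since $27=0$), so it is not a model of $\Jac(X)$ at all, and the formula $j=1728\cdot 4A^3/(4A^3+27B^2)$ you invoke is identically zero there ($1728\equiv 0$). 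Note also that the displayed $j$ only makes sense in characteristic $3$ because $(4I^3-J^2)/27$ is to be read as the discriminant of $\ell(x)$, i.e.\ the division by $27$ is performed at the level of polynomials with integer coefficients before reduction; your manipulation treating $27$ as an invertible scalar hides this point.

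The paper closes this gap by a lifting argument: when ${\rm char}(K)=3$, lift $y^2=\ell(x)$ to a smooth proper genus-one family $y^2=\tilde\ell(x)$ over the Witt ring $R=W(\overline{K})$, compute the $j$-invariant of the Jacobian of the generic fiber in characteristic $0$ by the classical formula, and observe that the $j$-invariant of the special fiber (good reduction) is its reduction mod the maximal ideal, which is precisely $64\cdot 4I^3/\bigl((4I^3-J^2)/27\bigr)$ evaluated at the reduced coefficients. Either add such a lifting step (or a direct characteristic-$3$ computation with a long Weierstrass model, where the $x^2$-coefficient cannot be removed), or restrict your claim to ${\rm char}(K)\neq 2,3$; as written, your proof does not establish the statement in the generality in which it is used.
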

Indeed, this is well known when ${\rm char}(K) \neq 2,3$:
 When $\deg(\ell(x))=3$, this $j$-invariant is the $j$-invariant of the elliptic curve
given by the Weierstrass equation $y^2= \ell(x)$. When  $\deg(\ell(x))=4$, the curve $X/K$ given by $y^2=\ell(x)$ has genus $1$, and its Jacobian $E/K$ has
$j$-invariant $j$. In this case, the elliptic curve $E/K$ is given by $y^2=x^3-27Ix-27J$ and a natural $K$-morphism $X \to E$ of degree $4$ is given explicitly in 
\cite{Jac} or \cite{CF}, Remarks, 3., page 681. Clearly, the curves $y^2=\ell_1(x)$ and $y^2=\ell_2(x)$ are not isomorphic over $K$ if their Jacobians are not isomorphic.

Assume now that ${\rm char}(K) =3$. Let $R=W(\overline{K})$ denote the Witt ring (a complete discrete local ring of characteristic $0$) associated with the algebraic closure of $K$. Let $(\pi)$ denote its maximal ideal.
Lift the curve $X/K$ given by the equation $y^2=\ell(x) $ to a curve $\tilde X/R$ given by an equation $y^2=\tilde\ell(x)$ with  $\tilde\ell(x) \in R[x]$. 
More precisely, we obtain a smooth proper morphism $\tilde X \to {\rm Spec } R$, and associated to this family of smooth proper curves of genus $1$ is 
its Jacobian fibration $\tilde J \to {\rm Spec } R$. The latter has a section and is a smooth proper family of elliptic curves.
Since $R$ has characteristic $0$, we can compute the $j$-invariant of the Jacobian of the generic fiber of $\tilde X$ (that is, of the generic fiber 
of $\tilde J \to {\rm Spec} R$) using the equation $y^2=\tilde\ell(x)$.
The reduction modulo $\pi$ of $j(\tilde X)$ is the $j$-invariant attached to $\ell(x)$, given by the formula $j:=64 \frac{4I^3}{(4I^3-J^2)/27}$, and this element of $K^*$
is the $j$-invariant of the Jacobian of $X$. 
\end{emp}

\medskip
\noindent 
{\it Proof of {\rm \ref{genus.small} (1)} when $g=1$, and {\rm \ref{lem.semistable} (1)} when $N =4$.} In both cases, since $V$ is irreducible, 
it suffices to show that the morphism $j:V \to {\mathbb A}^1_K$ is not constant. We leave this verification to the reader.
\end{section}
\begin{section}{In all characteristics} \label{char2}

Let $K$ be any field, and let $L/K$ be any finite separable extension of degree $d$. Let $g \geq 1$ be any integer.
We obtained in Theorem \ref{pro.boundgeneral} the existence of a hyperelliptic curve of genus $g$ with a new point over $L$ when $g \geq d/4$, 
under the additional condition that ${\rm char}(K) \neq 2$. We revisit this question in this section when $K$ is any infinite field 
of any characteristic, and obtain the existence of a hyperelliptic curve with a new point over $L$ mostly only when $g \geq d/2-3$.
The method of proof in the case where ${\rm char}(K) \neq 2$ consisted of exploiting the existence of the approximate square root of a polynomial,
while in this section, we only exploit the fact that a finite separable extension can be generated by an element with null trace.
We start with a straightforward lemma.

\begin{lemma} \label{thm.car2bound}
Let $K$ be any field. Let $L_1/K, \dots, L_t/K$ be  
separable extensions of degree $d_i$.
Set $d:=\sum_{i=1}^t d_i$. Let $g \geq \lfloor d/2\rfloor$ be an integer.
Then there exists a hyperelliptic curve $X/K$ of genus $g$ with a $K$-rational point and a new point in $X(L_i)$ for each $i=1,\dots, t$, 
\end{lemma}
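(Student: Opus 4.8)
The plan is to produce $X$ explicitly as a hyperelliptic curve with an equation $y^2 + \varepsilon y = f(x)$, where $\varepsilon := 0$ when $\mathrm{char}(K) \neq 2$ and $\varepsilon := 1$ when $\mathrm{char}(K) = 2$; the required new points will appear as the obvious solutions with second coordinate $0$. First I would fix, for each $i = 1, \dots, t$, a generator $\beta_i$ of $L_i/K$ (with $\beta_i = \beta_j$ whenever $L_i = L_j$), denoting by $m_i(x) \in K[x]$ its minimal polynomial, an irreducible separable polynomial of degree $d_i$, and set $M(x) := \mathrm{lcm}(m_1(x), \dots, m_t(x))$, a separable polynomial of degree at most $d$. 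The hypothesis $g \geq \lfloor d/2 \rfloor$ would be used only through its consequence $\deg M \leq d \leq 2g+1$. I would then choose a separable $f(x) \in K[x]$ of degree exactly $2g+1$ divisible by $M(x)$, for instance $f = M \cdot N$ with $N$ separable of degree $2g+1 - \deg M$ and prime to $M$; such an $N$ exists over any field. (One could also take each $\beta_i$ of null trace, as in the finer results of the section, but this refinement is not needed for the bound here.)

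Next I would set $X/K$ to be the smooth projective model of $y^2 + \varepsilon y = f(x)$ and verify its geometric properties. In either characteristic, projection onto the $x$-line is a $K$-morphism $\pi : X \to \mathbb{P}^1_K$ of degree $2$, so $X$ is a hyperelliptic curve in the sense of Convention \ref{conv}; it is geometrically connected because $f(x)$, of odd degree, is not of the form $u(x)^2 + \varepsilon u(x)$ for a polynomial $u$. Since $\deg f = 2g+1$ is odd, and in particular prime to $2$, the cover $\pi$ has a single point over $x = \infty$, which is $K$-rational, and a Riemann--Hurwitz count --- using, in characteristic $2$, that the different exponent at that point equals $2g+2$ --- shows $X$ has genus $g$. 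Thus $X/K$ has genus $g$ and carries a $K$-rational point.

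Finally, for each $i$ the divisibilities $m_i \mid M \mid f$ give $f(\beta_i) = 0$, so $(\beta_i, 0)$ solves $y^2 + \varepsilon y = f(x)$ and defines a closed point of $X$ whose residue field is $K(\beta_i) = L_i$; the image of this point under $\pi$ has residue field $L_i$ as well, so the point is special, and it is a new point over $L_i$ since $L_i = K(\beta_i)$. This gives the sought new point in $X(L_i)$ for every $i$, and the lemma follows.

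I do not anticipate a genuine obstacle --- this is the elementary warm-up of the section. The only steps requiring more than a one-line check are the genus computation for the Artin--Schreier model $y^2 + y = f(x)$ in characteristic $2$ (where, if needed, one first reduces to odd pole order at $x = \infty$ by an Artin--Schreier substitution) and, over very small finite fields, the existence of a separable $f$ of the prescribed degree divisible by $M$; in the latter case the assertion is in any event contained in \ref{thm.finite}.
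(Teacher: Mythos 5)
Your construction---multiplying the (lcm of the) minimal polynomials of generators $\beta_i$ of the $L_i$ by a coprime separable cofactor to reach odd degree $2g+1$, taking $y^2=f$ resp.\ $y^2+y=f$ according to the characteristic, and reading off the new points $(\beta_i,0)$ together with the $K$-rational point at infinity---is essentially identical to the paper's proof of Lemma \ref{thm.car2bound}, and it is correct. The only adjustment I would make is to drop the fallback appeal to \ref{thm.finite} for very small finite fields, since that proposition is itself proved using this lemma (so the appeal would be circular); it is also unnecessary, because in characteristic $2$ the Artin--Schreier model $y^2+y=f$ requires no separability of the cofactor at all, while for $q\ge 3$ a separable cofactor of the exact required degree coprime to $M$ always exists (take a linear factor avoiding the at most one $K$-root of $M$, or an irreducible polynomial of the required degree not dividing $M$).
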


\proof By removing some $L_i$'s if necessary we can suppose that the extensions are pairwise non-isomorphic. Choose an element $\alpha_i \in L_i$ with $L_i=K(\alpha_i)$. Let $m(x)=\prod_i m_i(x) \in K[x]$ denote the product of the minimal (monic) polynomial $m_i(x)$ of $\alpha_i$ over $K$.
 By hypothesis, $\deg m(x) = d$ and $m(x)$ is separable.  Consider the hyperelliptic curve $X/K$ given by the affine equation
$y^2=m(x)f(x)$ when ${\rm char}(K) \neq 2$, and $y^2 +y = m(x)f(x)$ when ${\rm char}(K) = 2$, 
where $f(x) \in K[x]$ is a separable polynomial coprime to $m(x)$ of the following degree  (we leave it to the reader to check that such a polynomial 
exists even when $K$ is finite):
when $d$ is even, we choose $\deg(f) \geq 1$ odd, so that the genus of $X/K$ is $g$ with $2g+1= d + \deg(f)$.
When $d$ is odd, we choose $\deg(f) \geq 0$ even, so that the genus of $X/K$ is $g$ with $2g+1= d + \deg(f)$.
The curve $X/K$ has the obvious new points $P_i=(\alpha_i,0)$ in $X(L_i)$, and one $K$-rational point at infinity. 
\qed

\begin{remark} When ${\rm char}(K)=2$ in \ref{thm.car2bound}, we can take  the $L_i/K$ to be only simple over $K$, and not necessarily separable.
\end{remark}

The above lemma is slightly improved in our next theorem. In view of Theorem \ref{pro.boundgeneral}, this is a useful improvement only 
in the case where ${\rm char}(K)=2$. 

\begin{theorem} \label{pro.boundgeneral2} Let $K$ be any infinite field. For each $i=1,\dots, t$, let $L_i/K$ be a  separable extension of degree $d_i$ (the extensions $L_i$ need not be distinct). Let $d:=\sum_{i=1}^t d_i $. Assume $d \geq 7$.  
Then there exist infinitely many 
hyperelliptic curves $X/K$ of genus $g=\lfloor (d-5)/2\rfloor$, pairwise non-isomorphic over $\overline{K}$, such that the following are true.
\begin{enumerate}[\rm (a)]
\item The curve $X$ contains distinct  new points $P_1\in X(L_1),
  \dots, P_t\in X(L_t)$. More precisely, let $Q_i$ denote the image of the point $P_i: \Spec L_i \to X$.
  Then $Q_1,\dots, Q_t$ are $t$ distinct closed   points of $X$.
\item The curve $X/K$ has at least two additional $K$-rational points distinct from $Q_1,\dots, Q_t$. 
When  $d $ is odd, then $X/K$ has at least three additional $K$-rational points distinct from $Q_1,\dots, Q_t$.
\end{enumerate}
\end{theorem}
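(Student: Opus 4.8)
The plan is to repeat the architecture of the proof of Theorem~\ref{pro.boundgeneral} given in~\ref{proof}--\ref{N-tors}, but with Lemma~\ref{lem.decomp} replaced by the weaker input advertised in the introduction to this section: every separable extension $L_i/K$ admits a generator $\alpha_i$ with $\mathrm{Tr}_{L_i/K}(\alpha_i)=0$. First I would fix such generators (taking $\alpha_i=\alpha_j$ whenever $L_i=L_j$), let $m_i(x)$ be the minimal polynomial of $\alpha_i$, and put $m(x):=\prod_{i=1}^t m_i(x)$, a monic separable polynomial of degree $d$. Because each $\alpha_i$ has zero trace, $m(x)$ has \emph{no term in degree $d-1$}; this single vanishing coefficient is the only arithmetic fact about $L_1,\dots,L_t$ that the argument will use, and it is precisely what lets one push the genus down from the value $\lfloor d/2\rfloor$ supplied by Lemma~\ref{thm.car2bound} to $\lfloor(d-5)/2\rfloor$.

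Second, I would construct a one-parameter family of hyperelliptic equations carrying a point over each $L_i$. In characteristic $2$ I would look for an Artin--Schreier model $y^2+h(x)y=f(x)$ together with a \emph{single} polynomial $B(x)\in K[x]$ satisfying $f(x)\equiv B(x)^2+h(x)B(x)\pmod{m(x)}$; then for each $i$ the pair $(\alpha_i,B(\alpha_i))$ is a point of the curve with residue field $K(\alpha_i)=L_i$, a new (indeed special) point $P_i\in X(L_i)$. Writing $f=B^2+hB+m\cdot u$, the game is to make $\deg h$ and $\deg f$ as small as possible: with $B$ monic of degree about $d/2$ and $u$ of small degree, the top coefficient of $B^2$ kills that of $mu$, and since $B^2$ is a square (hence has no coefficient in any ``odd'' slot below the top) while $m$ has no coefficient in degree $d-1$, the next coefficient cancels as well, with no hypothesis on $K$; a further reduction (absorbing a linear factor into $u$, or allowing $\deg h\ge 1$) gains the remaining room, and one checks that the smooth projective model then has genus exactly $\lfloor(d-5)/2\rfloor$. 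When $\mathrm{char}(K)\ne 2$ the same scheme runs with $y^2=\ell(x)$ and $\ell\equiv B^2\pmod m$, using Lemma~\ref{lem.decomp}; the vanishing subleading coefficient of $m$ again annihilates one coefficient of the remainder, so the construction is uniform in the characteristic (although there Theorem~\ref{pro.boundgeneral} already yields a sharper bound). The point(s) at infinity of the chosen model, together with the point above $x=0$, furnish the two extra $K$-rational points of~(b) — three when $d$ is odd, which I would obtain by arranging the parity of $\deg f$ (resp.\ $\deg \ell$) so that the fibre over $\infty$ splits into two $K$-rational points.

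Third, I would promote this to a genuine family: the coefficients of $B$ and $h$ (equivalently of $f$, resp.\ $\ell$) that remain free trace out an affine space, and I would restrict to the dense open subscheme on which $f$ (resp.\ $\ell$) is separable, the model is smooth of the predicted genus, the images $Q_i$ are pairwise distinct closed points, and none of the $\alpha_i$ lies in a proper subfield of $L_i$; the last two conditions are cut out by removing finitely many hyperplanes and $2$-diagonals exactly as in~\ref{composition}. Every $K$-point of this open set is then a curve satisfying (a) and (b). To obtain \emph{infinitely many} pairwise non-isomorphic such curves, I would invoke the non-constancy criterion~\ref{semistable}: it suffices to exhibit, along a one-parameter subfamily, a degeneration whose stable limit is singular. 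As in the proofs of Lemmas~\ref{genus.small} and~\ref{lem.semistable}, I would specialize the free parameter so that, after a suitable ramified base change, the defining polynomial factors in a way forcing the stable reduction to be a union of two curves of positive genus; this shows the induced map to $M_{g,K}$ (or to the $j$-line when $g=1$, cf.~\ref{orbit3}) is non-constant, and the argument of~\ref{orbits} then extracts an infinite subset $S$ of $K$-points yielding mutually non-isomorphic curves.

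The step I expect to be the real obstacle is the characteristic-$2$ bookkeeping in the second paragraph. With no approximate square root available, $y^2+h(x)y=f(x)$ is wildly ramified, so computing the genus of its smooth model — and checking that the model is smooth and that the family does not degenerate (i.e.\ that $f$ is not a $\wp$-coboundary) for a generic choice of parameters — requires the Riemann--Hurwitz analysis of Artin--Schreier double covers, and it is exactly there that one must verify that the single vanishing coefficient of $m(x)$ buys the drop to $\lfloor(d-5)/2\rfloor$ and nothing more. A secondary chore is producing the explicit singular stable degeneration demanded by~\ref{semistable} while staying inside the locus where the genericity conditions of the third step hold; since the families here are close relatives of those in Lemma~\ref{lem.semistable}, I would expect a similar two-chart computation to suffice.
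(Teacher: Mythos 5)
Your overall mechanism is the paper's: make the coefficient of $x^{d-1}$ in the product of the minimal polynomials vanish via a trace condition, then fold $m(x)$ (resp.\ $xm(x)$) into an equation $y^2+h(x)y=f(x)$ of the advertised genus carrying the points $(\beta_i,\beta_i^n)$. But two of your steps fail as stated. First, your arithmetic input ``every separable $L_i/K$ has a generator of trace zero'' is false in exactly the case this theorem is for: if $\mathrm{char}(K)=2$ and $[L_i:K]=2$, writing $L_i=K(\theta)$ with $\theta^2+\theta=c$ gives $\mathrm{Tr}_{L_i/K}(a+b\theta)=b$, so the trace-zero elements are precisely $K$ and none generates. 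What is actually needed, and what the paper imposes in \ref{trace0}, is only the vanishing of the \emph{total} trace $\sum_i \mathrm{Tr}_{L_i/K}(\beta_i)$, realized as a $K$-rational hyperplane in the parameter space $\prod_i\A^{d_i}_K$ of tuples $(\beta_1,\dots,\beta_t)$ (the preimage under $\mu\circ f$ of $\{a_{d-1}=0\}$), on which the dominance/genericity argument of \ref{composition}--\ref{orbits} is then run. Second, by freezing the generators with $\alpha_i=\alpha_j$ whenever $L_i=L_j$ you make $m(x)=\prod_i m_i(x)$ non-separable (contrary to your own assertion), and, more seriously, for $L_i=L_j$ your points $(\alpha_i,B(\alpha_i))$ and $(\alpha_j,B(\alpha_j))$ are the \emph{same} closed point, so the distinctness of $Q_1,\dots,Q_t$ in (a) fails whenever the extensions repeat; your later appeal to removing $2$-diagonals contradicts this freezing, since in your family only $B$ and $h$ vary. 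In the paper the generators themselves are the varying parameters, which is what makes the trace condition, the distinctness, and the reuse of the earlier machinery possible.

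Also, the step you single out as the real obstacle is where the paper is simplest: no search over $(B,h,u)$ and no Riemann--Hurwitz analysis of wild ramification is required. Taking $B=x^n$ and $u=\pm1$, one reads off $h(x)=a_{2n-2}x^{n-2}+\dots+a_n$ and $f(x)=-(a_{n-1}x^{n-1}+\dots+a_0)$ from the two halves of $m(x)$ (resp.\ of $xm(x)$ when $d$ is odd), i.e.\ equations \eqref{eq.ell2} and \eqref{eq.ell2odd}; the degree bounds $\deg h\le n-2$, $\deg f\le n-1$ are immediate, the generic genus is $\lfloor(d-5)/2\rfloor$, there are two rational points at infinity, and the third rational point for $d$ odd is $(0,0)$ (because $f(0)=0$), not a matter of parity at infinity. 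Smoothness is an open condition on the coefficients, shown nonempty by the explicit examples $y^2+x^{n-2}y=x+1$ and $y^2+x^{n-2}y=x$, so no ``further reduction to gain the remaining room'' is needed. Finally, if you keep $m$ fixed and vary only $B$, the non-constancy of that particular family in moduli is a new claim requiring its own proof (and note Lemmas \ref{genus.small} and \ref{lem.semistable}, which you propose to quote, are stated for $\mathrm{char}(K)\ne2$); the paper instead varies the $\beta_i$ on the trace hyperplane so that the argument of \ref{orbits} applies with only minor changes.
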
 
\proof
Let us explain the strategy of proof. 
Choose elements $\beta_i \in L_i$ with $L_i=K(\beta_i)$. Let $m(x):=\prod_i m_i(x) \in K[x]$ denote the product of the minimal (monic) polynomial $m_i(x)$ of $\beta_i$ over $K$. When $d=2n$, write $m(x) = x^d + a_{d-1} x^{d-1} + \dots + a_1 x + a_0$.  By construction, $a_0 \neq 0$. 
Our goal is to show the existence of such elements with $a_{d-1}=0$. 
Consider then 
the equation
 \begin{equation}  \label{eq.ell2} 
y^2+(a_{2n-2}x^{n-2}+ \dots + a_n)y=-a_{n-1}x^{n-1}-\dots -a_1 x-a_0.
\end{equation}
If its discriminant is non-zero, it defines a hyperelliptic curve of
genus $g=d/2-3$ with two distinct rational points at infinity ($n\ge
2$) and 
the new $L$-points $(\beta_i, \beta_i^n)$ for each $i=1,\dots,t$. 

 When $d=2n-1$ is odd, we proceed similarly and find elements $\beta_i$ such that we can write $xm(x) = x^{2n}+ a_{2n-2} x^{2n-2} + \dots + a_1 x$ with $a_1 \neq 0$. We then consider the equation 
  \begin{equation}  \label{eq.ell2odd} 
y^2+(a_{2n-2}x^{n-2}+ \dots + a_n)y=-a_{n-1}x^{n-1}-\dots - a_1 x.
\end{equation} 
If its discriminant is non-zero, it defines a hyperelliptic curve of
genus $g=(d+1)/2-3$ with two distinct rational points at infinity and 
a third rational point $(0,0)$. Moreover, $(\beta_i, \beta_i^n)$ for
each $i=1,\dots,t$, are new $L$-points on the curve. 

\begin{emp} \label{trace0}
 We start as in the  
proof of Theorem  \ref{pro.boundgeneral} in \ref{proof}.
For each $i=1,\dots, t$, fix a generator $\alpha_i \in L_i$, so that $L_i=K(\alpha_i)$. 
For each $i$, consider the morphism $f_{\alpha_i} :=\A^{d_i}_K \to \A^{d_i}_K$ defined in \eqref{mor}. 
Identify $\prod_{i=1}^t \A^{d_i}_K$ with $\A^{d}_K$, and consider 
$$ f:= f_{\alpha_1} \times \dots \times f_{\alpha_t}: \A^{d}_K \longrightarrow \A^{d}_K.$$
Using \ref{pro.finite}, we find that $f$ is surjective. 
Define an affine surjective morphism $$\mu: \A^{d}_K =\prod_{i=1}^{t} \A^{d_i}_K\longrightarrow \A^{d}_K$$
such that on $\overline{K}$-rational points, 
$$(a_0^{(1)},\dots, a^{(1)}_{d_1-1}, \dots,  a_0^{(t)},\ldots, a^{(t)}_{d_t-1})
\longmapsto (a_0, a_1,\dots,a_{d-1})$$ 
with the property that 
\begin{multline*} a_0+a_1x+ \dots+a_{d-1}x^{d-1}+x^d = \\ 
(a_0^{(1)}+a_1^{(1)}x + \dots + a^{(1)}_{d_1-1}x^{d_1-1}+x^{d_1}) \cdot \ldots  \cdot (a_0^{(t)}+ \dots +a^{(t)}_{d_t-1}x^{d_t-1} + x^{d_t}).
\end{multline*}

In the target space ${\mathbb A}_K^d$ with coordinates $(a_0, a_1,\dots,a_{d-1})$,
consider the hyperplane $H'$ given by $a_{d-1}=0$. Since each trace map ${\rm Tr}_{L_i/K}$ is $K$-linear,  the preimage of $H'$ 
under  $\mu \circ f$ is again a hyperplane defined over $K$ in the source space ${\mathbb A}_K^d$ (given in the appropriate coordinates by the equation 
$\sum_{i=1}^t {\rm Tr}_{L_i/K}=0$).

Consider the restriction $(\mu \circ f)_{\mid H}: H \to H'$. 
In $H'$, consider the open subspace $V$ 
where the discriminant of the equation  \eqref{eq.ell2} when 
$d$ is even and \eqref{eq.ell2odd} when $d$ is odd, is non-zero. 

It is easy to check that $V$ is non-empty 
(for instance in characteristic $2$, the equations $y^2+x^{n-2}y=x+1$ when $d$ is even and 
$y^2+x^{n-2}y=x$ when $d$ is odd have non-zero discriminants) and, hence, is
dense in $H'$. 
Consider the preimage of $V$ under the morphism $(\mu \circ f)_{\mid H}$. As in the proof
of   Theorem \ref{pro.boundgeneral}, we   argue when $K$ is infinite
that we can find infinitely many rational points in this preimage that correspond to $(\beta_1,
\dots, \beta_t) \in \prod_i L_i$
with $L_i=K(\beta_i)$ and such that the equations \eqref{eq.ell2}
or \eqref{eq.ell2odd} 
define smooth hyperelliptic curves. We leave the details of the proof to the reader. \qed
 \end{emp}

\smallskip
Our next theorem uses an idea already found in \cite{Roh}, and improves Theorem \ref{pro.boundgeneral2}
when $d=9$. Theorem \ref{thm.d=9} provides an alternate proof, in all characteristics, of Theorem \ref{pro.boundgeneral} when $d \leq 9$.
The proof of Part (b) is omitted below, as it is very similar to the proof of the analogous statement in  Theorem \ref{pro.boundgeneral}.

\begin{theorem} \label{thm.d=9} 
Keep the hypotheses of Theorem {\rm \ref{pro.boundgeneral2}} and suppose that
$d=9$. 
\begin{enumerate}[\rm (a)]
\item Then there exist infinitely many elliptic curves $X/K$, pairwise distinct over $\overline{K}$, such that
the statement {\rm (a)} in Theorem {\rm \ref{pro.boundgeneral2}} holds. 
 
\item Let $F$ be any finite extension of $K$ containing the $L_i$'s.  
Then the image of at least one of the $P_i$'s in  
$X(F)$ 
has either order larger than $N$ or is of infinite order.
\end{enumerate}
\end{theorem}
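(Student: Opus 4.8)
My plan is to refine the construction used in the proof of Theorem~\ref{pro.boundgeneral2}, following the idea of Rohrlich in \cite{Roh} (see also \cite{Mat}), so as to lower the genus from $\lfloor(d-5)/2\rfloor=2$ to $1$. Recall from \ref{trace0} that one chooses generators $\beta_i\in L_i$, forms the separable monic polynomial $m(x):=\prod_i m_{\beta_i}(x)$ of degree $9$, and uses the $K$-linearity of the trace to arrange that the coefficient of $x^{8}$ in $m(x)$ vanishes; substituting $y=x^{5}$ then exhibits $xm(x)=y^{2}+p(x)y+q(x)$ and produces the equation \eqref{eq.ell2odd} with $\deg p=3$, hence genus $2$. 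The extra ingredient is to \emph{also} annihilate the coefficient of $x^{7}$ in $m(x)$. Then $p(x)$ in \eqref{eq.ell2odd} has degree at most $2$, so the projective model $X$ of $y^{2}+p(x)y=-(a_{4}x^{4}+\dots +a_{1}x)$ has arithmetic genus $1$; on the dense open locus where $X$ is smooth (equivalently, where the associated quartic $p(x)^{2}-4q(x)$, or its Artin--Schreier analogue in characteristic $2$, is separable of degree at least $3$) the curve $X$ is an elliptic curve, since $(0,0)\in X(K)$, and it carries the new points $(\beta_{i},\beta_{i}^{5})\in X(L_{i})$.

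First I would set up, exactly as in \ref{trace0}, the surjective morphisms $f:=f_{\alpha_{1}}\times\dots\times f_{\alpha_{t}}$ (surjective by \ref{pro.finite}) and the coefficient-multiplication map $\mu$, so that $\mu\circ f$ carries a $K$-point $(\beta_{1},\dots,\beta_{t})\in\prod_{i}L_{i}$ to the coefficient vector of $m(x)=\prod_{i}m_{\beta_{i}}(x)$. In the target $\A^{9}_{K}$, the condition ``coefficient of $x^{8}$ is zero'' defines a hyperplane $H'$ and ``coefficient of $x^{7}$ is zero'' a hypersurface $Q'$ of degree $2$. Let $Z$ be the preimage of $H'\cap Q'$ under $\mu\circ f$ inside $\prod_{i}\A^{d_{i}}_{K}$. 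The plan is to show $Z$ is non-empty, of the expected dimension $7$, with $Z(K)$ Zariski-dense in a suitable $K$-component, and then to intersect with the dense open loci on which each $\beta_{i}$ generates $L_{i}$, the $2$-diagonals are avoided (so the closed points $Q_{i}$ are distinct), and $X$ is smooth of genus $1$; this leaves a dense open $\Omega\subseteq Z$ every $K$-point of which produces a curve satisfying (a). For the clause ``infinitely many, pairwise non-isomorphic over $\overline K$'', since $g=1$ it suffices that the $j$-invariant morphism $j\colon\Omega\to\A^{1}_{K}$, built as in \ref{orbit3}, be non-constant, which I would check by exhibiting a one-parameter subfamily with non-constant $j$ in the spirit of Lemmas~\ref{genus.small} and~\ref{lem.semistable}. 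Part~(b) is then obtained by the Weil-restriction argument of \ref{N-tors}: if the image of every $P_{i}$ in $\Res_{R/K}\Jac$ lay in the $N!$-torsion, one would force a quasi-finite map from $\Omega$ onto the parameter space of the curves $X$, contradicting that the former dominates the latter with positive-dimensional fibres; hence generically the image of at least one $P_{i}$ in $X(F)=\Jac(X)(F)$ has order larger than $N$ or infinite order.

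The hard part will be the density statement for $Z$ over an arbitrary infinite field. Over $\overline K$ it is automatic by the dimension count, but over $K$ the second condition is a quadric: in characteristic $\neq 2$ one computes that on the hyperplane $\{\sum_{i}{\rm Tr}_{L_{i}/K}(\beta_{i})=0\}$ it becomes $\sum_{i}{\rm Tr}_{L_{i}/K}(\beta_{i}^{2})=0$, a quadratic form which is an orthogonal sum of (possibly anisotropic) trace forms. I would therefore need either to exhibit an explicit smooth $K$-rational point on the corresponding quadric --- after which it is $K$-rational and its $K$-points are dense --- or to isolate the degenerate configurations where the form is anisotropic; note that when all $L_{i}=K$ the theorem is immediate, a ``new point over $K$'' being merely a $K$-rational point. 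A parallel but independent analysis is required in characteristic $2$, where the vanishing of the $x^{7}$-coefficient is a genuine quadratic relation among the $\beta_{i}$ not expressible through traces and one works directly with the models \eqref{eq.ell2}--\eqref{eq.ell2odd} as in \ref{trace0}, checking non-emptiness of the analogous locus by an explicit example. Everything else in~(a), and the verification of~(b), is routine along the lines of \ref{proof} and \ref{N-tors}.
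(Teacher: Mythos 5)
Your overall architecture (trace-type conditions on the coefficients of $m(x)=\prod_i m_{\beta_i}(x)$, density on a parameter space, the $j$-invariant argument for infinitude, and the Weil-restriction argument of \ref{N-tors} for part (b)) is sound, but the step you yourself flag as ``the hard part'' is a genuine gap, not a technicality, and it is exactly where your route diverges from the paper's. You need generators $\beta_i$ killing \emph{both} the $x^8$- and the $x^7$-coefficient of $m(x)$; in characteristic $\neq 2$, on the hyperplane $\sum_i \mathrm{Tr}_{L_i/K}(\beta_i)=0$ the second condition is $\sum_i \mathrm{Tr}_{L_i/K}(\beta_i^2)=0$. This quadric can fail to have any nonzero $K$-point at all: take $K=\mathbb{Q}$ and the $L_i$ totally real (for instance $t=1$ and $L=\mathbb{Q}(\zeta_{19})^{+}$, a degree $9$ case explicitly relevant to Section \ref{ex.cyc}); then $\mathrm{Tr}_{L_i/\mathbb{Q}}(\beta_i^2)=\sum_\sigma \sigma(\beta_i)^2>0$ for every $\beta_i\neq 0$, so the only rational solution is $\beta_i=0$ for all $i$, which generates nothing. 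Hence no choice of generators makes your genus-$1$ model exist in these cases, and ``isolating the degenerate configurations'' leaves precisely these configurations (which the theorem must cover) unproven. The characteristic-$2$ analogue, which you propose to settle by an example, has the same status: an example shows non-emptiness of a locus over one field, not $K$-rational solvability for arbitrary infinite $K$ and arbitrary $L_i$.

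The paper avoids the quadratic condition altogether. Following Rohrlich's idea, it imposes only the single linear condition $a_8=0$ (always achievable with generators, since the trace-zero locus is a hyperplane and $K$ is infinite), and then changes the model rather than the constraints: dividing $m(\beta)=0$ by $\beta^9$ and noting that every exponent in $\{0,2,3,\dots,9\}$ lies in the numerical semigroup generated by $2$ and $3$, one rewrites the relation as the plane cubic
\begin{equation*}
z^3 + a_7 x z^2 + a_6 y z^2 + a_5 x^2 z + a_4 x y z + a_3 y^2 z + a_2 x^2 y + a_1 x y^2 + a_0 y^3 = 0,
\end{equation*}
with the $K$-rational point $(1:0:0)$ and the new points $(\beta_i^{-2}:\beta_i^{-3}:1)\in X(L_i)$. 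Genus $1$ is obtained from the degree of the plane model, not from shortening the polynomial, so only the linear trace condition enters and the argument goes through uniformly in all characteristics. If you want to salvage your approach, you would have to replace the condition $a_7=0$ by this kind of change of model; as written, the proposal cannot prove the theorem for anisotropic trace-form cases such as totally real $L$ over $\mathbb{Q}$.
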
 
\proof
Let us explain the strategy of proof. 
Choose elements $\beta_i \in L_i^*$ with $L_i=K(\beta_i)$. Let
$m(x):=\prod_i m_i(x) \in K[x]$ denote the product of the minimal
(monic) polynomial $m_i(x)$ of $\beta_i$ over $K$. Write $m(x) = x^9 +
a_{8} x^{8} + \dots + a_1 x + a_0$.  
Then consider the cubic plane curve  $X/K$ given by the equation:
$$ z^3 +  a_7 xz^2+ a_6 yz^2+ a_5 x^2z + a_4 xyz + a_3 y^2z +a_2 x^2y + a_1xy^2+ a_0y^3.$$
This curve has the $K$-rational point $(1:0:0)$, and this point is
smooth if $a_2\neq 0$ or $a_5 \neq 0$. 
By construction, this cubic curve has the new point $(\beta_i^{-2}: \beta_i^{-3} :1) \in X(L_i)$ for all $i=1,\dots,t$ when $a_8=0$.
One can show, as we did in  \ref{trace0}, that it is possible to choose the
elements $\beta_i$ such that $a_8=0$ and such that the above plane curve is
smooth, so that it defines an elliptic curve over $K$. We leave the details of the proof to the reader. 
\qed

\begin{remark}
Given a separable extension $L/K$ of degree $d > 9$, we do not know whether there always exists
an elliptic curve $X/K$ with a new point over $L$. When $d=10$, Proposition \ref{pro.boundgeneral2} produces only a curve $X/K$ of genus $g=2$ 
with a new point over $L$.
\end{remark}
\end{section}

\begin{section}{Finite fields and large fields}
We consider in this section fields $K$
for which we can, given any finite separable extension $L/K$ and integer $g\geq 1$, prove the existence of a smooth projective geometrically connected curve $X/K$ of genus $g$ with a new point over $L$.
The case $L=K$ holds for any $K$ and any $g \geq 1$, as  can be easily seen by exhibiting appropriate hyperelliptic curves.

\begin{proposition} \label{thm.finite} Let $K$ be a finite field. Let $L/K$ be any finite extension. Let $g \geq 1$.
 Then there exists a smooth projective geometrically connected curve $X/K$ of genus $g$ with a new point over $L$.
\end{proposition}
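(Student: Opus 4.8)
The plan is to turn the statement into a count of closed points on a single curve. Since $K$ is finite, say $K=\F_q$, the extension $L/K$ is, up to $K$-isomorphism, the unique extension of degree $n:=[L:K]$, and its intermediate fields are exactly the $\F_{q^d}$ with $d\mid n$. Consequently a smooth projective geometrically connected curve $X/K$ has a new point over $L$ if and only if $X$ carries a closed point of degree exactly $n$, equivalently $X(\F_{q^n})\neq\bigcup_{d\mid n,\ d<n}X(\F_{q^d})$. I will split according to the two complementary ranges $n\le 2g+1$ and $n\ge 2g+2$.

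If $n\le 2g+1$, equivalently $g\ge\lfloor n/2\rfloor$, then Lemma \ref{thm.car2bound} applied with $t=1$ and $L_1=L$ directly produces, in every characteristic, a hyperelliptic curve $X/K$ of genus $g$ with a $K$-rational point and a new point over $L$; this is exactly the desired curve. (Note also that Lemma \ref{thm.car2bound} applied with $t=1$ and $L_1=K$ provides, for every $g\ge 1$, a genus-$g$ curve over $K$ with a $K$-rational point, which I will use in the other range.)

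If $n\ge 2g+2$, fix a genus-$g$ curve $X_0/K$ with a rational point, for instance the one just mentioned, and set $N_d:=\#X_0(\F_{q^d})$. The number of closed points of $X_0$ of degree exactly $n$ equals $\frac1n\sum_{d\mid n}\mu(n/d)N_d$, while the Weil bound gives $|N_d-(q^d+1)|\le 2g\,q^{d/2}$. Hence this number is at least
$$\frac1n\Big(q^n+1-2g\,q^{n/2}-\sum_{d\mid n,\ d<n}\big(q^d+1+2g\,q^{d/2}\big)\Big).$$
Every proper divisor of $n$ is at most $n/2$ and there are fewer than $n$ of them, so the bracketed quantity is positive once $q^{n/2}$ exceeds a bound depending only on $g$ and $n$; because $n\ge 2g+2$ forces $q^{n/2}\ge 2^{g+1}$ and $q^{n/2}\ge 2^{n/2}$, this holds for all $(q,g,n)$ in the range outside a finite, explicitly listable collection of triples (in which $q$ and $g$ are both bounded). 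For each of those finitely many exceptional triples I would exhibit by hand a genus-$g$ hyperelliptic curve over $\F_q$ with a closed point of degree $n$ — such low-genus curves over small finite fields with a prescribed degree of an additional point are readily written down.

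The step I expect to be the actual obstacle is the very end of the third paragraph: sharpening the elementary Weil estimate enough that the set of unresolved triples $(q,g,n)$ is genuinely finite, and then verifying each of those remaining small cases by producing an explicit curve (or by quoting known tables of curves over small finite fields). Everything else is routine: the reduction to counting closed points, the two appeals to Lemma \ref{thm.car2bound}, and the manipulation of the Weil bound.
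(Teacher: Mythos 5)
Your route is essentially the paper's: Lemma \ref{thm.car2bound} disposes of the range $g\ge\lfloor n/2\rfloor$ (equivalently $n\le 2g+1$), and in the complementary range one compares $\#X(\F_{q^n})$ with the points rational over proper subfields via the Weil bounds, leaving finitely many small cases. The paper packages the dichotomy slightly differently (it shows that whenever the Weil inequality fails one necessarily has $2g\ge d-1$, so the lemma applies), but the two ingredients are identical, so the question is only whether the part you explicitly leave undone is routine.

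It is not, and this is the genuine gap. First, you never make the estimate explicit, so the ``finite, explicitly listable collection of triples'' is never listed. Second, and more importantly, the exceptional set is nonempty and cannot be emptied by sharpening the inequality: for $(q,g,n)=(2,1,4)$ the Weil bounds give only $\#X(\F_{16})\ge 9$ and $\#X(\F_4)\le 9$, and in fact the curve $y^2+y=x^3$ over $\F_2$ has $\#X(\F_{16})=\#X(\F_4)=9$, hence no new point over $\F_{16}$ at all; so here one must actually construct a suitable curve, which the paper does with $y^2+y=x^3+x+1$ (new point $(\alpha,\alpha^2)$ with $\alpha$ a root of $x^4+x^3+x^2+x+1$). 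Your crude union bound is also only borderline elsewhere: at $(q,g,n)=(2,2,6)$ plain Weil gives $\#X(\F_{64})\ge 33$ against $\#X(\F_8)+\#X(\F_4)\le 20+13=33$, so strict inequality needs a further refinement (e.g.\ accounting for the doubly counted $\F_2$-points or using Serre's improvement of the Weil bound). In short, completing your proof requires carrying out the explicit estimate, enumerating the failures, and exhibiting a curve for each; that case analysis is precisely the content of the paper's proof, which after treating the small cases $q=2$, $d\le 6$ reduces to the single triple $q=2$, $d=4$, $g=1$ and settles it with the displayed elliptic curve.
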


\proof Let $K={\mathbb F}_q$ and let $L={\mathbb F}_{q^d}$.
As pointed out above, the case $d=1$ is easy. 
Assume now that $d=\ell^s$ is a positive power of a single prime $\ell$. 
Recall  the Weil bounds for a smooth projective geometrically
connected curve $X/{\mathbb F}_q$ and any $d\ge 1$:
$$q^d+1-2g\sqrt{q^d} \leq |X({\mathbb F}_{q^d})| \leq
q^d+1+2g\sqrt{q^d}.$$
If the inequality $$q^{d/\ell}+1+2g\sqrt{q^{d/\ell}} < q^d+1-2g\sqrt{q^d}$$
is satisfied, then every smooth projective geometrically connected curve of genus $g$ has a new point over ${\mathbb F}_{q^d}$.
Otherwise, we have 
$$2g \geq \frac{q^d -q^{d/\ell}}{\sqrt{q^d}+\sqrt{q^{d/\ell}}}.$$
Clearly, we have $ {\sqrt{q^d}-\sqrt{q^{d/\ell}}} \geq d-1$ (except 
when $q=2$ and $d \leq 4$), so that $2g \geq d-1$. We can then  apply \ref{thm.car2bound} to all cases except when $q=2$, $d=4$, and $g=1$,
to find that there exists a curve of genus $g \geq 1$ over  ${\mathbb F}_{q}$ with a new point over $L={\mathbb F}_{q^d}$.
In the case where $q=2$, $d=4$ and $g=1$, 
we consider the elliptic curve 
$y^2+y=x^3+x+1$. Let $\alpha \in {\mathbb F}_{16}$ be a root of 
$x^4+x^3+x^2+x+1\in \mathbb F_2[x]$ (which is irreducible). 
Then $(\alpha, \alpha^2)$ is a new point in $\mathbb F_{16}$. 

Assume now that $d$ is divisible by exactly $m>1$ distinct primes 
$\ell_1, \dots, \ell_m$. If the inequality
\begin{equation}
  \label{eq:toto}
  \sum_{i=1}^m \left(q^{d/\ell_i}+1+2g\sqrt{q^{d/\ell_i}}\right) <
  q^d+1-2g\sqrt{q^d}
\end{equation}
holds, then every smooth projective geometrically connected curve of genus $g$ has a new point over ${\mathbb F}_{q^d}$.
If the inequality \eqref{eq:toto} is not satisfied, then
\begin{multline*}
2g 
\geq \frac{q^d - \sum_{i=1}^m q^{d/\ell_i} - (m-1)}{\sqrt{q^d}+ \sum_{i=1}^m \sqrt{q^{d/\ell_i}}} 
\geq \frac{q^d - \sum_{i=1}^m q^{d/\ell_i} - (m-1)}{(m+1)q^{d/2}} 
\geq q^{d/2}/(m+1) -1.\\ 
\end{multline*} 
Except when $q=2$ and $d=6$, we find that  $q^{d/2}/(m+1) -1 \geq d-1$, so that $2g \geq d-1$.
When $q=2$ and $d=6$, a direct computation finds that $2g \geq 5.77$, so that again $2g\geq d-1$. We can apply \ref{thm.car2bound} again to conclude. 
\qed

\medskip
Recall that a field $K$ is called  {\it large} if every geometrically integral scheme of finite type $X/K$ with a $K$-rational smooth point 
is such that $X(K)$ is Zariski-dense in $X$.
 PAC fields, and 
fields of fractions of domains which are Henselian with respect to a nontrivial ideal,
are large 
(see \cite{PopH}, 1.1, and \cite{PopSurvey}, 1.A). 

\begin{proposition}  \label{thm.PAC} Let $K$ be a large field. 
Let $X/K$ be a geometrically integral scheme of finite type of positive
dimension with a smooth $K$-rational point. Let $L/K$ be a finite separable extension. 
Then $X/K$ has infinitely many new points over $L$.
\end{proposition}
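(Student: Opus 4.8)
The plan is to reduce to the defining property of large fields by means of Weil restriction of scalars. Observe first that any point $P\in X$ whose residue field $\kappa(P)$ is algebraic (hence finite) over $K$ is automatically closed in $X$, since for a scheme of finite type over a field one has $\dim\overline{\{P\}}={\rm trdeg}_K\kappa(P)=0$. So it suffices to exhibit infinitely many $K$-morphisms $x\colon\Spec L\to X$ whose image points are pairwise distinct and have residue field $K$-isomorphic to $L$. Shrinking $X$ to a principal affine open neighbourhood of the given smooth $K$-rational point $x_0$ contained in the smooth locus of $X$, I may and will assume in addition that $X$ is affine, smooth and geometrically integral over $K$, of dimension $\geq 1$, and still contains $x_0$; by the previous remark this does not change the problem.

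Next I would form the Weil restriction $W:=\Res_{L/K}(X_L)$ along the finite \'etale morphism $\Spec L\to\Spec K$, which exists because $X_L$ is affine (\cite{BLR}, 7.6). Using the base-change identification $W\times_K\overline K\cong\prod_{\sigma\colon L\hookrightarrow\overline K}X_{\overline K}$ (a $d$-fold fibre product, $d=[L:K]$, the product being over the $d$ embeddings since $L/K$ is separable), I would check that $W$ is smooth over $K$, geometrically integral, of dimension $d\cdot\dim X\geq 1$, and satisfies $W(K)=X_L(L)=X(L)$, with $x_0\in X(K)\subseteq X(L)$ giving a point $w_0\in W(K)$. Since $K$ is large and $W$ is a geometrically integral $K$-scheme of finite type with a smooth $K$-rational point, $W(K)$ is Zariski dense in $W$; as $K$ is infinite (large fields are infinite) and $\dim W\geq1$, the set $W(K)$ is infinite.

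Now I would isolate the non-new points. For each intermediate field $K\subseteq F\subsetneq L$, the identity $\Res_{L/K}=\Res_{F/K}\circ\Res_{L/F}$ together with the fact that the unit $X_F\to\Res_{L/F}(X_L)$ is a closed immersion (as $X_F$ is separated) and that $\Res_{F/K}$ preserves closed immersions shows that $W_F:=\Res_{F/K}(X_F)$ is a closed subscheme of $W$, recovering $X(F)\subseteq X(L)$ on $K$-points; moreover $\dim W_F=[F:K]\dim X<[L:K]\dim X=\dim W$ because $\dim X\geq1$, so $W_F$ is a proper closed subset of the irreducible scheme $W$. As $L/K$ is separable there are only finitely many such $F$, so $Z:=\bigcup_F W_F$ is a proper closed subset and $U:=W\setminus Z$ is a nonempty (hence infinite, irreducible of dimension $\geq 1$ over an infinite field) open subscheme. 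Since $W(K)$ is dense in $W$, the intersection $W(K)\cap U$ is dense in $U$, hence infinite. Finally, for $x\in W(K)\cap U$ regarded as a morphism $\Spec L\to X$, the image point $P$ has residue field an intermediate field $K\subseteq\kappa(P)\subseteq L$, and if $\kappa(P)\subsetneq L$ then $x$ factors through $X(\kappa(P))=W_{\kappa(P)}(K)\subseteq Z$, contradicting $x\in U$; thus $\kappa(P)\cong_K L$, so $P$ is a new point over $L$. Since at most $|\mathrm{Aut}_K(L)|\leq[L:K]$ of the morphisms $x$ can have the same image point, the infinitely many $x\in W(K)\cap U$ yield infinitely many distinct new points of $X$ over $L$.

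The routine but not quite formal part of the argument is the bookkeeping on Weil restrictions: that $\Res_{L/K}$ along a finite separable extension sends a smooth (resp.\ geometrically integral) affine scheme to a smooth (resp.\ geometrically integral) scheme, and that $W_F\hookrightarrow W$ is the expected closed immersion — all of which follow from the base-change description above and standard facts in \cite{BLR}, 7.6. The conceptual heart, and the step I would be most careful with, is the pair of observations that make the counting work: the strict dimension inequality $[F:K]\dim X<[L:K]\dim X$ (which genuinely needs $\dim X\geq1$), and the finiteness of the fibres of $x\mapsto\mathrm{image}(x)$.
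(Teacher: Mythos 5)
Your proof is correct and follows essentially the same route as the paper: reduce to a smooth affine geometrically integral $X$, form the Weil restriction $\Res_{L/K}(X_L)$, use largeness to get Zariski density of its $K$-points, and remove the closed subschemes $\Res_{F/K}(X_F)$ for the finitely many proper intermediate fields $F$. Your added details (the dimension count showing each $\Res_{F/K}(X_F)$ is a proper closed subset, and the bound $|\mathrm{Aut}_K(L)|$ ensuring distinct image points) are refinements of the same argument, not a different method.
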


\proof Since we assume that $X/K$ is geometrically integral of finite type with a smooth $K$-point, we can find in $X$ a geometrically integral smooth open affine subscheme of finite type which contains a smooth $K$-point (\cite{BLR}, 2.3/16). Thus, it suffices
to prove the proposition when $X$ is affine and smooth of finite type. 
Since we assume that $L/K$ is  separable, 
there exist only finitely many proper subfields $F$ of $L$ containing $K$. 
For each such subfield $F$, consider the Weil restriction $W_F:={\rm Res}_{F/K}(X_F)$ of $X_F/F$ from $F$ to $K$, which exists since $X/K$ is affine (\cite{BLR}, 7.6, Theorem 4). 
Then we have a natural closed $K$-immersion $W_F \to W_L$.
By construction, this immersion induces on $K$-points the natural inclusion $W_F(K)=X(F) \subset X(L)=W_L(K)$. 

Since $X_L/L$ is geometrically integral and smooth over $L$, we find that $W_L/K$ is geometrically integral over $K$ 
(see Lemma \ref{lem.geomintegral}) 
and smooth over $K$ (\cite{BLR}, 7.6, Proposition 5). 
Since $X(K) $ is not empty by hypothesis, and since we have a closed immersion $X \to W_L$, we find that $W_L$ has a (smooth) $K$-rational point. 
It follows from the fact that $K$ is large that $W_L(K)$ is dense in $W_L$. 

Consider the non-empty open subscheme $V/K$ of $W_L/K$ obtained by removing the finitely many closed subschemes $W_F$ from $W_L$, where $F$ runs through all extensions of $K$ in $L$, $F \neq L$. Since $W_L(K)$ is dense in $W_L$, we find that $V(K)$ is dense in $V$. 
In particular, there exist infinitely many $K$-points on $V/K$, and this infinite set of $K$-points on $V$ corresponds to an infinite set of new $L$-points in $X(L)$.
\qed

\smallskip As we noted at the beginning
of this section, there always exists a smooth projective  geometrically
connected curve over $K$ of any given genus $g \geq 1$  with a $K$-rational point, to which we can apply Proposition \ref{thm.PAC} when $K$ is large.
Proposition \ref{thm.PAC}, which can be applied when $K={\mathbb Q}_p$, indicates
that, given a finite extension $L/{\mathbb Q}$ and integer $g \geq 1$, there is no `local obstruction' to the existence of a smooth curve $X/{\mathbb Q}$
of genus $g$  with a ${\mathbb Q}$-rational point and with a new point over $L$. 

The following statement is probably well known, but we include a proof here because we did not find it proved in the literature.

\begin{lemma} \label{lem.geomintegral} 
Let $L/K$ be a finite separable extension. Assume that $Y/L$ is a geometrically integral quasi-projective variety $L$. 
Then ${\rm Res}_{L/K}(Y)/K$ is geometrically integral over $K$.
\end{lemma}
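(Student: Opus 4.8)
The plan is to reduce the statement to the behavior of Weil restriction under base change, and then to the well-known fact that Weil restriction along a finite separable extension commutes with passing to a separable (indeed, any) closure in a way that turns the extension into a split étale algebra. First I would recall that for a finite separable extension $L/K$ and a separable closure $K^s$ of $K$, the $K^s$-algebra $L \otimes_K K^s$ is isomorphic to $\prod_{\sigma} K^s$, where $\sigma$ ranges over the $d=[L:K]$ distinct $K$-embeddings $L \hookrightarrow K^s$. The key formal input is that Weil restriction is compatible with base change: for the quasi-projective variety $Y/L$ one has a canonical isomorphism
$$
\left({\rm Res}_{L/K}(Y)\right) \times_{\Spec K} \Spec K^s \;\cong\; {\rm Res}_{(L\otimes_K K^s)/K^s}\left(Y \times_{\Spec L} \Spec(L\otimes_K K^s)\right),
$$
and that Weil restriction along a finite product of rings decomposes as a fiber product of the Weil restrictions along each factor. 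Combining these, the left-hand side becomes $\prod_{\sigma} \left(Y_\sigma\right)$, the fiber product over $\Spec K^s$ of the $d$ base changes $Y_\sigma := Y \times_{L,\sigma} \Spec K^s$.

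Next I would use the hypothesis that $Y/L$ is geometrically integral. This means precisely that each $Y_\sigma$ is integral (irreducible and reduced) as a $K^s$-scheme; indeed geometric integrality of $Y/L$ says $Y \times_L \Spec \overline{L}$ is integral, and since $K^s \hookrightarrow \overline{L}$ via any chosen embedding, and geometric integrality is insensitive to further extension of an already separably (here algebraically) closed base, each $Y_\sigma$ is integral. It then remains to check that a finite fiber product $\prod_{\sigma} Y_\sigma$ over $\Spec K^s$ of integral quasi-projective schemes over the separably closed — in fact algebraically closed after one more harmless extension — field $K^s$ is again integral. Reducedness follows because a tensor product of two reduced algebras over a perfect field is reduced (apply this inductively, noting $K^s$ is perfect in characteristic $0$ and, in characteristic $p$, we may replace $K^s$ by $\overline{K}$ which is perfect; geometric integrality of the $Y_\sigma$ guarantees the intermediate products stay reduced). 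Irreducibility follows because a tensor product of two domains that are finitely generated over an algebraically closed field is again a domain — this is the standard fact that the product of two irreducible varieties over an algebraically closed field is irreducible. Hence $\prod_\sigma Y_\sigma$ is integral, so $\left({\rm Res}_{L/K}(Y)\right)\times_K K^s$ is integral, which is exactly the assertion that ${\rm Res}_{L/K}(Y)/K$ is geometrically integral.

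The main obstacle is purely bookkeeping rather than conceptual: one must be careful about which algebraic closure one tensors with and in what order, since geometric integrality is checked over $\overline{K}$ while the decomposition $L\otimes_K K^s \cong \prod_\sigma K^s$ is most transparent over the separable closure. The clean way around this is to do everything over $\overline{K}$ from the start: since $L/K$ is separable, $L\otimes_K \overline{K} \cong \prod_{\sigma=1}^d \overline{K}$ already, so $\left({\rm Res}_{L/K}(Y)\right)\times_K \overline{K} \cong \prod_{\sigma} \left(Y\times_{L,\sigma}\overline{K}\right)$, each factor is integral by the definition of geometric integrality of $Y/L$, and the product of finitely many integral quasi-projective $\overline{K}$-schemes is integral. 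One small point worth flagging explicitly in the write-up: irreducibility of the fiber product of integral $\overline{K}$-varieties genuinely uses that the base is algebraically closed (it fails over a general field), which is why reducing to $\overline{K}$ rather than stopping at $K^s$ is the right move; and one should cite a standard reference (e.g.\ \cite{BLR}, 7.6, for base-change compatibility of Weil restriction, together with the classical statement that a product of geometrically integral varieties is geometrically integral).
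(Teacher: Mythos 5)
Your proposal is correct and follows essentially the same route as the paper: base-change compatibility of Weil restriction, the decomposition of ${\rm Res}$ along the split algebra $L\otimes_K\overline{K}\cong\overline{K}^{[L:K]}$ (equivalently, along a disjoint union), and the fact that a finite fiber product of integral varieties over an algebraically closed field is integral. Your extra care about the embeddings $\sigma$ and about working over $\overline{K}$ rather than $K^s$ only makes explicit what the paper leaves implicit.
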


\proof Choose $\overline{K}$ an algebraic closure of $K$ with $K \subseteq L \subseteq \overline{K}$. 
Then there is a natural $\overline{K}$-isomorphism
$${\rm Res}_{L/K}(Y) \times_K \overline{K} \longrightarrow {\rm Res}_{L\otimes_K \overline{K}/\overline{K}}(Y\times_L (L\otimes_K \overline{K})).$$

Consider now a scheme $S$ and two $S$-schemes $S_1/S$ and $S_2/S$, with the induced morphism $S_1 \sqcup S_2 \to S$. Let $Z_1/S_1$ and $Z_2/S_2$ be two 
schemes. 
Then when all three Weil restrictions below exist, we  find a natural isomorphism of $S$-schemes
$${\rm Res}_{(S_1 \sqcup S_2)/S }(Z_1 \sqcup Z_2)  \longrightarrow {\rm Res}_{S_1/S}(Z_1) \times_S {\rm Res}_{S_2/S}(Z_2).$$
We use this isomorphism and the fact that $L/K$ is separable to find a natural  $\overline{K}$-isomorphism
$${\rm Res}_{L\otimes_K \overline{K}/\overline{K}}(Y\times_L (L\otimes_K \overline{K})) \longrightarrow (Y \times_L \overline{K})^{[L:K]}.$$
Since $Y \times_L \overline{K}$ is integral by hypothesis, so is $(Y \times_L \overline{K})^{[L:K]}$.
\qed

\end{section}

\end{document}